\newcommand{\PRLsep}{\noindent\makebox[\linewidth]{\resizebox{1.2\linewidth}{1pt}{$\bullet$}}\bigskip}
\DeclareMathOperator{\Tr}{Tr}
\date{\today}
\theoremstyle{plain}
\newtheorem{theorem}{Theorem}[section]
\newtheorem{lemma}[theorem]{Lemma}
\newtheorem{example}[]{Example}
\newtheorem{coro}[theorem]{Corollary}
\newtheorem{remark}[theorem]{Remark}
\newtheorem{definition}[theorem]{Definition}
\newcommand{\ve}{\varepsilon}
\newcommand{\BP}{\mathbf P} 
\newcommand{\BE}{\mathbf E}
\newcommand{\cK}{\mathcal K}
\newcommand{\dist}{dist}
\newcommand{\cM}{\mathcal M}
\newcommand{\cW}{\mathcal W}
\tikzstyle{vertex}=[circle,fill=black!25,minimum size=5pt,inner sep=0pt]
\tikzstyle{every node}=[circle, draw, fill=black!50,
\begin{document}

\title{Spectrum of complex networks}

\author[D. Montealegre]{Daniel Montealegre}
\address{Department of Mathematics, Yale University }
\email{daniel.montealegre@yale.edu}
\author[V. Vu]{Van Vu}
\address{Department of Mathematics, Yale University}
\email{van.vu@yale.edu}
\begin{abstract}
	The study of complex networks has been one of the most active fields in science in recent decades. Spectral properties of networks (or graphs that represent them) are of fundamental  importance. 
	Researchers have been investigating these properties for many years, and, based on numerical data, 
	have raised a number of questions about the distribution of the eigenvalues and eigenvectors. 
	
	In this paper, we give the solution to some of these questions. In particular, we 
	determine the limiting distribution of (the bulk of) the spectrum as the size of the network grows to infinity and  show that the leading eigenvectors are strongly localized.  
	
	We focus  on the preferential attachment graph, which is the most popular
	mathematical  model for growing complex networks. Our analysis is, on the other hand,  general and can be applied to other models. 
	
\end{abstract}

\maketitle 
\section{Introduction}
\noindent The study of complex networks has been an extremely active field in recent years (e.g., \cite{AJB}, \cite{A}, \cite{BAJ}, \cite{HH}, \cite{PPV}, \cite{PWKO}, \cite{XLC}). A   fundamental goal of this study is to build  and study  mathematical models of networks that occur in nature. 
\\
A popular model in the field is  the (random) preferential attachment model. In this model, the  network grows continuously. At time $t=1,2,3, \dots$, a  new vertex is born and attaches itself randomly to existing vertices, with respect to a distribution which favors vertices with higher degrees. This reflects the famous ``rich get richer" phenomenon observed in real-life situations. As pointed out by many researchers, this model of random graphs is significantly different from the standard Erd\H{o}s-R\'{e}nyi model where all vertices play the same role. There is vast literature on this model and  we refer the reader to the survey \cite{Bollobas} as a starting point; see also \cite{BR, AB, Tusnady, PRR} and the references therein. 
\\

For any network (in fact for any data set which can be represented in matrix form), its spectral information is of fundamental interest. Based on numerical experiments, researchers have observed the following about the eigenvalues and eigenvectors of  PA graphs:

{\bf Observation 1.} The bulk of the spectrum has triangular shape. This is very different from the classical semi-circle law by Wigner which holds for Erd\H{o}s-R\'{e}nyi graph (e.g., \cite{AB}, \cite{EG},  \cite{FDBV}, \cite{PJ}). The edge eigenvalues seem to follow a power law. See Figure \ref{hist}. 
\begin{figure}
	\begin{center}
		\includegraphics[scale = 0.25]{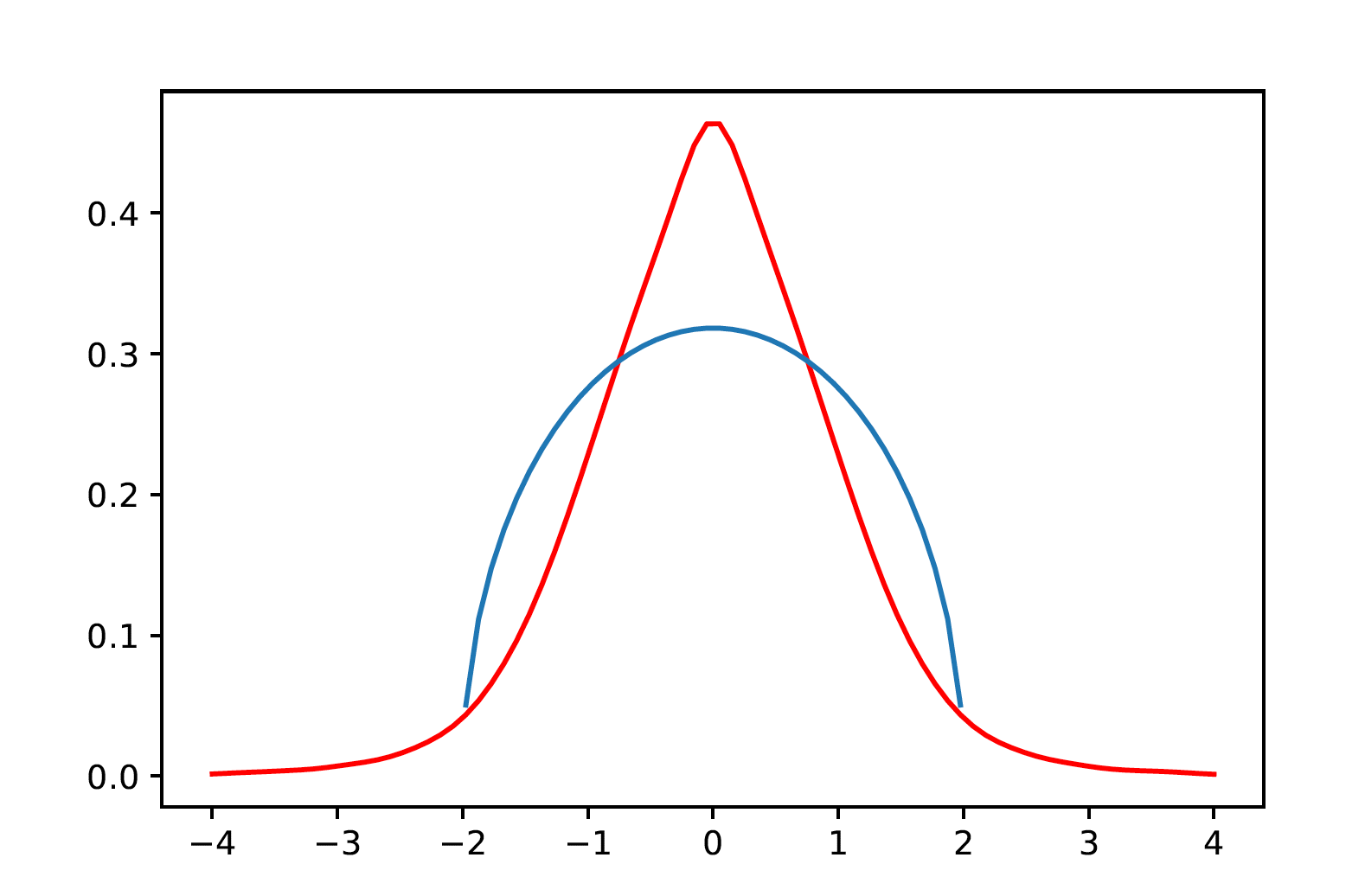}
	\end{center}
	\caption{In red: The spectral measure of a random PA graph with $n = 6000$, $m = 5$ with a normalization factor of $1/ \sqrt{n p(1-p)}$ where $p = 2m/n$. In blue: the semicircle distribution $(1/2\pi) \sqrt{4 - x^2}$ for $x\in [-2,2]$.}\label{hist}
\end{figure}

{\bf Observation 2.} The leading eigenvectors are localized, namely, there are a few  coordinates which capture a large amount of mass (e.g.,\cite{AB},  \cite{MZN}, \cite{FDBV}).

{\bf Heuristics 3.} The following heuristic has  been mentioned  in several papers \cite{AB, FDBV}: the power law of the edge eigenvalues is due to the localization of the corresponding eigenvectors.
\\

\vskip2mm
 These observations became well-known about twenty years ago. Among others, they have been discussed in 
Albert and Barabasi's widely circulated survey, which has been cited close to twenty thousand times; 
see Section 7 of \cite{AB}. However,  there have been  few rigorous  results.  The most relevant paper is  \cite{FFF}, which shows that for any fixed $k$, the first $k$ eigenvalues of 
the graph satisfy a power law.   More is known about 
static models where the number of vertices is fixed and the edges are drawn independently (but with different probablities). However, from the spectral point of view, these models seem to behave differently from the 
PA model; see \cite{CLV, CLV1} and the references there in. 

In \cite{Chayes}, it has been showed that the PA graph has a weak limit, which is a random tree. However, while the definition of the random tree is explicit, 
it seems difficult to compute the limiting spectral distribution of this tree or even its moments. In particular, it is 
already a highly non-trivial task  to prove that this limiting  distribution has a finite number of atoms; see \cite{Bon1, Sal} for more discussion. On the other hand, based
 on the numerical experiment, it is reasonable to conjecture that  there is no atom, but this is still open. \footnote {We  would like to thank J. Salez for pointing out these references.} 

 In this paper, we report  recent theoretical progress in the understanding of the bulk of the spectrum and the localization of leading eigenvectors. In particular, we
determine the moments of the limiting measure up  to any precision. Next, we prove 
the localization phenomenon in a precise form, determining the exact mass of the localized coordinate.
Our proofs also provide a satisfying explanation for {\bf Heuristics 3}.

\section{New results}

\subsection {The model} \label{model}

Let us first give  the precise description of a PA graph, following \cite{BR, Bollobas}. The key parameter here is a constant $m$, which is the number of links (edges) from a new vertex to the existing ones. Intuitively, one wants to link  the new vertex  to any  existing vertex $u$ (including itself) with probability proportional to the degree of $u$. The precise definition requires some care, since the degrees 
change after each link has been made.
\\
\\
\noindent First start with  $m=1$. The graph $G_{1,1}$  consists of $1$ loop around vertex $1$. Recursively define $G_{1,t}$ from $G_{1,t-1}$ as follows. Add a new  vertex $t$
(so our vertex set at time $t$ will be $[t]:=\{1,2, \dots, t\}$, and connect it to a random point $X_t \in [t]$ chosen with the following distribution
$$ 
\BP[X_t = i] = 
\begin{cases}
\frac{d(i, G_{1,t-1})}{2t - 1}  &\text{ if $1\leq i< t$}\\
\frac{1}{2t-1}  &\text{ if $i = t$}
\end{cases}
$$where $d(u, G)$ is the degree of $u$. Note that this definition allows for loops. 
\\
\\
\noindent Now we consider arbitrary $m$.  We construct $G_{m,t}$ by partitioning the vertices 
$G_{1,mt}$ into $t$ consecutive groups of size $m$ and viewing each group as a vertex of $G_{m,t} $.
Technically speaking, the  vertices $\{(a-1)m+1, (a-1)m+2,\ldots, am\}$ of $G_{1, mt}$ get collapsed into  vertex  $a$ of $G_{m,t}$; for $a=1,\dots, t$. This is equivalent to adding $m$ edges at each time step where those edges are added one by one, counting  the previous edges as well as the ``half" contribution to the degrees.  We denote by  $G_{m,t}$ the graph at time $t$. For more details about this model, see \cite{Bollobas}. The asymptotic notation such as $O,\Theta, o$ are used under the assumption that the size of the graph tends to infinity.

\subsection{The moments of the adjacency matrix}

As the size of the graph tends to infinity, the most natural question is to find the limit of its spectrum.
To be specific, at time $n$, the adjacency matrix of the graph has $n$ eigenvalues $\lambda_1(n), \dots, \lambda_n(n) $. We generate the spectral measure  $\mu_n$ on the real line by defining $\mu_n (I)= \frac{1}{n} |\{ i, \lambda_i (n) \in I \} |$ for any interval $I$. The question is to find the limit of $\mu_n$ (after a 
possible rescaling if necessary) as $n$ tends to infinity. This limit, if it exists, is refereed to as  the limiting spectral measure.
\\
\\
\noindent  A natural (and popular)
method to determine the spectral measure is to compute its moments. Wigner famously used 
this method to prove his classical semi-circle law \cite{Wigner}.   Our first result provides the asymptotics of the moments in question.

\begin{theorem}\label{moments thm intro}Let $G_{m,n}$ be a PA random graph, and  $\mu_{m,n}$ its spectral measure. Let $C_k$ denote the $k$-th spectral moment of $\mu_{m,n}$. Then
	\begin{align*} 
	C_0 &= 1 \\
	C_1 &= \Theta\left(\frac{\log n}{n}\right) \\ 
	C_2 &= (1+ o(1)) 2m \\
	C_3 &= \Theta\left(1/\sqrt{n}\right) \\ 
	C_4 &= (1+ o(1)) 2m(m+1) \log n.
	\end{align*} Furthermore, for  $k \geq 3$ 
	\begin{align}
	C_{2k}&= \Theta(n^{k/2 - 1})\\
	C_{2k-1} &= \Theta(n^{k/2 - 3/2}).
	\end{align}
\end{theorem}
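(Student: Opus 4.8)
The plan is to compute the spectral moments $C_k = \frac{1}{n}\Tr(A^k) = \frac{1}{n}\sum_{v_0,\dots,v_{k-1}} A_{v_0v_1}A_{v_1v_2}\cdots A_{v_{k-1}v_0}$ by classifying the closed walks of length $k$ in $G_{m,n}$ according to the structure of the sub(multi)graph they traverse. First I would record the basic degree statistics of the PA model: with high probability the maximum degree is $\Theta(\sqrt n)$ (up to logs), the number of vertices of degree $\geq d$ is $\Theta(n/d^2)$, and, crucially, for two vertices $i<j$ the expected contribution to $\mathbf E\,A_{ij}^2$ and to products along a walk can be estimated using the edge-probability formula from Section \ref{model}. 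The dominant contribution to $\Tr(A^k)$ will come from walks that concentrate as much "multiplicity" as possible on a few high-degree vertices, since a vertex of degree $D\sim\sqrt n$ contributes like $D$ to each traversal of an incident edge. Concretely, for $C_{2k}$ the extremal configuration is a "star-like" walk: a central high-degree hub of degree $\approx n^{1/2}$ together with $k-1$ leaves, traversed so that the $2k$ steps pick up a factor $\approx (n^{1/2})^{2(k-1)} \cdot (\text{number of such hubs})$; summing $(\text{deg})^{2k-2}$ over all vertices against the degree distribution $\sim n/d^2\,dd$ gives $\int^{n^{1/2}} d^{2k-2}\cdot d^{-2}\,n\,dd \sim n \cdot n^{(2k-2)/2} = n^{k}$, and dividing by $n$ yields $\Theta(n^{k-1})$ — wait, this must be reconciled with the claimed $\Theta(n^{k/2-1})$, which indicates the relevant walks instead use a single hub of degree $\Theta(\sqrt n)$ traversed only twice along with a path/tree structure, so the correct book-keeping is: sum of $(\text{deg})^{2}$ over all vertices is $\Theta(n\log n)$ for the hub, times the number of length-$(2k-2)$ closed walk skeletons attached, giving the stated order after the $1/n$ normalization. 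I would carry out this optimization carefully to pin down which tree/cycle skeleton and which degree profile is extremal.

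The small cases $C_0,\dots,C_4$ require exact leading constants rather than just orders. $C_0=1$ is immediate; $C_1 = \frac1n\Tr A = \frac1n(\text{number of loops})$, and since each of the $n$ time-steps creates a loop with probability $\Theta(1/t)$, $\mathbf E\sum_t \mathbf 1[\text{loop at }t] = \Theta(\log n)$, giving $C_1 = \Theta(\log n/n)$. For $C_2 = \frac1n\Tr A^2 = \frac1n\sum_{i,j}A_{ij}^2 = \frac1n(2\cdot\#\text{edges} + O(\text{loops,multiedges}))$; since $G_{m,n}$ has exactly $mn$ edges, $C_2 = 2m + o(1)$. For $C_4 = \frac1n\sum$ over closed 4-walks: the main term comes from walks $i\to j\to i\to j\to i$ and $i\to j\to i\to k\to i$, i.e. $\frac1n\sum_{i\ne j}A_{ij}^2 \cdot d(i)$-type sums; the $\log n$ and the constant $2m(m+1)$ emerge from $\frac1n\sum_i d(i)^2$-style quantities, using that $\mathbf E\sum_i d(i)^2 = \Theta(n\log n)$ with the precise constant coming from the recursive variance computation for PA degrees (a vertex born at time $t$ has degree concentrated around $m\sqrt{n/t}$, and $\sum_t (n/t) = n\log n$; the factor $2m(m+1)$ comes from including multi-edge corrections and the loop at the new vertex). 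The odd moments $C_3, C_{2k-1}$ are smaller because a closed odd walk cannot be supported on a single edge doubled up; it needs at least a triangle or an extra excursion, costing a factor $\approx n^{-1/2}$ relative to the even case — $C_3 = \Theta(1/\sqrt n)$ because the number of triangles is $\Theta(\sqrt n)$ (again from the degree distribution: a triangle needs a common high-degree neighbor) and $C_{2k-1}$ loses the same $n^{-1/2}$ factor off $C_{2k}$.

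The technical backbone throughout is a two-part argument for each moment: an expectation bound and a concentration bound. For the expectation, I would expand $\mathbf E\,\Tr(A^k)$ as a sum over walk-skeletons $H$ (isomorphism classes of closed walks with $\le k$ distinct vertices and edges) and estimate $\mathbf E\prod_{\text{edges of }H} A_{ij}^{\mu_{ij}}$ using the PA edge-probability bounds, summing the resulting products over all vertex-labelings consistent with the time order; the degree-distribution estimate $\#\{v : d(v)\ge d\} = \Theta(n/d^2)$ (which I would prove via a martingale/Azuma argument on the degree process, as is standard for PA) converts each such sum into a power of $n$. For concentration, I would use the edge-exposure martingale for $\Tr(A^k)$ — adding vertex $t$ changes $\Tr(A^k)$ by at most $O(\text{poly}(d(t),k))$ — together with the degree tail bound to control the increments, yielding concentration around the expectation up to the claimed error terms. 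The main obstacle, and where most of the work lies, is the extremal combinatorial optimization: identifying, for each $k$, precisely which walk-skeleton and which degree profile of its vertices maximizes the expected weighted walk count, and verifying that all other skeletons are lower-order. This requires a clean lemma bounding $\mathbf E\big[\prod A_{ij}^{\mu_{ij}}\big]$ in terms of the multiplicities $\mu_{ij}$ and the birth-times, so that the optimization reduces to a transparent exponent-counting problem; getting the exact constants in $C_2$ and $C_4$ (as opposed to just the order) additionally demands an exact, not merely asymptotic, handle on $\sum_i d(i)^2$ and the multi-edge/loop corrections, which is the most delicate part of the constant-level analysis.
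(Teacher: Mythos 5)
Your general strategy (expand $\frac1n\Tr(A^k)$ over closed-walk skeletons and estimate each skeleton's expected count via the PA edge-probability formula) is indeed the paper's approach, but the proposal has genuine gaps at exactly the points that carry the theorem. First, the extremal-skeleton optimization for the even moments is left unresolved, and the heuristic you run is wrong: a closed $2k$-walk on a star of degree $D$ contributes on the order of $D^{k}$ (one choice of leaf per excursion), not $D^{2k-2}$, and the degree density is $\propto n\,d^{-3}$, not $n\,d^{-2}$; your attempted patch (``hub traversed only twice, times $\sum_v d(v)^2=\Theta(n\log n)$'') does not describe the true dominant configuration either. The correct picture, which the paper extracts cleanly from the Bollob\'as--Riordan formula, is the bound $\mathbf{E}[X_{m,n}(H)]=O\bigl(n^{f(H)/2}\log^{g(H)}n\bigr)$ with $f(H)$ the number of degree-one vertices of the \emph{ordered} skeleton $H$; a closed walk of length $2k$ forces $f(H)\le k$, with equality only for the $k$-edge star, and the star whose hub is the earliest vertex is shown to achieve $\Theta(n^{k/2})$ (each leaf sums to $\Theta(\sqrt n)$, the hub's sum $\sum_a a^{-k/2}$ converges for $k\ge 3$). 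Without this, ``which skeleton and degree profile is extremal'' is precisely the content of the claim, so the proposal does not yet prove it.

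Second, your account of the odd moments fails: you attribute $C_3=\Theta(1/\sqrt n)$ to triangles, but the expected number of triangles in the PA graph is only polylogarithmic (the probability of a triangle on $a<b<c$ is of order $1/(abc)$). The $\Theta(\sqrt n)$ contribution to $\Tr(A^3)$, and more generally the matching lower bound for $C_{2k-1}$, comes from \emph{loops}: the extremal skeleton is a star with a loop at the early hub, giving $f(H)\le k-1$ with equality only there. A loopless accounting would give a much smaller order. Third, the exact constant in $C_4$ cannot be obtained by treating edges as independent plus ``multi-edge/loop corrections'': the factor $m(m+1)$ (rather than $m^2$) arises from the positive correlation of the two edges of a cherry sharing their early endpoint, i.e.\ the in-degree factorial factor in the probability formula -- the paper points out explicitly that the independent-edge heuristic gets this constant wrong. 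Finally, the concentration/martingale layer you propose is not needed for this statement (the moments here are expectations), and controlling the increments of $\Tr(A^k)$ under vertex exposure would itself be problematic since a single early vertex can shift the trace by a polynomial in $n$.
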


\noindent Despite this asymptotic result, one cannot use the moment method as in Wigner's case. The sequence $\{C_k\}$ 
simply {\it does not}  determine a distribution.  Since $C_2$ is  a constant (as $m$ is a constant), one does not need to do any rescaling. But without a rescaling, the higher 
moments tend to infinity.
\\
\\
\noindent The reason behind this surprising  fact is that the spectrum of $G_{m,n}$ consists of  two separate parts. The first part is formed by the edge eigenvalues and tends to follow a power law. The bulk of the spectrum (the triangle shape part) follows a different  law. The moments are dominated 
by the edge eigenvalues (which are significantly larger than the rest)  and thus reveal no information about the distribution of the eigenvalues in the bulk.  

\subsection{Approximate measures} 
Clearly, one needs a new idea to carry out further studies (to get information about the bulk of the spectrum, in particular).  The idea we propose here is to study approximations of the spectral measure, rather than the 
measure itself. 
\\
\\
\noindent To this end, we define the distance between two probability measure $\mu$ and $\eta$ on the real line as 
$$\dist (\mu, \eta) = \sup _I | \mu (I) -\eta (I) | , $$ 
where $I$ runs over the set of all  intervals. 
\\
\\
\noindent For any given precision $\ve >0$, we can define a sequence of (random)
measures $\mu_{\ve, m,n}  $ and a deterministic measure $\mu_{\ve, m,\infty} $ such that with probability one, $\dist (\mu_{m,n}, \mu_{\ve,m,n}  ) \le \ve $ and $ \mu_{\ve, m,n}  \rightarrow \mu_{\ve, m,\infty}. $ 
We define the approximation $\mu_{\ve,m,n}$ as the  spectral measure of  $G_{\ve,m,n}$, where 
$G_{\ve,m,n}$  is  the graph obtained by discarding the first $\ve n $ vertices of $G_{m,n}$ (we will refer to $G_{\ve,m,n}$ as the truncated graph). We prove the following

\begin{theorem} \label{theorem:limit} With probability one, 
	
	\begin{equation} \label{distance1} \dist (\mu_{m,n},  \mu_{\ve,m,n}) \le \ve. \end{equation}
	
	\noindent There  is a deterministic measure  $\mu_{\ve,m,\infty}$ (uniquely defined by 
	$\ve$ and $m$) such that  $\{\mu_{\ve,m,n}\}$ converges weakly in probability to $\mu_{\ve,m,\infty}$. 
	The moments of this limit can be computed explicitly for any given $\ve$. 
\end{theorem}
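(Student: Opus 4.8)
\emph{Plan of proof.} The three assertions are handled separately. The estimate \eqref{distance1} is a deterministic consequence of Cauchy interlacing; the existence of the limiting measure $\mu_{\ve,m,\infty}$ will come from local weak convergence of the truncated graph, which is \emph{sparse} because the truncation deletes precisely the high-degree hubs; and the computability of the moments will follow by counting closed walks in the limiting tree.

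Start with \eqref{distance1}. After relabelling its vertices, the adjacency matrix $A_\ve$ of $G_{\ve,m,n}$ is exactly the principal submatrix of the adjacency matrix $A$ of $G_{m,n}$ obtained by deleting the $\ve n$ rows and columns indexed by $\{1,\dots,\ve n\}$. Applying Cauchy's interlacing theorem $\ve n$ times gives $\lambda_i(A)\ge\lambda_i(A_\ve)\ge\lambda_{i+\ve n}(A)$ for all $i$, and a routine consequence is that for \emph{every} interval $I$ one has $\bigl|\,\#\{i:\lambda_i(A)\in I\}-\#\{i:\lambda_i(A_\ve)\in I\}\,\bigr|\le\ve n$. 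Dividing by $n$ — so that $\mu_{\ve,m,n}$ places mass $1/n$ at each of the $(1-\ve)n$ eigenvalues of $A_\ve$ and is a sub-probability measure of total mass $1-\ve$ — and taking the supremum over $I$ yields $\dist(\mu_{m,n},\mu_{\ve,m,n})\le\ve$. Since this holds for every realisation, it holds with probability one.

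Next I would set up the sparse-graph picture. The truncation removes the hubs: by the classical degree evolution of the preferential attachment process, $\mathbf E\,d(t,G_{m,n})=\Theta(m\sqrt{n/t})$, so every surviving vertex $t>\ve n$ has expected degree $O(m/\sqrt\ve)$; moreover the average degree of $G_{\ve,m,n}$ is $\tfrac1n\Tr(A_\ve^2)\le\tfrac1n\Tr(A^2)=C_2=(1+o(1))2m$ by Theorem~\ref{moments thm intro}, and $\tfrac1n\sum_{t>\ve n}\mathbf E\,d(t,G_{m,n})^2=\Theta\!\bigl(m^2\log(1/\ve)\bigr)$ is a finite constant once $m$ and $\ve$ are fixed — in contrast to the order-$\log n$ value of the same sum over all vertices, which is the analytic reason the untruncated moments $C_{2k}$ diverge. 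In particular the degree of a uniformly random vertex of $G_{\ve,m,n}$ is uniformly integrable. I would then prove that $G_{\ve,m,n}$, rooted at a uniformly random vertex, converges in the Benjamini--Schramm (local weak) sense to an explicit deterministic random rooted tree $T_{\ve,m}$: this is the tree obtained from the local weak limit of $G_{m,n}$ established in \cite{Chayes} — the P\'olya-point tree, whose vertices carry their rescaled birth-times in $(0,1]$ — by deleting every vertex of birth-time $<\ve$ together with its incident edges and re-normalising the root's birth-time to be uniform on $[\ve,1]$. Because the P\'olya-point tree almost surely has no vertex of birth-time exactly $\ve$, this truncation is an almost surely continuous functional of the rooted graph, so the convergence of \cite{Chayes} transfers to $G_{\ve,m,n}$.

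With these two inputs in hand I would invoke the general principle that, for graph sequences of uniformly integrable degree, Benjamini--Schramm convergence forces the empirical spectral measures to converge (weakly, in probability) to the \emph{expected spectral measure at the root} of the limiting rooted graph; applied here this gives $\mu_{\ve,m,n}\to\mu_{\ve,m,\infty}$, where $\mu_{\ve,m,\infty}$ is the expected spectral measure at the root of $T_{\ve,m}$, a deterministic measure depending only on $m$ and $\ve$. Since every birth-time in $T_{\ve,m}$ is at least $\ve$, its degrees are stochastically dominated by a single light-tailed law, which makes the limiting adjacency operator essentially self-adjoint and hence $\mu_{\ve,m,\infty}$ unambiguously defined. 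Finally, the $k$-th moment of $\mu_{\ve,m,\infty}$ equals the expected number of closed length-$k$ walks based at the root of $T_{\ve,m}$, and such a walk explores only a bounded neighbourhood, so this expectation is a finite sum — over the finitely many combinatorial shapes of such walks — of explicit integrals of products of the limiting connection intensities $\tfrac m2(ab)^{-1/2}$ (for ages $a<b$) over birth-times in $[\ve,1]$; each such integral is elementary and finite, the lower cut-off $\ve$ being exactly what removes the divergences at the origin. (Equivalently, one may compute $\lim_n\mathbf E\,\tfrac1n\Tr(A_\ve^k)$ directly in the finite model by the same walk count, with a variance estimate for the convergence in probability.) The main obstacle, I expect, is the third paragraph: transferring the local weak convergence of \cite{Chayes} through the macroscopic truncation, verifying the almost-sure continuity of the truncation functional and the essential self-adjointness of the limit; the interlacing bound, the degree estimates and the walk count should be routine given Theorem~\ref{moments thm intro} and the standard degree evolution.
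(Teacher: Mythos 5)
Your treatment of \eqref{distance1} is the same as the paper's: Cauchy interlacing (Theorem \ref{interlacing}) applied $\ve n$ times. One small consistency point: the paper normalises $\mu_{\ve,m,n}$ as a probability measure on $(1-\ve)n$ atoms (see the $\tfrac{1}{n-\ve n}$ in the proof of Theorem \ref{main1} and the $\tfrac{1}{1-\ve}$ prefactor in $C(2k,\ve,m)$), whereas your $1/n$-normalisation, which is what makes the bound $\ve$ exact, produces a sub-probability measure of mass $1-\ve$; either convention is workable, but then the weak limit statement must be made for the $\tfrac{1}{1-\ve}$-rescaled measure.

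For the main part your route is genuinely different from the paper's. The paper never passes through the local weak limit: it \emph{defines} $\mu_{\ve,m,\infty}$ by the moment sequence $C(k,\ve,m)$, proves by hand that this sequence is solvable and M-determinate (Lemma \ref{unique}, via Carleman's condition, the walk-count identity \eqref{walk count} and Moon's formula), computes $\lim_n\BE[\bar\mu_{\ve,m,n}^k]$ directly in the finite model through the Bollob\'{a}s--Riordan edge-probability formula (Theorem \ref{bt}, packaged as Lemma \ref{to-use}), and upgrades to convergence in probability by a second-moment argument based on negative correlation of disjoint subgraphs (Lemma \ref{neg corr}, Lemma \ref{main2}). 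You instead construct the limit as the expected spectral measure at the root of the $\ve$-truncated P\'olya-point tree and invoke the continuity of ESDs under Benjamini--Schramm convergence. This is a legitimate alternative and buys something real: the limit is defined spectrally, so the determinacy question that the paper must settle via Carleman simply disappears, and the moment formula then falls out as an expected closed-walk count at the root, reproducing the weights $\varphi(T,m)\psi(D(T),\ve)$. But the three steps you delegate to the literature are exactly where the work lies, and none is off-the-shelf as cited: (i) \cite{Chayes} gives convergence of the law of the neighbourhood of a uniform vertex for their PA variants, while you need the age-marked strengthening (so that deleting all vertices of rescaled birth-time $<\ve$ is an a.s.\ continuous functional of the rooted marked graph) and you need it for the Bollob\'{a}s--Riordan model with loops used here; (ii) the spectral continuity theorem for sparse limits with unbounded degrees requires (essential) self-adjointness of the limiting adjacency operator, and your stochastic-domination remark is the right idea but is an argument to be written out, not a citation; (iii) the local-limit result controls expectations, so weak convergence \emph{in probability} of the random measures still needs a concentration estimate --- your parenthetical ``variance estimate'' is, in effect, the paper's Lemma \ref{main2}, at which point the two proofs coincide. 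In short: same interlacing step, correct overall strategy, but a more machinery-dependent construction of the limit; the paper's moment-method proof is self-contained and produces the explicit constants, at the price of having to prove M-determinacy.
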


\noindent We designed the approximation scheme  to 
avoid the troublesome large eigenvalues.
The large eigenvalues in  the spectrum are determined by 
the largest degrees in the graph, which, naturally, come from the vertices  born earlier in the process.  We eliminate exactly these vertices. 
\\
\\
\noindent The estimate \eqref{distance1} is an immediate consequence of the interlacing law. The heart of the theorem is the existence and uniqueness of the limiting measure and the computation of its moments, which we now turn to.

\subsection{ The approximate limit and its moments} \label{the approximate limit}

We can  write down the approximate limit   $\{\mu_{\ve,m,\infty}\}$ through its moments. This distribution is symmetric around $0$, so all odd moments are zero. For each even number $k$, the $k$th moment is a constant 
$C(k, \ve, m)$, which depends only on  $k, \ve$ and $m$. To give an explicit expression for this quantity, we need some preparation. 
\\
\\
\noindent For the next discussion,  it will be convenient to think of the edges in the graph as directed, with the direction going from the larger end to the smaller end. 
(e.g., if $\{1,3\} $ is an edge, then the direction goes from $3$ to $1$.)  Let $G$ be a graph on $V(G)$, we denote the in-degree of a vertex $u$  by $d_{in}(u, G)$, and the out degree by $d_{out}(u, G)$. The set of vertices $u\in V(G)$ such that $d_{in}(u, G) >0$ is denoted by $V^{-}(G)$ (the in-vertices of $G$). Similarly, we define $V^{+}(G)$ as the out-vertices of $G$. Henceforth, when we write an edge as $(i,j)$ we tacitly imply $i\leq j$.

\begin{definition}Denote by $\cK_{m,n}$ the multigraph on vertex set $[n]$ where between any $i\neq j$ there are $m^2$ parallel edges, and each vertex has  $m(m+1)/2$ loops. 
\end{definition}

\noindent An essential technicality with the PA model is the order in which the  vertices appear. 

\begin{definition}\label{ordered}Define an ordered graph on $t$ vertices, to be a graph $H$ on vertex set $\{v_1,\ldots, v_t\}$ where we impose the ordering $v_i < v_{i+1}$. Note that we do not make the vertex set the integers since we want to emphasize this graph is not any specific subgraph of $\cK_{m,n}$. A subgraph of $\cK_{m,n}$ is said to be isomorphic to $H$ if it is isomorphic in the classical sense and  the mapping preserves the ordering. 
\end{definition}

\begin{definition} \label{walk1} Let $G$ be a multigraph. A walk of length $k$ is a sequence: $(v_1, e_1, v_2, \ldots, e_k, v_{k+1})$ where $v_i$ are vertices of $G$, $e_i$ are edges of $G$, and $e_i$ is incident to $v_i$ and $v_{i+1}$. If $v_{k+1} = v_1$, we call it a closed walk of length $k$. Let $\mathcal W_k(G)$ denote the set of all closed walks in $G$ of length $k$. 
\end{definition}
\noindent For a connected subgraph $H$ of $\mathcal K$, let $\cM_k(H)$ denote the number of elements of $\cW_k(\mathcal K)$ which yield $H$ as a subgraph.

\begin{definition}Let $\mathcal T_k$ denote the set of ordered trees with at most $k$ edges. 
\end{definition}

\begin{definition}Let $D =(d_1,\ldots d_t)$ be a sequence of positive integers with $\sum_i d_i = 2(t-1)$. Let $\ve > 0$ be a fixed constant between zero and one. Define
	\begin{equation}\label{psi}
	\psi(D, \ve) =  \frac{1}{(2m)^{t-1}} \int_{\ve}^1 \int_{\ve}^{y_t} \ldots \int_{\ve}^{y_2} \prod_{i=1}^t \frac{1}{y_i^{d_i/2}} dy_1\ldots dy_t. \end{equation}

\end{definition}

\noindent For any given sequence $D$, it is simple to compute  $\psi(D, \ve)$ using standard calculus.
Finally, for any graph $H$, set 

\begin{equation} \label{varphi} 
\varphi(H, m) = \left( \prod_{v\in V^{-}(H)} [m]^{d_{in}(v,H)}\right) \left(\prod_{v\in V^{+}(H)} [m]_{d_{out}(v, H)}\right), \end{equation}
where $$[m]^r:=m(m+1)\ldots (m+r-1)$$ $$ [m]_r:=m(m-1)\ldots (m-r+1) $$ are the upper and lower factorials. Now, we are ready to present our moment formula
\begin{align*} \label{moment1} 
C(2k,\ve,m) &= \frac{1}{1-\ve} \sum_{T \in \mathcal T_k}  \varphi(T,m)\mathcal M_{2k}(T)\psi(D(T), \ve) \\
C(2k+1, \ve, m) &= 0.
\end{align*}

\noindent  We would like to emphasize that  a sequence of constants may not correspond to the moments of any measure. Moreover, even if they do correspond to a measure, they might not uniquely define it. It was a  non-trivial task  to prove that  the constants  $C(k, \ve, m) $ defined above determine a unique measure.

\subsection{Remarks}   Our computation of the  constants $C(k,\ve,m)$ provides a quantitative explanation for the 
difference  of the spectrum from Wigner's semi-circle law.  In the computation with Erd\H{o}s-R\'{e}nyi graphs (which leads to the semi-circle law),  the moments are essentially the number of closed walks that can be performed on  trees of a given size \cite{Wigner}. For the PA graph, the story is  more complicated.  While the trees are still the family that we weight for the moment computation, one needs to consider: (1) trees of different sizes (2) distinct orderings on the trees (3) assignments  of weights which depend on the
trees' degree sequences. 
\\
\\
\noindent Numerically, one can 
compute the  approximate limit very quickly. For a given small $\ve$,  one can compute the first few moments and then use the inverse Fourier transform to obtain a distribution. The distribution matches the bulk (triangular) part of the distribution coming from the real graph closely; see Figure \ref{fitting}.
\\
\begin{figure}
	\begin{center}
		{\large }		\includegraphics[scale = 0.25]{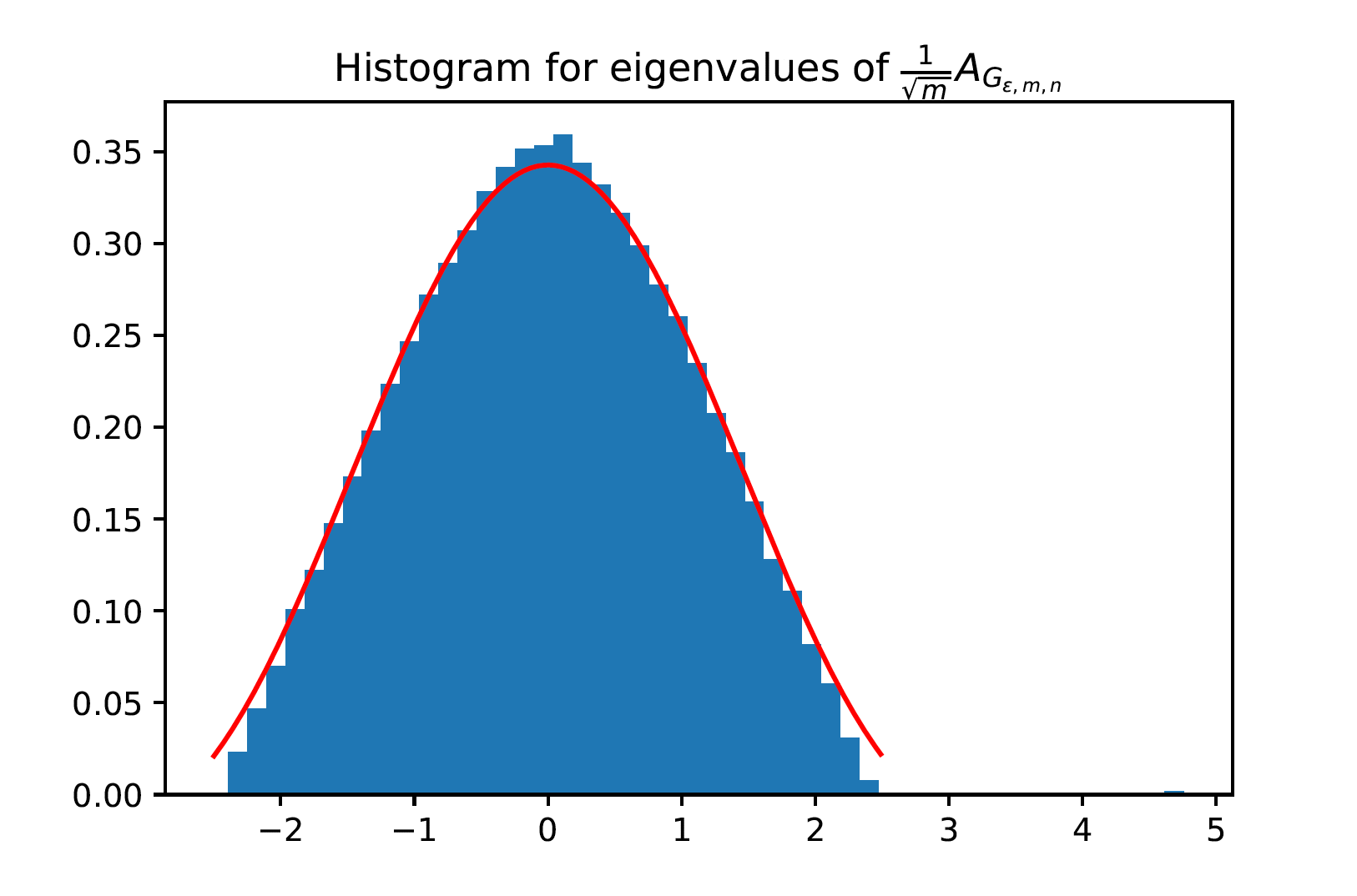}
	\end{center}
	\caption{In blue: A histogram of the eigenvalues of  $\frac{1}{\sqrt{m}}A_{G_{\ve,m,n}}$ with $\ve = 0.10$, $m= 15$ and $n=4000$. In red: We computed $C(k, 0.10, 15)$ for $k\leq 6$, and using the Inverse Fourier transform to obtain an approximation to the distribution.}\label{fitting}
\end{figure}

\subsection{ Localization of the leading eigenvectors} 

Our result concerning the eigenvectors 
is a rigorous 
theoretical justification of {\bf Observation 2}.   For a vector $v\in \mathbb R^n$, define
$$
\|v\|_\infty = \max_{j\in [n]} |v(j)|.
$$
\begin{theorem}\label{evec}Let $G_{m,n}$ be a PA random graph and  $K$ be a constant. With high probability\footnote{An event $\mathcal E$ is said to hold with high probability if $\BP[\mathcal E] = 1- o(1)$.}, 
	$$
	\|v_i\|_\infty = \frac{1}{\sqrt{2}} \pm o(1)
	$$for all $1\leq i\leq K$. Moreover, all the coordinates which do not realize the infinity norm are $o(1)$. 
\end{theorem}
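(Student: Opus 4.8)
\subsection*{Proof proposal}

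The plan is to show that, for each fixed $i$, the eigenvector $v_i$ is, in $\ell^2$-norm, within $o(1)$ of the normalized eigenvector of the ``star'' centred at vertex $i$, and then read off both conclusions coordinatewise. The first step is to collect the structural facts about $G_{m,n}$ that make this work, all holding with high probability. Writing $d_j=d(j,G_{m,n})$ and $A=A(G_{m,n})$, I would use: for each fixed $j$, $d_j=(1+o(1))\gamma_j\sqrt n$ with constants $\gamma_1>\gamma_2>\cdots>0$, and $j$ is the vertex of $j$-th largest degree; essentially all neighbours of a fixed early vertex are of low degree, e.g. $\sum_{\ell\sim j}d_\ell=n^{o(1)}\sqrt n$ and only $n^{o(1)}$ of them have degree exceeding $\log n$; the codegree of any two of the vertices $1,\dots,K+1$ is $n^{o(1)}$, and more generally every vertex $x$ satisfies $|N(x)\cap N(j)|=n^{o(1)}$ for fixed $j$; and every vertex born after time $n^{0.9}$ has degree $n^{0.05+o(1)}$, so the graph obtained from $G_{m,n}$ by deleting all edges incident to $[n^{0.9}]$ has operator norm $o(n^{1/4})$. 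All of these are first/second-moment computations against the attachment rule, using the P\'olya-urn / martingale description of the degree process from \cite{BR, Bollobas}. (Multi-edges and loops at the early vertices number only $O(1)$ resp.\ $n^{o(1)}$ and will contribute only lower-order terms throughout.)

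Second, I would pin down the location of the top eigenvalues: $\lambda_i=(1+o(1))\sqrt{d_i}$ for every fixed $i\le K+1$, so that these $K+1$ values are $\Theta(n^{1/4})$, pairwise separated by $\Theta(n^{1/4})$, and $\lambda_{K+2}\le(1-\Omega(1))\sqrt{d_{K+1}}$. The lower bound $\lambda_i\ge(1-o(1))\sqrt{d_i}$ is soft: delete $n^{o(1)}$ leaves from each of the stars centred at $1,\dots,K+1$ so that the resulting stars $S_1,\dots,S_{K+1}$ (with leaf sets $L_1,\dots,L_{K+1}$, $|L_j|=(1-o(1))d_j$) become vertex-disjoint; their union is a subgraph of $G_{m,n}$, the $j$-th piece carries a unit eigenvector $u_j$ with eigenvalue $\sqrt{|L_j|}=(1-o(1))\sqrt{d_j}$, and since the $u_j$ have disjoint support one checks that $A$ restricted to $\operatorname{span}(u_1,\dots,u_i)$ is $\operatorname{diag}(\sqrt{|L_1|},\dots,\sqrt{|L_i|})$ up to an error matrix of norm $n^{o(1)}=o(n^{1/4})$ (the off-diagonal and extra-edge terms being controlled by the codegree estimates); Courant--Fischer then gives the bound. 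The matching upper bound $\lambda_i\le(1+o(1))\sqrt{d_i}$ is the one genuinely hard ingredient; I would invoke \cite{FFF}, whose main theorem is precisely that the $i$-th eigenvalue of the PA graph is asymptotic to the square root of the $i$-th largest degree (which is $d_i$ here), or reprove it by first splitting off the $o(n^{1/4})$-norm ``late'' part and then analysing the near-bipartite graph of edges incident to $[n^{0.9}]$.

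Third, fix $i\le K$, let $u_i$ be the unit eigenvector of $S_i$ (so $u_i(i)=1/\sqrt2$, $u_i(\ell)=1/\sqrt{2|L_i|}$ for $\ell\in L_i$, and $u_i\equiv0$ elsewhere), and set $r_i:=(A-\sqrt{d_i}\,I)u_i$. I would show $\|r_i\|=o(n^{1/4})$: at $i$ and at the leaves the residual is $O(n^{1/4})\cdot\bigl(1-\sqrt{d_i/|L_i|}\bigr)+n^{o(1)}$, which is $o(n^{1/4})$ in $\ell^2$ because we discarded only $n^{o(1)}$ leaves; at every other vertex $x$ the residual equals $\sum_{w\sim x,\;w\in\{i\}\cup L_i}u_i(w)$, whose squared $\ell^2$-norm is $\tfrac{1}{2|L_i|}\sum_x\bigl(\#\{L_i\text{-neighbours of }x\}\bigr)^2+O(1)\le\tfrac{1}{2|L_i|}\bigl(\max_x\#\{L_i\text{-neighbours of }x\}\bigr)\bigl(\sum_\ell d_\ell\bigr)+O(1)=n^{o(1)}$ by the degree and $|N(x)\cap N(i)|$ bounds from the first paragraph. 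Now let $P$ be the spectral projector of $A$ onto the window $[\sqrt{d_i}-\tfrac12\delta n^{1/4},\sqrt{d_i}+\tfrac12\delta n^{1/4}]$; by the second paragraph, for a small enough constant $\delta>0$ this window contains exactly the one eigenvalue $\lambda_i$, so $P$ is the orthogonal projector onto $\operatorname{span}(v_i)$. On the range of $I-P$ the operator $A-\sqrt{d_i}\,I$ is invertible with norm at most $(\tfrac12\delta n^{1/4})^{-1}$, and $(A-\sqrt{d_i}I)(I-P)u_i=(I-P)r_i$, so $\|(I-P)u_i\|\le\|r_i\|/(\tfrac12\delta n^{1/4})=o(1)$. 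Hence $\langle u_i,v_i\rangle=\pm(1-o(1))$ and $v_i=\pm u_i+w_i$ with $\|w_i\|_2=o(1)$.

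Finally, since $|v_i(x)\mp u_i(x)|\le\|w_i\|_2=o(1)$ for every $x$, the coordinates of $v_i$ are $1/\sqrt2\pm o(1)$ at $x=i$, equal to $\pm1/\sqrt{2|L_i|}+o(1)=o(1)$ at $x\in L_i$, and $o(1)$ at every remaining vertex --- in particular at the other vertices of $[K]$ and at vertices such as $K+1$ whose degree is comparable to $\sqrt n$, which the crude pointwise bound $|v_i(x)|\le\sqrt{d_x}/|\lambda_i|$ coming from the eigenvalue equation would not by itself rule out. Thus $\|v_i\|_\infty=1/\sqrt2\pm o(1)$, attained only at vertex $i$, and all coordinates not realizing the infinity norm are $o(1)$, which is the assertion. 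The main obstacle I anticipate is the input of the second paragraph: the sharp upper bound $\lambda_i\le(1+o(1))\sqrt{d_i}$ (equivalently, the \cite{FFF}-type estimate), together with the codegree / second-neighbourhood bounds of the first paragraph used to control the residual $\|r_i\|$; the rest is soft linear algebra (Courant--Fischer, a spectral-gap / resolvent estimate) and routine moment computations.
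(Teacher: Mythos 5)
Your proposal is essentially correct but follows a genuinely different route from the paper. The paper reuses the global decomposition of Section 7: it sets $B_n$ equal to the adjacency matrix of the star union $G_4$ and $C_n = A_n - B_n$, bounds the operator norm $\|C_n\| = O(n^{43/200})$ via the $G_1, G_2, G_3$ estimates (the FFF norm bound for $\|G_1\|$, the weighted row-sum trick for $\|G_2\|$, the loss/degree argument for $\|G_3\|$), proves an eigenvalue-gap lower bound $\lambda_i(B_n)-\lambda_{i+1}(B_n)\geq n^{1/4}/\log^3 n$ from FFF's degree-gap theorem, and then applies Davis--Kahan to conclude $\|v_i - w_i\|_2 = o(1)$ with $w_i$ the star eigenvector. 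You instead compare $v_i$ with the local star eigenvector $u_i$ through a residual bound $\|(A-\sqrt{d_i}I)u_i\|_2 = n^{o(1)}$ together with a spectral-window/resolvent estimate, which is Davis--Kahan proved by hand. The advantage of your route is that it needs no operator-norm bound on the global ``noise'' at all: only local statistics around the star center (codegrees, $\sum_{\ell\sim i} d_\ell$, loops and multi-edges), plus the FFF inputs $\lambda_j=(1+o(1))\sqrt{\Delta_j}$ for fixed $j$ and the degree-gap theorem; the price is that you import the hard eigenvalue upper bound from \cite{FFF}, whereas the paper's Section 7 machinery delivers it (and more) and is then recycled for free in the eigenvector proof.

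Two inaccuracies in your structural inputs should be flagged, though neither is fatal to the architecture. First, it is not true that whp $d_j=(1+o(1))\gamma_j\sqrt n$ for deterministic constants $\gamma_j$, nor that vertex $j$ is whp the vertex of $j$-th largest degree: the rescaled degrees converge to nondegenerate random variables and the degree ranking can disagree with the birth order with probability bounded away from $0$. The fix is to center the stars at the vertices realizing $\Delta_1,\dots,\Delta_K$ (whp early-born) and to use FFF's gap $\Delta_{i}\leq\Delta_{i-1}-\sqrt n/\log n$, shrinking your spectral window from width $\Theta(n^{1/4})$ to, say, $n^{1/4}/\log^2 n$; since your residual is $n^{o(1)}$ the resolvent step is unaffected. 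Second, some of the facts you describe as ``first/second-moment computations'' need more: the bound $\max_x |N(x)\cap N(i)| = n^{o(1)}$ uniformly over all $x$ requires a tail bound plus a union bound (doable via Poisson-type concentration or higher moments, with the negative-correlation lemma as input), and the claimed $o(n^{1/4})$ norm bound for the late-vertex graph is essentially the paper's Step 3 rather than a routine moment estimate --- though, as your argument is written, that norm bound is never actually used, so this last item can simply be dropped.
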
 
\begin{figure}
	\begin{center}
		\includegraphics[scale = 0.5]{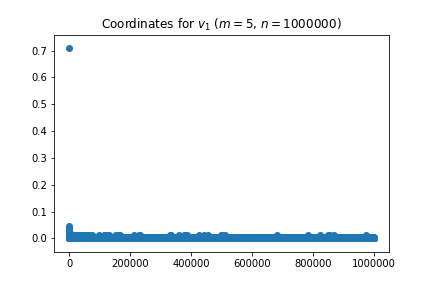}
	\end{center}
	\caption{Coordinates of $v_1$: We can see $\|v_1\|_\infty\approx \frac{1}{\sqrt{2}}$ and all the other coordinates are small.}\label{vector}
\end{figure}
\noindent We also obtained a new result regarding the power law of the edge eigenvalues. Let $\Delta_i, i=1,2, \dots$ be the degrees in decreasing order.

\begin{theorem}\label{edge1}Let $G_{m,n}$ be a PA random graph and $k= n^{1/25}$. Then 
	for $i=1,2,\ldots, k$, we have, whp, that  $$\lambda_i(G_{m,n}) = (1\pm o(1)) \sqrt{\Delta_i(G_{m,n})}.$$
\end{theorem}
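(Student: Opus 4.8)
The plan is to establish the two-sided estimate $\lambda_i(G_{m,n})=(1\pm o(1))\sqrt{\Delta_i}$ for all $i\le k$ by isolating a matrix whose spectrum is essentially $\{\pm\sqrt{\Delta_j}\}$ and treating the rest of $A$ as a small perturbation, controlled by Weyl's inequality. Fix a small constant $\delta>0$, set $N:=k\,n^{\delta}$, let $S=\{v_1,\dots,v_N\}$ be the $N$ vertices of largest degree (so $d(v_j,G_{m,n})=\Delta_j$), and write $\mu(u,w)$ for the number of parallel $uw$-edges. Decompose the adjacency matrix as $A=A_{\mathrm{priv}}+E$ with $E=A_{\mathrm{cross}}+A_{\mathrm{share}}+A_{\mathrm{low}}$, where $A_{\mathrm{cross}}$ collects the edges with both endpoints in $S$; $A_{\mathrm{low}}$ is the adjacency matrix of $G_{m,n}$ restricted to $V\setminus S$; and the edges with exactly one endpoint in $S$ are split so that a leaf $u\notin S$ of the star at $v_j$ enters $A_{\mathrm{priv}}$ when $u$ has no neighbour in $S\setminus\{v_j\}$ and enters $A_{\mathrm{share}}$ otherwise. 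The key point of this split is that $A_{\mathrm{priv}}$ is a \emph{vertex-disjoint} union of stars $K_1^\ast,\dots,K_N^\ast$ (plus isolated vertices), where $K_j^\ast$ has centre $v_j$ and leaves the private leaves of $v_j$ with multiplicities inherited from $G_{m,n}$; two such stars cannot share a vertex, since a shared vertex would be adjacent to two hubs and hence would not be private. Hence the nonzero eigenvalues of $A_{\mathrm{priv}}$ are exactly $\pm\big(\sum_{u\in\mathrm{priv}(v_j)}\mu(v_j,u)^2\big)^{1/2}$, $j=1,\dots,N$.

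Next I would collect the preferential-attachment facts that make $A_{\mathrm{priv}}$ behave like $\mathrm{diag}(\pm\sqrt{\Delta_j})$; these are quantitative strengthenings of the structural statements already behind Theorem~\ref{evec}, now needed with high probability uniformly over $j\le N=n^{o(1)}$, and are obtained by the usual martingale/Chernoff tail bounds: (i) $\Delta_j=(1+o(1))m\sqrt{n/j}$ uniformly for $j\le N$, so $\Delta_j\gg(\log n)^{O(1)}$ and $\Delta_{N+1}/\Delta_k=(1+o(1))\sqrt{k/N}\to 0$; (ii) each hub carries only $n^{o(1)}$ multiple edges, i.e.\ $\sum_u\mu(v_j,u)\big(\mu(v_j,u)-1\big)=n^{o(1)}$; (iii) at most $mN$ edges lie inside $S$, and at most $N\,n^{o(1)}$ vertices outside $S$ have two or more neighbours in $S$, so $A_{\mathrm{share}}$ has at most $N\,n^{o(1)}$ edges; (iv) $d_S(v_j)\le \sqrt N\,n^{o(1)}=o(\Delta_N)$. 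Items (ii)--(iv) combine to give $\sum_{u\in\mathrm{priv}(v_j)}\mu(v_j,u)^2=(1+o(1))\Delta_j$ for every $j\le N$ (the upper bound from (ii), the lower bound from $\mu\ge 1$ together with (iii)--(iv)); thus the positive part of the spectrum of $A_{\mathrm{priv}}$ is $\{(1+o(1))\sqrt{\Delta_j}\}_{j\le N}$ with a \emph{uniform} error, and sorting yields $\lambda_i(A_{\mathrm{priv}})=(1+o(1))\sqrt{\Delta_i}$ for every $i\le k$.

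It then remains to show $\|E\|=o(\sqrt{\Delta_k})$. For $A_{\mathrm{cross}}$ and $A_{\mathrm{share}}$ the trivial bound $\lambda_1(M)\le(\Tr M^2)^{1/2}\le(2m\,|E(M)|)^{1/2}$ is enough: by (iii) it gives $\sqrt{2m^2N}$ and $\sqrt{2mN\,n^{o(1)}}$ respectively, both $o(\sqrt{\Delta_k})$ once $\delta$ is small. The one substantial estimate is
\[
\lambda_1(A_{\mathrm{low}})\le\sqrt{\Delta_{N+1}}\cdot n^{o(1)},
\]
i.e.\ that deleting the $N$ vertices of largest degree leaves a graph whose spectral radius is not much above the square root of its new maximum degree. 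This I would prove by the trace/moment method: $\lambda_1(A_{\mathrm{low}})^{2\ell}\le\Tr(A_{\mathrm{low}})^{2\ell}$ counts closed walks of length $2\ell$ in $V\setminus S$, and this count is at most $|V|\,(C\Delta_{N+1})^{\ell}\,n^{o(1)}$ because $G_{m,n}$ is uniformly sparse (any $t$ vertices span at most $mt$ edges) and locally tree-like --- its short cycles are few and concentrated on the early vertices, nearly all of which lie in $S$ --- after which one optimizes over $\ell=\Theta(\log n)$. Since $\Delta_{N+1}/\Delta_k\to 0$, we get $\|E\|\le\lambda_1(A_{\mathrm{cross}})+\lambda_1(A_{\mathrm{share}})+\lambda_1(A_{\mathrm{low}})=o(\sqrt{\Delta_k})=o(\sqrt{\Delta_i})$ for all $i\le k$, and Weyl's inequality $|\lambda_i(A)-\lambda_i(A_{\mathrm{priv}})|\le\|E\|$ closes the argument. (For the lower bound in isolation one may alternatively feed the $i$ pairwise-disjoint top eigenvectors of $K_1^\ast,\dots,K_i^\ast$ into the Courant--Fischer characterisation of $\lambda_i$; the Weyl route simply delivers both inequalities together.)

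The main obstacle is exactly the estimate for $\lambda_1(A_{\mathrm{low}})$: running the closed-walk count for a vertex-deleted preferential-attachment graph uniformly and with a small enough error, especially for walks that traverse cycles. This is also what fixes the exponent --- requiring $\sqrt{\Delta_{N+1}}\,n^{o(1)}$, $\sqrt{m^2N}$ and the hub--hub error terms all to be $o(\sqrt{\Delta_k})$, and balancing the polynomial losses, is what leaves $k=n^{1/25}$; the a priori bound $\lambda_1(G_{m,n})=O(n^{1/4})=O(\sqrt{\Delta_1})$ from Theorem~\ref{moments thm intro} helps truncate some estimates but, seeing no individual order statistics, cannot replace the argument.
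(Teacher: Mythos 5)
Your overall architecture --- a vertex-disjoint union of hub stars plus a ``noise'' matrix controlled by Weyl --- is the same as the paper's, but the execution has a genuine gap at exactly the step you flag as the main obstacle: the bound $\lambda_1(A_{\mathrm{low}})\le\sqrt{\Delta_{N+1}}\,n^{o(1)}$. The intermediate claim $\Tr(A_{\mathrm{low}}^{2\ell})\le |V|\,(C\Delta_{N+1})^{\ell}n^{o(1)}$ does not follow from ``uniform sparsity plus locally tree-like'': the generic consequence of a maximum-degree bound is $|V|\,\Delta_{N+1}^{2\ell}$, and getting the exponent $\ell$ instead of $2\ell$ is precisely the assertion $\lambda_1\approx\sqrt{\Delta}$ that you are trying to prove. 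Making the walk count rigorous for a vertex-deleted PA graph would require controlling degree correlations, multi-edges, and revisiting walks; worse, your deleted set $S$ is the random set of the $N$ largest-degree vertices, so all the estimates are conditioned on degree ranks, which destroys the clean product formula of Theorem \ref{bt} on which every expectation computation in this model rests (one would first have to localize $S\subseteq[1,Nn^{o(1)}]$ whp, a step you do not supply). The paper sidesteps the trace method entirely: it truncates at deterministic times ($S=[1,s]$, $T=[t,n]$ with $s=n^{1/7}$, $t=n^{13/25}$), bounds the late part $G_2=G[s,n]$ by the weighted row-sum bound \eqref{trick} with weights $c_i=(n/i)^{1/4}$ fed by the whp degree estimates of Theorem \ref{ub deg}, giving $\|G_2\|=O(n^{\delta}(n/s)^{1/4})$, and bounds the early part $G_1=G[1,t]$ by the known norm bound $\|G_{m,t}\|\le t^{1/4}\sqrt{\log t}$ (Theorem \ref{pa norm}). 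If you swap your trace step for that row-sum argument and use a deterministic truncation, your outline essentially becomes the paper's proof.

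A secondary problem: your input (i), $\Delta_j=(1+o(1))m\sqrt{n/j}$ uniformly for $j\le N$, is false --- individual degrees and their order statistics in this model have non-vanishing relative fluctuations (this is why the paper's Step 1 only asserts $d(i,G)\ge \BE[d(i,G)]/b$ with $b=n^{1/20}$, via Theorem \ref{deg bounds}, and why its upper bounds carry $n^{\delta}$ losses from Theorem \ref{ub deg}). You only need cruder versions with $n^{o(1)}$ or $n^{1/20}$ losses, but then the parameter bookkeeping behind ``$\Delta_{N+1}/\Delta_k\to0$'' and ``$\|E\|=o(\sqrt{\Delta_k})$'' must be redone with those losses, which is exactly the balancing act (choices of $b,s,t,k$) that occupies the paper's proof; your private-leaf accounting (ii)--(iv) is the analogue of the paper's loss function $L(u)$ and is fine in spirit, but it too is currently computed as if $S$ were deterministic.
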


\noindent This extends the main result of \cite{FFF} which yields the statement for $k=O(1)$.  It is an interesting open problem to find the threshold value for $k$. 
\\
\\
\noindent Our analysis offers a more satisfying explanation of {\bf Heuristics 3} mentioned in the introduction, which 
asserts that the power law of the leading eigenvalues is due 
to the localization of the corresponding eigenvectors. In fact, both properties (the power law and the localization) are consequences of the fact that the PA graph can be decomposed into a structural part and a random part. The structural part is a union of stars, whose leading eigenvalues satisfy a power law, and whose leading eigenvectors are localized. 
One can then prove that the impact of the random part on these eigenvalues and eigenvectors is negligible. Thus, the above properties still hold for the union of the two parts. 

\section{Notation and Combinatorial Tools}\label{notation section}
\noindent Fix $i<j$. To obtain an edge between these two points in $G_{m,n}$, by the definition of the graph process we must have had a point in $[(j-1)m + 1, jm]$ joining  a point in $[(i-1)m+1,im]$ in $G_{1,mn}$. There are  clearly $m^2$ possible pairs. A \textit{labeled} edge $(i,j)$ for us will be one specific choice among the $m^2$ choices. A \textit{labeled} graph will denote a collection of labeled edges. Since the out degree in $G_{1,mn}$ is exactly one, we only consider labeled graph with this property. Theorem \ref{bt} will give us an exact formula for the probability of a labeled graph to appear in $G_{m,n}$. 
\\
\\
\noindent Let $H$ be an ordered graph, and denote by $X_{m,n}(H)$ the number of subgraphs of $G_{m,n}$ isomorphic to $H$. That is, 
\begin{equation}\label{g count}
X_{m,n}(H) = \sum_{\substack{\tilde H \subset \cK_{m,n}\\ \tilde H \cong H}} \mathbb I_{\tilde H \subset G_{m,n}} .
\end{equation}Let $V$ be a subset of the vertices and  $G[V]$ be the subgraph of $G$ spanned by  $V$. We will be interested in counting the number of copies of $H$ is some fixed subset $V$. We extend the definition of graph counts as follows
\begin{equation}\label{g count v}
X_{m,n}(H,V) = \sum_{\substack{\tilde H \subset \cK_{m,n}[V]\\ \tilde H \cong H}} \mathbb I_{\tilde H \subset G_{m,n}[V]} .
\end{equation}Note that in particular, $X_{m,n}(H,[n]) = X_{m,n}(H)$. Throughout this paper we will count graphs which have bounded size. 
\\
\\
\noindent We are going to study the spectrum of a PA random graph. Since this is a multigraph, its adjacency matrix is not necessarily a $(0,1) $ matrix.

\begin{definition}Let $G$ be a multigraph on vertex set $[n]$. Define the adjacency matrix of $G$ to be the matrix $A_G$, where for $i\neq j$, $(A_G)_{ij}$ is the number of edges between $i$ and $j$, and for $i = j$, $(A_G)_{ii}$ is the number of loops at $i$.
\end{definition}
\noindent We will be using the following well known identity
\begin{lemma}Let $G$ be a multigraph, and let $A_G$ denote its adjacency matrix. Then
	 $$
	\Tr(A_G^k) = |\mathcal W_k(G)|.$$
\end{lemma}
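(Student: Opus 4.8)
The plan is to expand the trace of the matrix power entrywise and interpret each surviving term combinatorially as a closed walk. First I would write
\[
\Tr(A_G^k) = \sum_{v_1 \in [n]} (A_G^k)_{v_1 v_1} = \sum_{v_1, v_2, \ldots, v_k \in [n]} (A_G)_{v_1 v_2} (A_G)_{v_2 v_3} \cdots (A_G)_{v_k v_1},
\]
where the indices $v_1, \ldots, v_k$ range independently over the vertex set. The key point is to track what each entry $(A_G)_{v_i v_{i+1}}$ contributes. By the definition of the adjacency matrix of a multigraph given just above the lemma, $(A_G)_{v_i v_{i+1}}$ equals the number of edges of $G$ joining $v_i$ to $v_{i+1}$ (counting loops when $v_i = v_{i+1}$), so one can write $(A_G)_{v_i v_{i+1}} = \sum_{e_i} \mathbb I[e_i \text{ is incident to } v_i \text{ and } v_{i+1}]$, the sum ranging over all edges $e_i$ of $G$.

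Next I would substitute these expansions into the product and interchange the sums, obtaining
\[
\Tr(A_G^k) = \sum_{v_1, \ldots, v_k} \ \sum_{e_1, \ldots, e_k} \ \prod_{i=1}^{k} \mathbb I[e_i \text{ incident to } v_i \text{ and } v_{i+1}],
\]
with the cyclic convention $v_{k+1} = v_1$. Each term of this double sum is $1$ precisely when $e_i$ is incident to $v_i$ and $v_{i+1}$ for every $i$, i.e.\ exactly when the sequence $(v_1, e_1, v_2, e_2, \ldots, e_k, v_{k+1})$ with $v_{k+1} = v_1$ is a closed walk of length $k$ in $G$ in the sense of Definition~\ref{walk1}; otherwise the term is $0$. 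Hence the whole sum counts closed walks of length $k$, which is exactly $|\mathcal W_k(G)|$. The only mild subtlety worth a sentence is that a loop at $v$ contributes to $(A_G)_{vv}$ and correctly corresponds to a step of the walk that stays at $v$ along that loop edge; with the multigraph convention already fixed in the excerpt this is automatic, so there is essentially no obstacle here — the identity is a bookkeeping statement and the proof is a direct unwinding of definitions.
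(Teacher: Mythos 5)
Your proof is correct: the entrywise expansion of $\Tr(A_G^k)$, with $(A_G)_{v_iv_{i+1}}$ interpreted as the number of edges joining $v_i$ and $v_{i+1}$ (loops counted once, as in the paper's convention $(A_G)_{ii}=$ number of loops at $i$), puts the nonzero terms in bijection with the closed walks of Definition \ref{walk1}. The paper states this lemma without proof as a well-known identity, and your argument is exactly the standard bookkeeping it implicitly relies on, so there is nothing further to reconcile.
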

\noindent Each walk in  $\mathcal W_k(G)$  defines a  connected subgraph of $G$,  obtained by taking the union of the edges which appear in the walk.
\\
\\
 \noindent Hence, if $G$ is a random graph which is always a subgraph of $\mathcal K$, then for any $W\in \mathcal W_k(\mathcal K)$, the event $``W\subset G"$ denotes the event that the subgraph given by $W$ appears as a subgraph of $G$. Hence, we can re-write the above as 
$$
\Tr(A_G^k) = \sum_{W\in \mathcal W_k(\mathcal K)} \mathbb I_{W\subset G} .
$$
For a connected subgraph $H$ of $\mathcal K$, let $\cM_k(H)$ denote the number of elements of $\cW_k(\mathcal K)$ which yield $H$ as a subgraph. Hence, if we denote by $\mathcal C(\mathcal K)$ the set of connected subgraphs of $\mathcal K$, we can re-write the above as 
$$
\Tr(A_G^k) = \sum_{H\in \mathcal C(\mathcal K)} \cM_k(H) \mathbb{I}_{H\subset G}
$$
Note that $\mathcal C(\mathcal K)$ is a set of labeled
 graphs, so in order to write it as graph counts, we need to decompose it even further. Let $\mathcal F(\mathcal K)$ denote the set of graphs which are isomorphic to a member of $\mathcal C(\mathcal K)$. Note that by our definition of isomorphism, we have that $\mathcal F(\mathcal K)$ is a set of ordered graphs. Furthermore, note that $\mathcal M_k(\cdot)$ was defined for labeled graphs, but since $H_1 \cong H_2$ implies $\mathcal M_k(H_1) = \mathcal M_k(H_2)$ then when we write $\mathcal M_k(H)$ for a   $H\in\mathcal F(\mathcal K)$ we refer to the obvious quantity. Hence, we arrive at the final form we will be using in this paper:
\begin{equation}\label{to use}
\Tr(A_G^k) = \sum_{H \in \mathcal F(\mathcal K)}\mathcal M_k(H) X(H, G) 
\end{equation}where $X(H,G)$ is the number of copies of $H$ in $G$. 
\\
\\
\noindent Consider the following example: Suppose we sampled the following graph $G_{2,2,} \in \mathcal G_{2,2}$:
\begin{center}
	\begin{tikzpicture}
	[acteur/.style={circle, fill=black!25, inner sep=0pt,minimum size=5pt,}] 
	\node (v1) at ( 0,0) [acteur][label=below:$1$]{};
	\node (v2) at ( 2,0)[acteur][label=below:$2$]{}; 
	\draw (v1) -- (v2);
	\draw[line width=0.5pt] (v1) to [out=45,in=135,looseness=30] (v1);
	\draw[line width=0.5pt] (v1) to [out=60,in=120,looseness=20] (v1);
	\draw[line width=0.5pt] (v2) to [out=60,in=120,looseness=20] (v2);
	\node (d)at (1,-0.1) [fill=white,draw=none][label=below:$G_{2,2}$]{};
	\end{tikzpicture}
\end{center}
Then, $$
A_{G_{2,2}} = \begin{bmatrix}
2 & 1 \\
1 & 1
\end{bmatrix} \qquad \text{ and} \qquad \Tr(A_{G_{2,2}}^2) = 7
$$
\noindent Now we carry out the computation as suggested by equation \eqref{to use}. The complete graph $\mathcal K_{2,2}$ is given by: 
\begin{center}
	\begin{tikzpicture}
	[acteur/.style={circle, fill=black!25, inner sep=0pt,minimum size=5pt,}] 
	\node (v1) at ( 0,0) [acteur][label=below:$1$]{};
	\node (v2) at ( 2,0)[acteur][label=below:$2$]{}; 
	\draw[line width=0.5pt] (v1) to [out=125,in=230,looseness=30] (v1);
	\draw[line width=0.5pt] (v1) to [out=125,in=230,looseness=20] (v1);
	\draw[line width=0.5pt] (v1) to [out=125,in=230,looseness=10] (v1);
	\draw[line width=0.5pt] (v2) to [out=55,in=320,looseness=30] (v2);
	\draw[line width=0.5pt] (v2) to [out=55,in=320,looseness=20] (v2);
	\draw[line width=0.5pt] (v2) to [out=55,in=320,looseness=10] (v2);
	\draw[line width=0.5pt] (v1) .. controls(1,0.15) .. (v2);
	\draw[line width=0.5pt] (v1) .. controls(1,0.30) .. (v2);
	\draw[line width=0.5pt] (v1) .. controls(1,-0.15) .. (v2);
	\draw[line width=0.5pt] (v1) .. controls(1,-0.30) .. (v2);
	\node (d)at (1,-0.35) [fill=white,draw=none][label=below:$\mathcal K_{2,2}$]{};
	\end{tikzpicture}
\end{center}
The only graphs $H\in \mathcal F(\mathcal K_{2,2})$ which have  $X(H, G_{2,2}) >0$ are the following: 
\begin{center}
	\begin{tikzpicture}
	[acteur/.style={circle, fill=black!25, inner sep=0pt,minimum size=5pt,}] 
	\node (v1) at ( 0,0) [acteur][label=below:$v_1$]{};
	\draw[line width=0.5pt] (v1) to [out=45,in=135,looseness=30] (v1);
	\node(e1) at(0,1.4)[fill=white,draw=none][label=below:$e_1$]{};
	\node (d)at (0,-0.5) [fill=white,draw=none][label=below:$H_1$]{};
	\end{tikzpicture}
	\begin{tikzpicture}
	[acteur/.style={circle, fill=black!25, inner sep=0pt,minimum size=5pt,}] 
	\node (v1) at ( 0,0) [acteur][label=below:$v_1$]{};
	\draw[line width=0.5pt] (v1) to [out=45,in=135,looseness=20] (v1);
	\draw[line width=0.5pt] (v1) to [out=45,in=135,looseness=30] (v1);
	\node(e1) at(0,1.4)[fill=white,draw=none][label=below:$e_1$]{};
	\node(e2) at(-0.35,0.1)[fill=white,draw=none][label=left:$e_2$]{};
	\node (d)at (0,-0.5) [fill=white,draw=none][label=below:$H_2$]{};
	\end{tikzpicture}
	\begin{tikzpicture}
	[acteur/.style={circle, fill=black!25, inner sep=0pt,minimum size=5pt,}] 
	\node (v1) at ( 0,0) [acteur][label=below:$v_1$]{};
	\node (v2) at (2,0)[acteur][label=below:$v_2$]{};
	\draw(v1) -- (v2);
	\node (d)at (1,-0.5) [fill=white,draw=none][label=below:$H_3$]{};
	\node(f) at(1,0.6)[fill=white,draw=none][label=below:$f_1$]{};
	\end{tikzpicture}
\end{center}
The closed walks of length two that may have formed each are: 
\begin{itemize}
	\item $H_1$: $(v_1,e_1,v_1,e_1,v_1)$. 
	\item $H_2$: $(v_1,e_1,v_1,e_2,v_1)$, and $(v_1,e_2,v_1,e_1,v_1)$. 
	\item $H_3$: $(v_1,f_1,v_2,f_1,v_1)$, and $(v_2, f_1,v_1,f_1,v_2)$.
\end{itemize}
Hence, $\mathcal M_2(H_1) = 1$, and $\mathcal M_2(H_2)=\mathcal M_2(H_3)= 2$. Computing the graph counts we get: $X(H_1, G_{2,2}) = 3$, $X(H_2, G_{2,2}) = 1$, $X(H_3, G_{2,2}) = 1$. Hence, 
$$
\sum_{H\in \mathcal F(\mathcal K_{2,2})} \mathcal M_2(H) X(H, G_{2,2})  = 1(3) + 2(1) + 2(1)= 7
= \Tr(A_{G_{2,2}}^2). $$
\begin{definition}Let $A_n$ be an $n\times n$ symmetric matrix. Denote by  $\{\lambda_i\}_{i=1}^n$ the eigenvalues of $A_n$. We define the empirical spectral distribution (ESD) to be the following measure:$$ \mu_{A_n} (x) := \frac{1}{n}\sum_{i = 1}^n \mathbb I_{\lambda_i = x}.$$
\end{definition}
\noindent Usually when it is required, we will denote the eigenvalues by $\lambda_i(A_n)$, but if the matrix $A_n$ is clear from the context, we will simply write $\lambda_i$. Moreover, it will always be assumed that the eigenvalues are labeled to be non-increasing, i.e., $\lambda_i\geq \lambda_{i+1}$. We are interested in studying the measures when the matrices $A_n$ follow the preferential attachment rule. We extend the definition of ESD to graphs. 
\begin{definition}Let $G$ be a graph on $n$ vertices, and let $A_G$ denote its adjacency matrix. Then the ESD of $G$ which we denote by $\mu_G$, is the ESD of the matrix $A_G$. 
\end{definition}
\noindent Note that when the graph $G$ is random, we will obtain a random measure. We are interested in studying the measures $\mu_{G_{m,n}}$ when $G_{m,n}\in \mathcal G_{m,n}$. In particular, we are interested in understanding their limiting behavior, which we formalize through the following notions. 
\begin{definition}\label{weak}Let $\{\mu_n\}_{n = 1}^{\infty}$ be a sequence of (deterministic) measures. We say that $\{\mu_n\}$ converges weakly to a measure $\mu$, if for every bounded, continuous function $f$ we have $$\int f d\mu_n \rightarrow \int fd\mu.$$
\end{definition}
\noindent We use the following convention: 
$$ 
\BE[f(\mu)] := \int f d\mu.
$$In the case when $f(x) = x^k$, we refer to $\BE[\mu^k]$ as the $k$-th moment of the measure. If $\mu$ is determined by its moments, then convergence of the moments guarantees weak convergence: 
\begin{lemma}\label{moments lemma}Let $\{\mu_n\}$ be a sequence of (deterministic) measures. If $\mu$ is uniquely defined by its moments, and for every $k\geq 0$ we have $$ \int x^k d\mu_n \rightarrow \int x^k d\mu$$ then $\{\mu_n\}$ converges weakly in probability to $\mu$. 
\end{lemma}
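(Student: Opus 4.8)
The plan is to run the classical method of moments, in three stages: tightness of the sequence, extraction of a weakly convergent subsequence whose moments agree with those of $\mu$, and identification of the limit with $\mu$ via the uniqueness hypothesis.

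First I would establish tightness. Taking $k=0$ in the hypothesis gives $\mu_n(\mathbb R)=\int x^0\,d\mu_n\to 1$, and taking $k=2$ gives $\int x^2\,d\mu_n\to\int x^2\,d\mu<\infty$; in particular the second moments are bounded, say by some $M$. Chebyshev's inequality then yields $\mu_n(\{|x|>R\})\le M/R^2$ uniformly in $n$, so the family $\{\mu_n\}$ is tight. By Prokhorov's theorem (equivalently the Helly selection theorem), every subsequence of $\{\mu_n\}$ has a further subsequence $\{\mu_{n_j}\}$ converging weakly to some probability measure $\nu$.

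The heart of the argument is to show $\int x^k\,d\nu=\int x^k\,d\mu$ for all $k\ge 0$. Weak convergence alone does not suffice, since $x\mapsto x^k$ is unbounded; the missing ingredient is uniform integrability of the $k$-th power along the subsequence, which I would extract from the hypothesis applied to the exponent $2k$. Indeed $\int x^{2k}\,d\mu_n$ converges, hence is bounded by some $M_k$, so $\sup_n\int_{\{|x|>R\}}|x|^k\,d\mu_n\le R^{-k}M_k\to 0$ as $R\to\infty$; and by the portmanteau theorem applied to the nonnegative continuous function $|x|^{2k}$, $\int|x|^{2k}\,d\nu\le\liminf_j\int|x|^{2k}\,d\mu_{n_j}<\infty$, so the analogous tail bound holds for $\nu$. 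Splitting $x^k=x^k\mathbb{I}_{|x|\le R}+x^k\mathbb{I}_{|x|>R}$, choosing $R$ outside the (countable) set of atoms of $\nu$ so that $x^k\mathbb{I}_{|x|\le R}$ is bounded and $\nu$-a.e.\ continuous, then letting $j\to\infty$ and afterward $R\to\infty$, gives $\int x^k\,d\nu=\lim_j\int x^k\,d\mu_{n_j}=\int x^k\,d\mu$.

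Since $\mu$ is uniquely determined by its moments and $\nu$ shares all of them, $\nu=\mu$. Hence every subsequence of $\{\mu_n\}$ admits a further subsequence converging weakly to the fixed limit $\mu$, and the standard subsequence principle forces $\mu_n\to\mu$ weakly. In the applications the measures are random and the moment convergence holds only in probability; one reduces to the deterministic statement by the usual device that convergence in probability to a constant is equivalent to every subsequence having an almost surely convergent sub-subsequence, together with a diagonal extraction over the countably many values of $k$ so that all moments converge almost surely along a common sub-subsequence, to which the argument above applies pathwise. The only genuinely delicate point is the uniform-integrability step: transferring moments across a weak limit requires controlling tails uniformly in $n$, and that is exactly what convergence (hence boundedness) of the higher moments provides; tightness, Helly selection, and the subsequence principle are routine.
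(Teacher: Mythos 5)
Your proof is correct and follows essentially the same route as the paper's: tightness plus Prokhorov, identification of any subsequential weak limit with $\mu$ via moment-determinacy, and the subsequence principle. The only difference is that you spell out the steps the paper leaves implicit (tightness from bounded second moments, and the uniform-integrability/truncation argument transferring moments to the weak limit), which is a welcome but not divergent elaboration.
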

\begin{proof}The proof is a standard exercise, see Appendix. 
\end{proof}
\noindent In the case when $\mu$  is the ESD of a graph, we refer to $\BE[\mu_G^k]$ as the $k$-th spectral moment. Using the notation above we have,
\begin{align} 
\BE[\mu_G^k] 
&= \int x^k d\mu \nonumber\\
&= \frac{1}{|V(G)|} \sum_i \lambda_i^k\nonumber \\
&= \frac{1}{|V(G)|} \Tr(A_G^k) \nonumber\\ 
&= \frac{1}{|V(G)|} \sum_{H \in \mathcal F(G)}\mathcal M_k(H) X(H, G). \label{muk}
\end{align}
Note that when $\{\mu_n\}$ are random measures, then $\BE[f(\mu_n)]$ are random variables, so we need to specify the type of convergence one obtains in Definition \ref{weak}. The first and most natural definition deals with the behavior of a ``typical" measure $\mu_n$. 
\begin{definition}\label{exp}Let $A_n$ be a $n\times n$ random symmetric matrix, and let $\mu_{n}$ denote its ESD. We define the expected ESD, $\bar{\mu}_n$, via duality as follows: 
	$$ \int f d\bar\mu_n := \BE\left[\int fd\mu_n\right]$$Moreover, we say that $\{\mu_n\}$ converges in expectation to a fixed measure $\mu$ iff $\{\bar\mu_n\}$ converges weakly to $\mu$. 
\end{definition}
\noindent A stronger type of convergence is given by the following notion. 
\begin{definition}\label{weak p}Let $\{\mu_n\}$ be a sequence of random measures. We say that $\{\mu_n\}$ converges weakly in probability to a fixed measure $\mu$ if for any bounded continuous function $f$ and $\ve >0$ we have $$ \lim_{n\rightarrow \infty}\BP\left[\left |\int f d\mu_n - \int f d\mu\right| >\ve\right] = 0 .$$ 
\end{definition}
\noindent Just as before, if we know that $\mu$ is determined by its moments, then we can apply the moment method: 
\begin{lemma}\label{conv in moments}Let $\{\mu_n\}$ be a sequence of random measures, and let $\mu$ be a fixed measure which is uniquely defined by its moments. If $$ \int x^k d\mu_n \rightarrow \int x^k d\mu$$ in probability, then $\{\mu_n\}$ converges weakly in probability to $\mu$.
\end{lemma}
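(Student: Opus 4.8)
The plan is to bootstrap from the deterministic method of moments (Lemma \ref{moments lemma}) via the standard subsequence characterization of convergence in probability: a sequence of real random variables $(Y_n)$ converges to a constant $c$ in probability if and only if every subsequence of $(Y_n)$ admits a further subsequence converging to $c$ almost surely. So I would fix a bounded continuous function $f$ and, writing $Y_n := \int f\, d\mu_n$, reduce the goal (Definition \ref{weak p}) to showing that every subsequence of $(Y_n)$ has a further subsequence along which $Y_n \to \int f\, d\mu$ almost surely.

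Let $(n_j)$ be an arbitrary subsequence. By hypothesis, for each fixed integer $k \geq 0$ we have $\int x^k\, d\mu_{n_j} \to \int x^k\, d\mu$ in probability as $j \to \infty$. Applying the subsequence characterization first for $k=0$, then inside the resulting subsequence for $k=1$, and so on, and finally passing to the diagonal subsequence, I obtain one subsequence $(n_{j_\ell})_{\ell \geq 1}$ along which
\[
\int x^k\, d\mu_{n_{j_\ell}} \longrightarrow \int x^k\, d\mu \qquad\text{almost surely, simultaneously for all } k \geq 0 .
\]
Let $\Omega_0$ be the event on which all of these countably many convergences hold; as a countable intersection of probability-one events, $\BP[\Omega_0] = 1$.

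On every sample point of $\Omega_0$, the deterministic measures $\mu_{n_{j_\ell}}$ have all their moments converging to the corresponding moments of $\mu$, and $\mu$ is uniquely determined by its moments, so Lemma \ref{moments lemma} applies pathwise and yields $\mu_{n_{j_\ell}} \to \mu$ weakly. Since $f$ is bounded and continuous, this gives $Y_{n_{j_\ell}} = \int f\, d\mu_{n_{j_\ell}} \to \int f\, d\mu$ on $\Omega_0$, i.e.\ almost surely. This produces the required further subsequence, so $Y_n \to \int f\, d\mu$ in probability, which is exactly the statement of weak convergence in probability.

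The one genuinely delicate point is the simultaneous extraction: the hypothesis only provides convergence in probability separately for each moment $k$, whereas I need a single common subsequence along which \emph{all} moments converge almost surely at once. The diagonalization over the countable index set $\{0,1,2,\dots\}$, together with the fact that a countable intersection of almost-sure events is itself almost sure, is precisely what resolves this. Everything else — the measurability of the maps $\omega \mapsto \int f\, d\mu_n(\omega)$ and the pathwise appeal to Lemma \ref{moments lemma} — is routine.
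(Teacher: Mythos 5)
Your proof is correct and follows essentially the same route as the paper's: reduce to the subsequence criterion for convergence in probability, extract (by nested extraction/diagonalization) a further subsequence along which all moments converge almost surely simultaneously, then apply the deterministic moment method (Lemma \ref{moments lemma}) pathwise and integrate $f$ against the weak limit. Your explicit treatment of the diagonal step is a slightly more careful rendering of the paper's ``continue inductively,'' but the argument is the same.
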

\begin{proof}This is a standard exercise, see Appendix. 
	
\end{proof}
\noindent As mentioned before, when we perform the moment computation for the truncated graph, we will need to consider the degree sequence of trees. The following lemma gives the number of trees with a given degree sequence (see Lemma 1 in \cite{Moon}): 
\begin{lemma}\label{moon}If we have positive integers $d_1,\ldots, d_{k+1}$ such that $\sum d_i = 2k$, then the number of trees $T$ on $k+1$ vertices with degree sequence equal to $(d_1,\ldots, d_{k+1})$ is given by $${k-1 \choose d_1-1, \ldots, d_{k+1}-1}.$$
\end{lemma}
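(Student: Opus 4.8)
The plan is to prove this via the Prüfer correspondence. Write $n = k+1$, so that the assertion is that the number of labeled trees on the vertex set $[n]$ in which vertex $i$ has degree $d_i$ equals $\binom{n-2}{d_1-1,\dots,d_n-1}$. Note first that since a tree on $n$ vertices has $n-1$ edges, the hypothesis $\sum_i d_i = 2k = 2(n-1)$ is exactly the handshake identity, and moreover $\sum_i (d_i-1) = 2(n-1)-n = n-2 = k-1$, so the multinomial coefficient is well defined.

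First I would recall the Prüfer encoding: given a labeled tree $T$ on $[n]$ with $n \ge 2$, repeatedly delete the leaf with the smallest label and record the label of its unique neighbor; after $n-2$ deletions a single edge remains, and the recorded labels form a word $\pi(T) \in [n]^{\,n-2}$. The standard fact is that $\pi$ is a bijection from the set of labeled trees on $[n]$ onto $[n]^{\,n-2}$; the inverse reconstructs $T$ by greedily, at each step, joining the current first entry of the word to the smallest label that neither has been used already nor occurs in the remaining tail. I would either include this verification in an appendix or cite it, since it is textbook material.

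The key step is the multiplicity identity: for every vertex $v$, the number of occurrences of $v$ in $\pi(T)$ equals $d(v,T) - 1$. To see this, note that the label $v$ is recorded exactly on those steps at which a neighbor of $v$ is the deleted leaf, i.e.\ exactly when one of the edges at $v$ is removed while $v$ itself survives; since $v$ is deleted at most once during the whole process (exactly once, unless $v$ is one of the two vertices left at the end), a short bookkeeping argument shows that in every case the number of such steps is $d(v,T)-1$. Granting this, $T$ has degree sequence $(d_1,\dots,d_n)$ if and only if $\pi(T)$ is a word of length $n-2$ in which the symbol $i$ appears exactly $d_i-1$ times, and the number of such words is precisely $\binom{n-2}{d_1-1,\dots,d_n-1}$, which is the claim for $n=k+1$. (An alternative is induction on $k$, removing a leaf and summing over the vertex it was attached to, but the Prüfer argument is cleaner.)

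I expect the only genuine content to be the multiplicity identity, together with the bijectivity of $\pi$ if one does not simply cite it; both are entirely standard, so the main obstacle is purely expository — keeping the leaf-deletion bookkeeping clean, in particular handling the two vertices that survive to the final step. Once those are in place, the conclusion is just the elementary count of words with prescribed letter frequencies.
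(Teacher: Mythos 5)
Your proof is correct, but note that the paper itself does not prove this lemma at all: it is imported verbatim as Lemma 1 of \cite{Moon}, so there is no internal argument to compare against. What you give is the classical self-contained proof via the Pr\"ufer correspondence, and all the pieces are right: $\sum_i(d_i-1)=k-1$ so the multinomial coefficient makes sense, the bijection between labeled trees on $[k+1]$ vertices and words of length $k-1$ is standard, and your multiplicity identity (vertex $v$ occurs exactly $d(v,T)-1$ times in the code, whether or not $v$ survives to the final edge) is exactly the bookkeeping needed to turn the bijection into the stated count of words with prescribed letter frequencies. The only point deserving care if you write out the inverse construction rather than citing it is that the leaf removed at a given step is the smallest unused label not occurring in the word from the \emph{current} position onward (the current entry included); with that convention the reconstruction is the usual one. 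In short: the paper buys brevity by citation, your route buys self-containedness at the cost of reproducing a textbook bijection, and either is acceptable here since the lemma is used only as an input to the Carleman-condition estimate in Lemma \ref{unique}.
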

\section{Tools from Linear Algebra}
\noindent In this section we present the linear algebra tools we will be using throughout the text. 
\begin{theorem}[Cauchy's interlacing theorem]\label{interlacing}Let $A$ be an $n\times n$ symmetric matrix, and let $B$ be an $m\times m$ (with $m<n$) principal submatrix (that is, $B$ is obtained by deleting columns and their corresponding rows). Suppose $A$ has eigenvalues $\lambda_n\leq \ldots \leq \lambda_1$ and $B$ has eigenvalues $\beta_m\leq \ldots \leq \beta_1$. Then, 
	$$
	\lambda_{m-k+1} \geq \beta_{m-k+1} \geq \lambda_{n-k+1}
	$$for $k = 1, \ldots, m$. 
	\end{theorem}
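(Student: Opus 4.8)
My plan is to deduce the statement from the Courant--Fischer min--max characterization of eigenvalues, which I will take as known (it follows from the spectral theorem by a short dimension count). Recall that for a symmetric $n\times n$ matrix $A$ with eigenvalues $\lambda_1\ge\cdots\ge\lambda_n$, writing $R_A(x)=x^{\top}Ax/x^{\top}x$ for the Rayleigh quotient, one has for each $j$
$$\lambda_j=\max_{\dim V=j}\ \min_{0\neq x\in V}R_A(x)=\min_{\dim V=n-j+1}\ \max_{0\neq x\in V}R_A(x),$$
the extrema ranging over linear subspaces $V\subseteq\mathbb{R}^n$. Reindexing the conclusion of the theorem by $j=m-k+1$ (so that $n-k+1=n-m+j$), the claim becomes: $\lambda_j\ge\beta_j\ge\lambda_{n-m+j}$ for every $j=1,\dots,m$.

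First I would reduce to the case $B=A[\{1,\dots,m\}]$. Indeed, we may relabel coordinates so that the rows and columns defining $B$ come first; this replaces $A$ by $P^{\top}AP$ for a permutation matrix $P$, which leaves the eigenvalues of $A$ unchanged and leaves $B$ unchanged. With this normalization the map $\iota\colon\mathbb{R}^m\to\mathbb{R}^n$, $x\mapsto(x,0)$, is a linear isometry onto the coordinate subspace $W=\{y:y_{m+1}=\cdots=y_n=0\}$, and crucially $\iota(x)^{\top}A\,\iota(x)=x^{\top}Bx$, so $R_A(\iota(x))=R_B(x)$ for all $x\neq 0$.

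For the upper bound $\beta_j\le\lambda_j$: choose $V^{\ast}\subseteq\mathbb{R}^m$ with $\dim V^{\ast}=j$ attaining the outer maximum in the first formula for $\beta_j$, so $\beta_j=\min_{0\neq x\in V^{\ast}}R_B(x)$. Then $\iota(V^{\ast})$ is a $j$-dimensional subspace of $\mathbb{R}^n$, and using it as a test subspace in the first formula for $\lambda_j$ gives $\lambda_j\ge\min_{0\neq y\in\iota(V^{\ast})}R_A(y)=\min_{0\neq x\in V^{\ast}}R_B(x)=\beta_j$. For the lower bound $\beta_j\ge\lambda_{n-m+j}$: choose $U^{\ast}\subseteq\mathbb{R}^m$ with $\dim U^{\ast}=m-j+1$ attaining the outer minimum in the second formula for $\beta_j$, so $\beta_j=\max_{0\neq x\in U^{\ast}}R_B(x)$. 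Since $m-j+1=n-(n-m+j)+1$, the subspace $\iota(U^{\ast})\subseteq\mathbb{R}^n$ has exactly the dimension admissible in the second formula for $\lambda_{n-m+j}$, hence $\lambda_{n-m+j}\le\max_{0\neq y\in\iota(U^{\ast})}R_A(y)=\beta_j$. Combining the two bounds and unwinding $j=m-k+1$ yields $\lambda_{m-k+1}\ge\beta_{m-k+1}\ge\lambda_{n-k+1}$ for $k=1,\dots,m$.

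There is no deep obstacle here; the only thing demanding care is the bookkeeping of subspace dimensions — matching the dimensions $j$ and $m-j+1$ appearing in the min--max problems for $B$ with the dimensions $j$ and $n-(n-m+j)+1$ for $A$, so that in each case the pushed-forward subspace $\iota(V^{\ast})$ or $\iota(U^{\ast})$ is legitimately admissible. If one wanted a fully self-contained argument avoiding Courant--Fischer, the alternative is to first prove the single-deletion case $m=n-1$ directly from the spectral theorem (intersecting $W$ with a span of eigenvectors of $A$ and counting dimensions to trap $R_B$ in the right range) and then iterate the deletion $n-m$ times; but the variational route above is shorter and cleaner.
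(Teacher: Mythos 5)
Your proof is correct. The paper itself states Cauchy's interlacing theorem as a standard tool from linear algebra and gives no proof of it, so there is nothing internal to compare against; your argument via the Courant--Fischer min--max characterization is the standard textbook route. The key points all check out: the reindexing $j=m-k+1$, $n-k+1=n-m+j$ is right; the reduction to $B=A[\{1,\dots,m\}]$ by conjugating with a permutation matrix is legitimate; the identity $R_A(\iota(x))=R_B(x)$ holds because $\iota$ is an isometric embedding onto a coordinate subspace; and the dimension bookkeeping is exact, since a $j$-dimensional test subspace is admissible in the max--min formula for $\lambda_j$ and an $(m-j+1)$-dimensional one is admissible in the min--max formula for $\lambda_{n-m+j}$ (as $m-j+1=n-(n-m+j)+1$). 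This fully establishes $\lambda_{m-k+1}\ge\beta_{m-k+1}\ge\lambda_{n-k+1}$ for $k=1,\dots,m$, which is all the paper uses (in the proof of Theorem \ref{theorem:limit}, to get $\dist(\mu_{m,n},\mu_{\ve,m,n})\le\ve$).
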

\begin{theorem}[Weyl's inequality]Let $A, B$ be $n\times n$ symmetric matrices with $\|B\| \leq \delta$. Let $A' = A+B$. Then: $$|\lambda_i(A)-\lambda_i(A')|\leq \delta$$
	for all $i$. (Again we assume that the eigenvalues are ordered increasingly.)
\end{theorem}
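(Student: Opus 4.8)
The plan is to deduce this from the Courant--Fischer min--max characterization of the eigenvalues, which is the textbook route. For an $n\times n$ symmetric matrix $M$ with eigenvalues ordered increasingly, $\lambda_1(M)\le\cdots\le\lambda_n(M)$, recall that
$$\lambda_i(M)=\min_{\substack{V\subseteq\mathbb R^n\\ \dim V = n-i+1}}\ \max_{\substack{x\in V\\ \|x\| = 1}} x^{\top} M x .$$
The first step is to record the elementary fact that, since $B$ is symmetric, its operator norm satisfies $\|B\| = \sup_{\|x\|=1}|x^{\top}Bx|$, so that $|x^{\top}Bx|\le\|B\|\le\delta$ for every unit vector $x$. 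Since $A'=A+B$, this gives the pointwise estimate
$$x^{\top}Ax-\delta\ \le\ x^{\top}A'x\ \le\ x^{\top}Ax+\delta$$
valid for all unit vectors $x$.

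The second step is to propagate this pointwise inequality through the two optimizations. Fixing a subspace $V$ with $\dim V = n-i+1$ and maximizing the right-hand inequality over unit $x\in V$ yields $\max_{x\in V,\,\|x\|=1} x^{\top}A'x \le \max_{x\in V,\,\|x\|=1} x^{\top}Ax + \delta$; minimizing over all such $V$ then gives $\lambda_i(A')\le\lambda_i(A)+\delta$. Applying the same argument to the decomposition $A = A' + (-B)$, with $\|-B\| = \|B\|\le\delta$, gives the reverse bound $\lambda_i(A)\le\lambda_i(A')+\delta$, and combining the two yields $|\lambda_i(A)-\lambda_i(A')|\le\delta$ for all $i$.

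An alternative route I would keep in reserve, which avoids min--max and uses only a dimension count (in the same spirit as the interlacing theorem above), is the following: let $u_1,\dots,u_n$ and $w_1,\dots,w_n$ be orthonormal eigenbases of $A$ and $A'$ for the increasingly ordered eigenvalues, and note that $\mathrm{span}(u_1,\dots,u_i)$ has dimension $i$ while $\mathrm{span}(w_i,\dots,w_n)$ has dimension $n-i+1$, so the two subspaces share a nonzero vector $z$, which we normalize. Then $z^{\top}Az\le\lambda_i(A)$ because $z$ lies in the span of the bottom $i$ eigenvectors of $A$, while $z^{\top}A'z\ge\lambda_i(A')$ because $z$ lies in the span of the top $n-i+1$ eigenvectors of $A'$; hence $\lambda_i(A')\le z^{\top}Az + z^{\top}Bz \le \lambda_i(A)+\delta$, and a symmetric choice of subspaces gives the other inequality.

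Since this is a classical inequality, I do not anticipate a genuine obstacle. The only points that need care are fixing the ordering convention in the min--max formula so that the two one-sided bounds come out with the correct signs, and using that $\|B\|$ denotes the spectral norm, which is precisely what makes the quadratic-form bound $|x^{\top}Bx|\le\|B\|$ hold; with a matrix norm of a different flavour the statement would fail. Given that the min--max principle is in any case needed here, one clean option is to present Weyl's inequality as a formal corollary of Courant--Fischer, so that this linear-algebra section stays self-contained.
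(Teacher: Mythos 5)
The paper does not prove this statement at all: Weyl's inequality is quoted in the ``Tools from Linear Algebra'' section as a classical fact (alongside Cauchy interlacing), so there is no argument of the authors' to compare yours against. Your proposal is a correct and entirely standard proof, and either of your two routes (Courant--Fischer, or the dimension-count with the two eigenbases) would make the section self-contained. One small point to fix before including it: the variational formula you wrote, $\lambda_i(M)=\min_{\dim V=n-i+1}\max_{x\in V,\,\|x\|=1}x^{\top}Mx$, is the one attached to the \emph{non-increasing} labelling $\lambda_1\ge\cdots\ge\lambda_n$ (for the increasing labelling the correct dimension is $\dim V=i$). This does not damage the argument, since you apply the identical formula to $A$ and to $A'$ and the pointwise bound $|x^{\top}Bx|\le\|B\|\le\delta$ passes through the optimization unchanged, so the conclusion $|\lambda_i(A)-\lambda_i(A')|\le\delta$ holds for every $i$ under either convention; but the stated formula and the stated ordering should be made consistent (the paper itself is loose here, labelling eigenvalues non-increasingly in the text while the theorem statement says ``ordered increasingly''). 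Your second, dimension-count argument is stated with the correct index bookkeeping as is.
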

\begin{lemma} Let $A$ be an $n\times n $ matrix with non-negative entries. Then for any positive $\{c_i\}$, $i=1, \dots, n$, 
\begin{align}\label{trick} \|A\| \leq  \max_{1\leq i\leq n} \left\lbrace \frac{1}{c_i}\sum_{j=1}^n c_j a_{ij}\right\rbrace.\end{align}
\end{lemma}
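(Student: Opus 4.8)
The plan is to reduce the asserted bound on $\|A\|$ to the elementary fact that the spectral radius of a matrix is dominated by any of its operator norms, after conjugating by the diagonal matrix built from the $c_i$. Set $D = \mathrm{diag}(c_1,\dots,c_n)$, which is invertible because every $c_i$ is positive, and put $B := D^{-1}AD$. Since the $a_{ij}$ are non-negative and the $c_j$ positive, the entries $B_{ij} = c_i^{-1}a_{ij}c_j$ are non-negative, and the $i$-th row sum of $B$ is exactly $\frac{1}{c_i}\sum_{j} c_j a_{ij}$. Thus the right-hand side of the claimed inequality is precisely $\max_i \sum_j |B_{ij}|$, the operator norm of $B$ induced by the $\ell^{\infty}$ vector norm, which I will write $\|B\|_\infty$.

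Then I would carry out two short steps. First, $B$ is similar to $A$, so they share the same eigenvalues and hence the same spectral radius, $\rho(A)=\rho(B)$. Second, for any matrix $M$ and any operator norm, $\rho(M)\le\|M\|$: if $Mv=\mu v$ with $v\ne 0$ then $|\mu|\,\|v\|_\infty = \|Mv\|_\infty \le \|M\|_\infty\,\|v\|_\infty$. Applying this to $M=B$ and combining, $\rho(A)=\rho(B)\le\|B\|_\infty = \max_i \frac{1}{c_i}\sum_j c_j a_{ij}$. Since the matrices to which this lemma is applied are symmetric, $\|A\|=\rho(A)$, which finishes the argument.

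If one prefers a self-contained route that does not invoke $\|A\|=\rho(A)$, one can argue directly with a Perron-type eigenvector. For symmetric $A$ with non-negative entries, $\|A\|=\lambda_{\max}(A)$: if $v$ is a unit vector maximizing the Rayleigh quotient, then so is its coordinatewise absolute value $|v|$, because $a_{ij}\ge 0$ gives $|v|^{\top}A|v|\ge v^{\top}Av$; the same computation applied to a minimizer shows $|\lambda_{\min}(A)|\le\lambda_{\max}(A)$. Hence there is an eigenvector $w\ge 0$, $w\ne 0$, with $Aw=\|A\|w$. Choosing $i^{*}$ to maximize $w_i/c_i$ over the nonempty set $\{i : w_i>0\}$ and reading off coordinate $i^{*}$ of $Aw=\|A\|w$,
\[
\|A\|\,w_{i^{*}} = \sum_{j=1}^n a_{i^{*}j} w_j = \sum_{j=1}^n a_{i^{*}j} c_j\cdot\frac{w_j}{c_j} \le \frac{w_{i^{*}}}{c_{i^{*}}}\sum_{j=1}^n a_{i^{*}j} c_j ,
\]
and dividing by $w_{i^{*}}>0$ gives $\|A\|\le \max_i \frac{1}{c_i}\sum_j c_j a_{ij}$.

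There is no substantial obstacle here; this is a standard tool. The two points needing a little care are that the leading eigenvector can be taken entrywise non-negative — which is exactly where the hypothesis $a_{ij}\ge 0$ enters — and that, in the choice of $i^{*}$, the indices $j$ with $w_j=0$ must be handled separately (for such $j$ one has $w_j/c_j = 0 \le w_{i^{*}}/c_{i^{*}}$, so the estimate above is unaffected).
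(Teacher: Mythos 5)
Your first argument is exactly the paper's proof: conjugate by the diagonal matrix $\mathrm{diag}(c_1,\dots,c_n)$, observe the conjugate is still non-negative with row sums equal to the claimed bound, and use that the spectral radius of a non-negative matrix is at most its maximum row sum. Your extra care about $\|A\|=\rho(A)$ (via symmetry, or the self-contained Perron-type argument) is a sound and slightly more complete treatment of a point the paper glosses over, but the approach is essentially the same.
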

\begin{proof}Let $C$ be a diagonal matrix with entries $\{c_1,c_2,\ldots, c_n\}$. Both $A$ and $C^{-1}AC$ have the same eigenvalues. Since $c_i>0$ and $A$ is non-negative, then $C^{-1}AC$ is also non-negative. For a non-negative matrix, the largest eigenvalue is upper bounded by the max row sum, which implies the result. 
\end{proof}

\begin{theorem}[Davis-Kahan, see Theorem 1 in \cite{YWS}]\label{dk}Let $A$ and $B$ be symmetric $n\times n$ matrices. Let $A = B + C$. Denote by $v_i(B), v_i(A)$ the eigenvectors corresponding to $\lambda_i(B)$ and $\lambda_i(A)$. Then, 

$$
\sin(\angle(v_i(B), v_i(A)) \leq \frac{\|C\|}{\min(|\lambda_{i-1}(A) -\lambda_i(B)|, |\lambda_{i+1}(A) - \lambda_i(B)|)}.
	$$
\end{theorem}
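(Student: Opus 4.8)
The plan is to prove this by the standard Rayleigh-quotient argument for $\sin\Theta$-type perturbation bounds, after reducing everything to the single unit eigenvector $u := v_i(B)$ of $B$, so that $Bu = \lambda_i(B)\,u$ and $\|u\| = 1$. Write $\lambda := \lambda_i(B)$, $\theta := \angle(v_i(B), v_i(A))$, and
\[ \delta := \min\bigl(|\lambda_{i-1}(A) - \lambda|,\ |\lambda_{i+1}(A) - \lambda|\bigr), \]
with the usual conventions $\lambda_0(A) = +\infty$ and $\lambda_{n+1}(A) = -\infty$ when $i = 1$ or $i = n$. We may assume $\delta > 0$, since otherwise the bound is vacuous.

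First I would record the algebraic identity that drives the whole proof: since $A = B + C$ and $Bu = \lambda u$,
\[ (A - \lambda I)\,u = (B + C - \lambda I)\,u = C u, \]
so that $\|(A - \lambda I)u\| \le \|C\|$. Then I would fix an orthonormal eigenbasis $w_1, \dots, w_n$ of $A$ with $A w_j = \lambda_j(A)\,w_j$ and $w_i = \pm v_i(A)$, and expand $u = \sum_j c_j w_j$ with $\sum_j c_j^2 = 1$. Since $\cos\theta = |\langle u, v_i(A)\rangle| = |c_i|$, we get $\sin^2\theta = \sum_{j \ne i} c_j^2$, while
\[ \|C\|^2 \ge \|(A - \lambda I)u\|^2 = \sum_{j} c_j^2\,(\lambda_j(A) - \lambda)^2 \ge \sum_{j \ne i} c_j^2\,(\lambda_j(A) - \lambda)^2. \]
Hence the theorem reduces to the separation claim $|\lambda_j(A) - \lambda| \ge \delta$ for every $j \ne i$: granting it, the last display is at least $\delta^2 \sin^2\theta$, and dividing by $\delta^2$ and taking square roots gives $\sin\theta \le \|C\|/\delta$.

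The separation claim is the one genuinely delicate point, because a priori $\lambda = \lambda_i(B)$ need not lie between $\lambda_{i+1}(A)$ and $\lambda_{i-1}(A)$. I would handle it by a two-case split. If $\delta \le \|C\|$, then $\|C\|/\delta \ge 1 \ge \sin\theta$ and there is nothing to prove. If $\delta > \|C\|$, I would invoke Weyl's inequality, which gives $|\lambda_i(A) - \lambda_i(B)| \le \|C\| < \delta$; combined with $|\lambda_{i-1}(A) - \lambda| \ge \delta$ and $\lambda_{i-1}(A) \ge \lambda_i(A)$ this forces $\lambda < \lambda_{i-1}(A)$ (if instead $\lambda \ge \lambda_{i-1}(A)$, then $|\lambda_i(A) - \lambda| \ge |\lambda_{i-1}(A) - \lambda| \ge \delta$, contradicting $|\lambda_i(A) - \lambda| < \delta$), and symmetrically $\lambda > \lambda_{i+1}(A)$. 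So $\lambda_{i+1}(A) < \lambda < \lambda_{i-1}(A)$, and monotonicity of the eigenvalues then gives, for $j < i$, $\lambda_j(A) \ge \lambda_{i-1}(A) > \lambda$, so $|\lambda_j(A) - \lambda| \ge \lambda_{i-1}(A) - \lambda \ge \delta$; and for $j > i$, $\lambda_j(A) \le \lambda_{i+1}(A) < \lambda$, so $|\lambda_j(A) - \lambda| \ge \lambda - \lambda_{i+1}(A) \ge \delta$. The same Weyl estimates also give $\lambda_{i-1}(B) > \lambda_i(B) > \lambda_{i+1}(B)$ and $\lambda_{i-1}(A) > \lambda_i(A) > \lambda_{i+1}(A)$, so $v_i(B)$ and $v_i(A)$ are well-defined up to sign and $\theta$ is meaningful. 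I expect this case analysis — in particular, pinning down why the gap measured against the spectrum of $A$ may legitimately stand in for the more classical gap in the spectrum of $B$ — to be the main obstacle; the rest is routine. As a fallback I would keep in reserve the cleaner variant with $\delta' := \min(|\lambda_{i-1}(B) - \lambda_i(B)|, |\lambda_{i+1}(B) - \lambda_i(B)|)$ in the denominator, where the sandwiching is automatic, and then convert to the stated form by absorbing the additive $\|C\|$ corrections coming from Weyl's inequality.
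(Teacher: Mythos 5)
Your proof is correct and complete. Note that the paper does not prove this statement at all: it is imported as a black box with a citation to Yu--Wang--Samworth, so there is no internal argument to compare against. Your derivation is the standard residual argument for the classical Davis--Kahan $\sin\theta$ bound in the ``hybrid'' form stated here (gap measured between $\lambda_i(B)$ and the neighboring eigenvalues of $A$): the identity $(A-\lambda I)u=Cu$, the expansion of $u$ in an eigenbasis of $A$, and the reduction to the separation claim $|\lambda_j(A)-\lambda|\ge\delta$ for $j\ne i$ are all exactly right, and you correctly identified that this separation is the only delicate point. Your handling of it is sound: the case $\delta\le\|C\|$ is vacuous since $\sin\theta\le 1$, and in the case $\delta>\|C\|$ Weyl's inequality pins $\lambda_i(A)$ within distance $\delta$ of $\lambda$, which together with $|\lambda_{i\pm1}(A)-\lambda|\ge\delta$ and monotonicity forces $\lambda_{i+1}(A)<\lambda<\lambda_{i-1}(A)$, from which the separation for all $j\ne i$ follows. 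Incidentally, what you have proved is closer to the original Davis--Kahan statement (recalled in the introduction of the cited reference) than to Theorem 1 of that reference, whose gap is measured within a single spectrum and carries an extra factor of $2$; since the paper's applications only use the form quoted here, your argument fully justifies the tool as used.
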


\section{Tools from Probability} 

\noindent In this section we state the probability bounds for graph counts that we will be using throughout the paper. Since we obtain $G_{m,n}$ by collapsing points in $G_{1,mn}$, it is natural to first study the probability of a labeled graph to appear in $G_{1,mn}$. Fortunately, this part was already done in \cite{Bollobas}:
\begin{theorem}[Theorem 13 in \cite{Bollobas}]\label{bt}Let $G_{1,n}$ be a random PA graph. Let $S$ be a labeled graph with out-degree of every vertex at most $1$. Define $C_S(t)$ to be the number of edges $(i,j)$ in $S$ such that $i\leq t$ and $t\leq j$. Then: 
	$$
	\BP[S] = \left(\prod_{i \in V^-(S)} d_{in}(i , S)\right) \left( \prod_{i \in V^+(S)} \frac{1}{2i-1}\right) \left( \prod_{i \not \in V^+(H)} \left( 1 + \frac{C_S(i)}{2i-1}\right)\right)
	$$Furthermore, 
	\begin{equation}\label{prob}
	\BP[S] =  \left(\prod_{i \in V^-(S)} d_{in}(i , S)!\right) \left(\prod_{(i,j) \in E(S)} \frac{1}{2\sqrt{ij}}\right) \exp\left( O \left( \sum_{ i \in V(S)} C_S(i)^2 / i\right)\right).
	\end{equation}
\end{theorem}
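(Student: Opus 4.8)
The plan is to prove the first (exact) identity by revealing the edges of $G_{1,n}$ one at a time, and then to obtain the asymptotic identity from it by routine estimates. Since every vertex of $S$ has out-degree at most one, for each $t\in V^+(S)$ let $\pi(t)$ denote its unique out-neighbour in $S$ (allowing $\pi(t)=t$, a loop). The event $\{S\subset G_{1,n}\}$ is then exactly $\bigcap_{t\in V^+(S)}\{X_t=\pi(t)\}$, and the chain rule, applied in the order $t=1,2,\dots,n$, gives
\[
\BP[S\subset G_{1,n}]=\prod_{t\in V^+(S)}\BP\big[X_t=\pi(t)\ \big|\ X_s=\pi(s)\text{ for all }s\in V^+(S),\ s<t\big].
\]
By the definition of the process this conditional probability equals $\tfrac1{2t-1}$ when $\pi(t)=t$, and $\tfrac1{2t-1}\,\BE\big[d(\pi(t),G_{1,t-1})\mid\text{conditioning}\big]$ when $\pi(t)<t$.

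The core of the argument is the computation of these conditional expected degrees. Fix a vertex $v$ and a set $F$ of already-forced times. At a step $k\notin F$ one has $\BE[d(v,G_{1,k})\mid G_{1,k-1}]=d(v,G_{1,k-1})\cdot\tfrac{2k}{2k-1}$, while a forced step $s\in F$ changes the degree by the deterministic amount $\mathbb{I}[\pi(s)=v]$. Hence $d(v,G_{1,k})\big/\prod_{\ell\in[v,k]\setminus F}\tfrac{2\ell}{2\ell-1}$ is a martingale except for deterministic jumps at the steps in $F$, and unwinding it yields an exact formula for $\BE[d(v,G_{1,t})\mid\{X_s=\pi(s):s\in F\}]$ as a sum of products of factors $\tfrac{2k}{2k-1}=1+\tfrac1{2k-1}$ over the free steps $k$: one product for the birth of $v$ and one for each forced $s>v$ with $\pi(s)=v$. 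Substituting this into the chain-rule product and processing the in-edges of a vertex $v$ in increasing order of their sources $j^{(1)}<\dots<j^{(D)}$, the successive conditional probabilities for $v$ carry numerators $1,2,\dots,D$ (times the accumulated free-step factors), so $v$ contributes the combinatorial weight $\prod_{v\in V^-(S)}d_{in}(v,S)!$; the denominators collect into $\prod_{t\in V^+(S)}\tfrac1{2t-1}$; and the free-step factors collect, using the identity that a time $k\notin V^+(S)$ is crossed by exactly $C_S(k)$ edges of $S$, into $\prod_{k\notin V^+(S)}(1+\tfrac{C_S(k)}{2k-1})$. Equivalently, one can run this as an induction on the largest out-vertex $j$ of $S$: delete the edge $(\pi(j),j)$ to obtain $S'$, write $\BP[S\subset G_{1,n}]=\tfrac1{2j-1}\BE[\mathbb{I}_{S'\subset G_{1,j-1}}\,d(\pi(j),G_{1,j-1})]$, expand $d(\pi(j),G_{1,j-1})$ as a sum of edge-indicators, recognise each resulting term as $\BP[S']$ or as $\BP[S'\cup\{(\pi(j),w)\}]$ for a new edge, apply the inductive hypothesis, and check that the resulting sum telescopes to the claimed expression.

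For the asymptotic form one starts from the exact identity and writes $\tfrac1{2i-1}=\tfrac1{2i}\big(1+O(1/i)\big)$ and $1+\tfrac{C_S(i)}{2i-1}=\exp\!\big(\tfrac{C_S(i)}{2i}+O(C_S(i)^2/i^2)+O(C_S(i)/i^2)\big)$, using $\sum_{i=a}^{b}\tfrac1{2i}=\tfrac12\log(b/a)+O(1/a)$. Each edge $(a,b)\in E(S)$ then contributes $\tfrac1{2b}$ (from its out-vertex $b$) times $\exp\!\big(\sum_{a\le i<b}\tfrac1{2i}+(\text{errors})\big)=\sqrt{b/a}\,(1+\cdots)$, and $\tfrac1{2b}\sqrt{b/a}=\tfrac1{2\sqrt{ab}}$; all error terms, together with the $O(1)$ contributions coming from the (boundedly many) vertices of $V^+(S)$ lying inside the spans of other edges, are absorbed into $\exp\!\big(O\big(\sum_{i\in V(S)}C_S(i)^2/i\big)\big)$, which gives the second formula. (An alternative route to the exact identity is through the linearized chord diagram representation of $G_{1,n}$, where $\BP[S\subset G_{1,n}]$ becomes a ratio of matching counts; but the sequential argument is closer to the shape of the answer.)

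The delicate point is the bookkeeping in the assembling step: one must track precisely how $V^+(S)$, $V^-(S)$, the multiplicities $d_{in}(\cdot,S)$, and the counts $C_S(\cdot)$ change under the deletion or insertion of an edge, verify the telescoping identity that turns the sum of $\BP[S'\cup\{(v,w)\}]$'s back into a single product, and handle the boundary cases by hand -- a loop at $t$ puts $t$ in $V^+(S)$ but carries only the factor $\tfrac1{2t-1}$ and no $d_{in}$-weight, and vertex $1$ is special because $X_1=1$ is forced, so that $d(1,G_{1,t})$ starts from a deterministic $2$ rather than from $1+\mathbb{I}[X_1=1]$; one checks that these conventions are consistent with the stated formula. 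The asymptotic step is then routine, provided one is careful that the $\exp(O(\cdot))$ error is wide enough to swallow the $O(1/i)$-type corrections and the bounded contributions of out-vertices lying in other edges' spans.
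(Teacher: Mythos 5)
The paper does not prove this statement at all --- it is imported verbatim as Theorem 13 of \cite{Bollobas} --- so your attempt can only be measured against a correct derivation, and there it has a real gap. The chain-rule set-up and the final passage from the exact identity to the asymptotic form \eqref{prob} are fine, but the central step is not. You assert that, conditionally on the forced events $\{X_s=\pi(s):s\in F\}$, the degree of $v$ evolves like the unconditioned process except for deterministic unit jumps at the forced times, so that its conditional expectation is obtained by multiplying the free-step factors $\frac{2\ell}{2\ell-1}$ and adding one unit per forced in-edge. That is false: conditioning on the past forced event $X_s=v$ size-biases the degree, since $\BE[d(v,G_{1,s-1})\mid X_s=v]=\BE[d(v,G_{1,s-1})^2]/\BE[d(v,G_{1,s-1})]$, so under the conditional measure the steps before $s$ no longer follow the original attachment rule and the ``martingale with deterministic jumps'' unwinding is invalid. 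The correct bookkeeping must carry rising-factorial moments: $\BE[D(D+1)\cdots(D+r-1)]$ is multiplied by exactly $1+\frac{r}{2t-1}$ at each free step $t$, and each forced in-edge of $v$ raises $r$ by one; it is precisely this mechanism that generates the weight $\prod_{v\in V^-(S)}d_{in}(v,S)!$. Your recipe gives a genuinely different number: for $S=\{(v,a),(v,b)\}$ with $v\ll a\ll b$ it produces roughly $\frac{1}{4v\sqrt{ab}}$, while the true probability is about $\frac{2}{4v\sqrt{ab}}$; the lost factor $2=2!$ is exactly the edge-correlation the paper later relies on (its remark that $\BP[(a,b),(a,c)]\approx 2\,\BP[(a,b)]\,\BP[(a,c)]$, and its refutation of the independence heuristic of \cite{PJ}).

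Two further points. First, your claim that the successive conditional probabilities at $v$ carry numerators $1,2,\dots,D$ ``times the accumulated free-step factors'' does not follow even from your own unwinding (a unit of degree gained at time $j^{(1)}$ grows only by the free-step factors after $j^{(1)}$, not by those going back to the birth of $v$), and it is inconsistent with the first displayed identity you set out to prove, which carries $\prod_i d_{in}(i,S)$ rather than $\prod_i d_{in}(i,S)!$. In fact the first display as printed cannot be exactly right: for $S=\{(1,2),(1,3),(1,4)\}$ the true probability is $\frac23\cdot\frac35\cdot\frac47=\frac{8}{35}$, whereas the display evaluates to $\frac{4}{35}$; the factorial version is the correct exact formula and is the one consistent with \eqref{prob}, which is all the paper uses. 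Second, the alternative you mention in passing --- induction on the largest out-vertex, expanding $d(\pi(j),G_{1,j-1})$ into edge indicators and checking a telescoping identity --- is a workable route to the exact formula, but the telescoping verification you defer is the entire content of the proof; as written, the argument establishes neither displayed identity.
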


\noindent We will be using only the second statement of the above theorem in this paper. Note that in particular when $S=(u,v)$, we obtain $\BP[S] \approx 1/2\sqrt{uv}$. Hence, if $S$ has more than one edge, then the second factor is exactly what one would obtain if the edges appeared independently. The third factor is a correction factor which tends to one. The first factor captures the correlation among the edges in the preferential attachment model. For instance if we have $a<b<c$, then we would obtain $\BP[(a,b),(a,c)] \approx 2 \BP[(a,b)] \BP[(a,c)]$ (since the in-degree of $a$ is two). 
\\
\\
Given an ordered graph $H$, and a subset of vertices $V$, we could have multiple copies of $H$ in the vertex set $V$. For example: 
\begin{example} Let $H$ be a triangle, then the total number of copies of $H$ that can occur in a given triplet of vertices $\{a,b,c\}$ is $m^5(m-1)$. To see this, note that to obtain a triangle in $G_{m,n}$ we must have points $a_1,a_2\in [(a-1)m+1,am]$, $b_1,b_2\in [(b-1)m+1,bm]$ and $c_1,c_2\in [(c-1)m+1,cm]$ such that during the construction of $G_{1,mn}$ we joined points $(a_1,b_1), (a_2,c_1)$ and $(b_2,c_2)$. We have $m^2$ choices to choose $a_1$ and $a_2$ since they could be the same point, and similarly for $b_1,b_2$, but since the out-degree in $G_{1,mn}$ is $1$, then we must have $c_1\neq c_2$ giving $m(m-1)$ options.
\end{example}
\noindent Another ingredient one needs to keep in mind when performing graph counts is that all the $m^5(m-1)$ triangles are not ``weighted" the same. As we mentioned before, we see that when $a_1=a_2$ we obtain an extra factor of two on the probability. Thus, to perform a graph count we need to keep in mind the number of ways the graph could have been obtained from the uncollapsed graph $G_{1,mn}$ and then assign it a weight according to the first factor in \eqref{prob}. As an illustration on how the count should be carried out, consider the following: 
\\
\\
\textbf{Example: }We want to compute the expected number of paths of length two in a random $G_{m,n}$ graph. There are a total of three non-isomorphic paths of length two (see Definition \ref{ordered}): 
\newline
\begin{center}
	\begin{tikzpicture}
	[acteur/.style={circle, fill=black!25, inner sep=0pt,minimum size=5pt,}] 
	\node (v1) at ( 0,0) [acteur][label=below:$v_1$]{};
	\node (v2) at ( 2,0)[acteur][label=below:$v_2$]{}; 
	\node (v3) at ( 4,0) [acteur][label=below:$v_3$]{}; 
	\draw (v1) -- (v2);
	%\draw (v1) -- (v3);
	\draw[line width=0.5pt] (v1) .. controls(2,0.5) .. (v3);
	\node (d)at (2,-0.5) [fill=white,draw=none][label=below:$H_1$]{};
	\end{tikzpicture} \qquad
	\begin{tikzpicture}
	[acteur/.style={circle, fill=black!25, inner sep=0pt,minimum size=5pt,}] 
	\node (v1) at ( 0,0) [acteur][label=below:$v_1$]{};
	\node (v2) at ( 2,0)[acteur][label=below:$v_2$]{}; 
	\node (v3) at ( 4,0) [acteur][label=below:$v_3$]{}; 
	\draw (v1) -- (v2);
	%\draw (v1) -- (v3);
	\draw(v2) -- (v3);
	\node (d)at (2,-0.5) [fill=white,draw=none][label=below:$H_2$]{};
	\end{tikzpicture} \qquad
	\begin{tikzpicture}
	[acteur/.style={circle, fill=black!25, inner sep=0pt,minimum size=5pt,}] 
	\node (v1) at ( 0,0) [acteur][label=below:$v_1$]{};
	\node (v2) at ( 2,0)[acteur][label=below:$v_2$]{}; 
	\node (v3) at ( 4,0) [acteur][label=below:$v_3$]{}; 
	\draw (v3) -- (v2);
	%\draw (v1) -- (v3);
	\draw[line width=0.5pt] (v1) .. controls(2,0.5) .. (v3);
	\node (d)at (2,-0.5) [fill=white,draw=none][label=below:$H_3$]{};
	\end{tikzpicture} 
\end{center}
We will count each one separately. Recall the notation from equations \eqref{g count} and \eqref{g count v}. For $H_1$, we have: 
\begin{align*}
\BE[X_{m,n}(H_1)] = \sum_{1\leq a< b< c\leq n} \BE[X_{m,n}(H_1, \{a,b,c\})]
\end{align*}
Let $A =[(a-1)m +1 , am]$, $B = [(b-1)m+1, bm]$ and $C= [(c-1)m+1,cm]$. Then all the possible contributions to $X_{m,n}(H_1,\{a,b,c\})$ come from having an edge $(a_1,b_1)$ from $B$ to $A$ and an edge $(a_2,c_1)$ from $C$ to $A$ (in the uncollapsed graph $G_{1,mn}$). If $a_1 = a_2$, then by Theorem \ref{bt}, the probability of $(a_1,b_1), (a_1,c_1)$ is given by: 
$$
2 \cdot \frac{1}{4 a_1\sqrt{b_1c_1}}\exp\left(O \left( \frac{1}{a_1} + \frac{4}{b_1} +\frac{1}{c_1}\right)\right)
$$which can be rewritten as: $$
2 \cdot \frac{1}{4m^2a\sqrt{bc}} \left(1 \pm O\left(\frac{1}{a}\right) \right)
$$There are $m$ choices for $a_1$, $b_1$, and $c_1$, so we obtain an extra factor of $m^3$ to arrive at: 
\begin{equation}\label{e1}
\frac{2m}{4a\sqrt{bc}}  \left(1 \pm O\left(\frac{1}{a}\right) \right)
\end{equation}
If however $a_1$ and $a_2$ are distinct, then we do not have a factor of $2$, and now there are $m(m-1)$ ways to choose them from $A$, so this case yields a contribution of 
\begin{equation}\label{e2}
\frac{m^3(m-1)}{4m^2 a\sqrt{bc}}  \left(1 \pm O\left(\frac{1}{a}\right) \right)
\end{equation}
Hence, by adding equations \eqref{e1} and \eqref{e2} we get:
\begin{equation}\label{e3}
\BE[X_{m,n}(H_1, \{a,b,c\})] = \frac{m(m+1)}{4a\sqrt{bc}}  \left(1 \pm O\left(\frac{1}{a}\right) \right)
\end{equation}Taking the sum over all $1\leq a<b<c\leq n$ gives: 
\begin{equation}\label{example}
\BE[X_{m,n}(H_1)] = (1\pm o(1)) \frac{m(m+1) n\log n}{2}
\end{equation}Similarly we can compute the contributions of $\BE[X_{m,n}(H_2)]$ and $\BE[X_{m,n}(H_3)]$, but they both turn out to be $O(n)$. Hence the expected number of paths of length two is given by: 
\begin{align}\label{p2}
(1\pm o(1)) \frac{m(m+1) n\log n}{2}.
\end{align}

\PRLsep

\noindent From the above example we see the importance of Definition \ref{ordered}. Since different orderings give completely different counts, it is natural to treat them separately. When we are concerned about the magnitude of the count, and do not care about the leading constant, the following theorem will give us a quick way to deal with such situations. 
\begin{theorem}\label{mag}Let $H$ be an ordered graph. Then: 
	\begin{equation}\label{count}
	\BE[X_{m,n}(H)]= O\left( n^{f(H)/2} \log^{g(H)}n \right)
	\end{equation}where $f(H)$ denotes the number of vertices of $H$ with degree one, and $g(H)$ denotes the number of vertices of $H$ of degree $2$. Moreover, if the ordering is such that $d(v_i, H) \geq d(v_{i+1}, H)$ then one can replace $O(\cdot)$ by $\Theta(\cdot)$. 
\end{theorem}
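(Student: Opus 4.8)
The idea is to use Theorem~\ref{bt} to collapse $\BE[X_{m,n}(H)]$ to an elementary nested $p$-series, and then estimate that series.

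\emph{Step 1: reduction to a multiple sum.} Write $\BE[X_{m,n}(H)]=\sum_{\tilde H}\BP[\tilde H\subset G_{m,n}]$, the sum running over the labeled copies of $H$ in $\cK_{m,n}$ (equivalently over their uncollapsed images $S$ in $G_{1,mn}$, which have out-degree at most $1$ at every vertex). Fix such a copy, let $a_1<\cdots<a_t$ in $[n]$ be the images of $v_1<\cdots<v_t$, and set $d_i:=d(v_i,H)$. In formula \eqref{prob} the factor $\prod_{i\in V^-(S)}d_{in}(i,S)!$ lies in $[1,\Delta(H)!^{\,t}]$, a constant; the factor $\exp\!\big(O(\sum_{i\in V(S)}C_S(i)^2/i)\big)$ lies in $[e^{-c},e^{c}]$ for a constant $c$, since $C_S(i)\le|E(H)|=O(1)$ and $\sum_{i\in V(S)}1/i\le|V(S)|=O(1)$; and since collapsing sends block $a_p$ to integers of size $\Theta(a_p)$, the factor $\prod_{(i,j)\in E(S)}\tfrac{1}{2\sqrt{ij}}$ equals $\Theta\!\big(\prod_{i=1}^t a_i^{-d_i/2}\big)$, with all implied constants depending only on $m$ and $H$. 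Summing over the $\Theta(1)$ labeled copies supported on a fixed vertex set $\{a_1<\cdots<a_t\}$ — there are at most $(m^2)^{|E(H)|}$, and at least one unless $H$ cannot appear in $G_{m,n}$ at all (in which case $X_{m,n}(H)\equiv 0$ and both claims are trivial) — gives $\BE[X_{m,n}(H)]=\Theta(S)$, where
\[
S\;:=\;\sum_{1\le a_1<a_2<\cdots<a_t\le n}\ \prod_{i=1}^t a_i^{-d_i/2}.
\]
Here we use that $H$ has no isolated vertex, so $d_i\ge 1$ for all $i$ (in the applications $H$ is connected; an isolated vertex would merely insert an extra factor $n$).

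\emph{Step 2: the upper bound.} Discarding the ordering constraint only enlarges $S$, so
\[
S\;\le\;\prod_{i=1}^t\Big(\sum_{a=1}^n a^{-d_i/2}\Big).
\]
Now $\sum_{a=1}^n a^{-d/2}$ is $\Theta(n^{1/2})$ when $d=1$, $\Theta(\log n)$ when $d=2$, and $\Theta(1)$ when $d\ge 3$. Grouping the vertices of $H$ by degree yields $S=O\big(n^{f(H)/2}\log^{g(H)}n\big)$, hence $\BE[X_{m,n}(H)]=O\big(n^{f(H)/2}\log^{g(H)}n\big)$, which is \eqref{count}.

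\emph{Step 3: the matching lower bound under the degree ordering.} Assume $d_1\ge d_2\ge\cdots\ge d_t$. Then for some $h$ the vertices of degree $\ge 3$ are $v_1,\dots,v_h$, those of degree $2$ are $v_{h+1},\dots,v_{h+g}$ (so $g=g(H)$), and those of degree $1$ are $v_{h+g+1},\dots,v_t$ (so $f:=t-h-g=f(H)$). Restrict the sum defining $S$ to tuples with $a_i=i$ for $i\le h$, with $h<a_{h+1}<\cdots<a_{h+g}\le\lfloor\sqrt n\rfloor$, and with $n/2<a_{h+g+1}<\cdots<a_t\le n$; for $n$ large these ranges are compatible and every such tuple is counted in $S$. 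The first block contributes a positive constant; the degree-$1$ block contributes at least $n^{-f/2}\binom{\lfloor n/2\rfloor}{f}=\Theta(n^{f/2})$; and the degree-$2$ block contributes $\sum_{h<a_{h+1}<\cdots<a_{h+g}\le\lfloor\sqrt n\rfloor}\prod a_i^{-1}=(1-o(1))\tfrac{1}{g!}\big(\sum_{h<a\le\sqrt n}a^{-1}\big)^g=\Theta(\log^g n)$, the repeated-index ("diagonal") terms being of lower order. Multiplying, $S=\Omega\big(n^{f/2}\log^g n\big)$, which together with Step~2 upgrades $O(\cdot)$ to $\Theta(\cdot)$.

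\textbf{Main obstacle.} The only delicate part is Step~1: one must verify that passing from $G_{1,mn}$ to $G_{m,n}$, together with the bookkeeping of sub-vertex choices, labeled-edge choices, the in-degree factorials in \eqref{prob}, and the realizability constraint that out-degrees not exceed $m$, contributes only multiplicative constants, so that \eqref{prob} genuinely collapses $\BE[X_{m,n}(H)]$ to the nested $p$-series $S$ with two-sided constants. After that everything is routine: Step~2 is a single line, and Step~3 only requires sending the degree-$1$, degree-$2$, and degree-$\ge 3$ vertices to index ranges of sizes $\Theta(n)$, $\Theta(\sqrt n)$, and $O(1)$ respectively — which is exactly what the monotone degree ordering allows.
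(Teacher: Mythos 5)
Your proposal is correct and follows essentially the same route as the paper: Theorem \ref{bt} reduces $\BE[X_{m,n}(H)]$ to the nested sum $\sum_{x_1<\cdots<x_t}\prod_i x_i^{-d(v_i,H)/2}$ up to constants, the upper bound comes from replacing the ordered sum by a product of single sums, and the lower bound under the monotone degree ordering comes from restricting the degree-$2$ vertices to $[\,\cdot\,,\sqrt n]$ and the degree-$1$ vertices to a top range of size $\Theta(n)$, exactly as in the paper's appendix. The only differences are cosmetic (pinning the degree-$\ge 3$ vertices to the first $h$ positions and using $(n/2,n]$ instead of $[\sqrt n,n]$), plus your explicit remark about realizability of $H$, which the paper leaves implicit.
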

\begin{proof}Let $t$ denote the number of vertices of $H$. Then:
	$$
	\BE[X_{m,n}(H)] = \sum_{V\subset[n] : |V|= t} \BE[X_{m,n}(H,V)].
	$$
	For $ V = \{x_1<\ldots< x_t\}$, we have:
	\begin{equation}\label{bound}
	\BE[X_{m,n}(H,V)] = \Theta \left(\prod_{i=1}^t \frac{1}{x_i^{d(v_i,H)/2}}\right).
	\end{equation}To see this, let $\mathcal S$ denote the set of labeled graphs in $G_{1,mn}$ which collapse to a copy of $H$ on the vertex set $V$. Then: 
	$$
	\BE[X_{m,n}(H,V)] = \sum_{S\in \mathcal S} \BP[S\subset G_{1,mn}]
	$$By Theorem \ref{bt} equation \eqref{prob} we see that for each fixed $S$ we have: 
	\[
	\BP[S \subset G_{1,mn}] = \Theta(1) \left(\prod_{(i,j)\in E(S)}\frac{1}{2 \sqrt{ij}}\right) 
	\]By taking sum over all possible $S\in\mathcal S$ gives equation \eqref{bound}. Since $m$, and $|E(H)|$ are constants we obtain: 
	\begin{align*}
	\BE [X_{m,n}(H)] &= \sum_{1\leq x_1 < \ldots < x_t\leq n} \BE\left[ X_{m,n}(H, \{x_1,\ldots, x_t\})\right] \\
	&=  \sum_{1\leq x_1 < \ldots < x_t\leq n} \Theta \left( \prod_{i=1}^t \frac{1}{x_i^{d(v_i,H)/2}}\right) \\ 
	&= O\left(\prod_{i=1}^t \left(\sum_{1\leq x_i \leq n} \frac{1}{x_i^{d(v_i,H)/2}} \right)\right) \\
	&= O(n^{f(H)/2}\log^{g(H)}).
	\end{align*}
	For the moreover part of the theorem, see the Appendix. 
\end{proof}
\noindent As we will see later in the paper, we will sometimes be interested in counting ordered graphs where the smallest vertex is large. That is, given an ordered graph $H$, and $M = M(n)$ a function which tends to infinity, then we can consider the copies of $H$ which appear on $[M,n]$. Note that in some cases, this quantity will be asymptotically equal to the graph count in the whole graph.  To perform these counts more accurately, we present Lemma \ref{to-use}: 
\begin{lemma}\label{to-use}Let $H$ be an ordered loopless graph on $t$ vertices. Let $V\subset [M,n]$ be a subset of $t$ vertices $\{x_1<\ldots <x_t\}$. Then: 
	\begin{equation}\label{prob 2} 
	\BE[X_{m,n}(H,V)] = \left( 1 \pm O\left(\frac{1}{M}\right) \right) \frac{\varphi(H,m)}{(2m)^{|E(H)|}} \prod_{i=1}^t \frac{1}{x_i^{d(v_i,H)/2}}
	\end{equation}where we recall the definition of $\varphi$: $$
	\varphi(H, m) = \left( \prod_{v\in V^{-}(H)} [m]^{d_{in}(v,H)}\right) \left(\prod_{v\in V^{+}(H)} [m]_{d_{out}(v, H)}\right).
	$$
\end{lemma}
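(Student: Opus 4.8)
The plan is to reduce the computation in $G_{m,n}$ to a single application of Theorem~\ref{bt} in the uncollapsed graph $G_{1,mn}$, and then to carry out a vertex-by-vertex combinatorial factorization. First I would unfold the definitions. Writing $A_i = [(x_i-1)m+1,\, x_i m]$ for the $i$-th block of uncollapsed vertices, a labeled copy $\tilde H\subset\cK_{m,n}[V]$ isomorphic to $H$ amounts to choosing, for each edge of $H$ between $v_i$ and $v_j$, a pair of uncollapsed vertices $(a',b')$ with $a'\in A_i$, $b'\in A_j$. Such a copy appears in $G_{m,n}[V]$ precisely when the corresponding labeled graph $S$ (the edges ``$b'\to a'$'') appears in $G_{1,mn}$; moreover $\BP[\tilde H\subset G_{m,n}[V]]=0$ unless, at every block, the out-half-edges of $H$ are carried by \emph{distinct} uncollapsed vertices (since each uncollapsed vertex has out-degree exactly $1$, and $H$ is loopless). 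Hence
$$\BE[X_{m,n}(H,V)] = \sum_{S}\BP[S\subset G_{1,mn}],$$
the sum ranging over the labeled graphs $S$ of this type.

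Next I would apply formula~\eqref{prob} of Theorem~\ref{bt} to each term. Every $S$ in the sum has $|E(S)|=|E(H)|$ and at most $2|E(H)|$ vertices, all lying above $(M-1)m+1$; thus $C_S(i)\le|E(H)|$ and $\sum_{i\in V(S)}C_S(i)^2/i = O(1/M)$, so the exponential correction equals $1+O(1/M)$ uniformly in $S$. An edge of $H$ between $v_i$ and $v_j$ contributes $1/(2\sqrt{a'b'})$ with $a'\in A_i$ and $b'\in A_j$; since $a'=x_i m\,(1+O(1/M))$ and $b'=x_j m\,(1+O(1/M))$, this equals $\frac{1}{2m\sqrt{x_i x_j}}\,(1+O(1/M))$. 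Multiplying over the $|E(H)|$ edges, and using that $v_i$ lies on $d(v_i,H)$ edges, turns the edge product into $\frac{1}{(2m)^{|E(H)|}}\prod_{i=1}^t x_i^{-d(v_i,H)/2}$, again up to $1+O(1/M)$ and independently of $S$. Pulling these two $S$-independent factors out leaves
$$\BE[X_{m,n}(H,V)] = \Bigl(1+O\bigl(\tfrac1M\bigr)\Bigr)\,\frac{1}{(2m)^{|E(H)|}}\prod_{i=1}^t \frac{1}{x_i^{d(v_i,H)/2}}\;\sum_{S}\prod_{p\in V^{-}(S)} d_{in}(p,S)!.$$

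It then remains to show $\sum_{S}\prod_{p\in V^{-}(S)} d_{in}(p,S)! = \varphi(H,m)$, and this sum factorizes over the vertices $v$ of $H$ since the blocks are disjoint. At the block of a vertex $v$, the $d_{out}(v,H)$ distinguishable out-half-edges must be placed on distinct uncollapsed vertices, giving $[m]_{d_{out}(v,H)}$ choices and no factorial; the $r:=d_{in}(v,H)$ distinguishable in-half-edges may be placed arbitrarily, and a placement whose fibres have sizes $k_1,\dots,k_m$ occurs in $\binom{r}{k_1,\dots,k_m}$ ways and carries weight $\prod_j k_j!$, so its total contribution is $\sum_{k_1+\cdots+k_m=r} r! = r!\binom{r+m-1}{m-1} = [m]^{d_{in}(v,H)}$. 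Taking the product over all $v\in V(H)$ recovers exactly $\varphi(H,m)$, and assembling the three pieces gives the claim. The main obstacle is this last step: one must set up the correspondence between labeled subgraphs $S$ and (choice of $\tilde H$ plus distribution of half-edges within blocks) carefully enough that the out-degree-one constraint of $G_{1,mn}$ is faithfully reflected as ``out-half-edges occupy distinct uncollapsed vertices,'' and that the factorials produced by~\eqref{prob} reassemble precisely into the upper factorials $[m]^{d_{in}}$ rather than something merely of the same order.
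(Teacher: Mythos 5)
Your proposal is correct and follows essentially the same route as the paper: reduce to labeled subgraphs of $G_{1,mn}$, apply Theorem \ref{bt}, pull out the uniform edge-product and correction factors, and factorize the remaining sum of in-degree factorials block by block into $[m]_{d_{out}}[m]^{d_{in}}$ per vertex. The only (harmless) difference is the per-vertex evaluation, where you sum over fibre-size compositions directly to get $r!\binom{r+m-1}{m-1}=[m]^{r}$, while the paper groups the maps by image size and uses a Vandermonde-type binomial identity.
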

\noindent Note that the above lemma takes care of a couple of trivial cases as well: If a graph $H$ has out-degree more than $m$, then we know it can never appear in our process, and the lower factorials do indeed give zero if any out degree is more than $m$. Furthermore, it saves a lot of work when computing graph counts: If we revisit the situation where $H= \{(v_1,v_2),(v_1,v_2)\}$, then note that $\varphi(H, m) = m^3 (m+1)$, so we obtain equation \eqref{e3} immediately. 
\begin{proof}[Proof of lemma \ref{to-use}]Let $\mathcal S(H,V)$ be the set of graphs in $G_{1,mn}$ such that they form a copy of $H$ in the vertex set $V$ after being collapsed. We want to compute the probability $S\subset G_{1,mn}$ for $S\in \mathcal S(H,V)$, by Theorem \ref{bt} this is given by : 
	$$
	\BP[S] = \left(\prod_{i \in V^-(S)} d_{in}(i , S)!\right) \left(\prod_{(i,j) \in E(S)} \frac{1}{2\sqrt{ij}}\right) \exp\left( O \left( \sum_{ i \in V(S)} C_S(i)^2 / i\right)\right)
	$$the last factor will be always $(1\pm O(1/M))$. The second factor we can write as: 
	$$
	\frac{1}{(2m)^{|E(H)|}}\prod_{i=1}^t \frac{1}{x_i^{d(v_i,H)/2}}\left( 1 \pm O\left(\frac{1}{M}\right)\right)
	$$Thus, 
	$$
	\BE[X_{m,n}(H,V)] = \left( 1 \pm O\left(\frac{1}{M}\right)\right)\frac{1}{(2m)^{|E(H)|}}\prod_{i=1}^t \frac{1}{x_i^{d(v_i,H)/2}}\sum_{S\in \mathcal S(H,V)} \left(\prod_{i \in V^-(S)} d_{in}(i , S)!\right).
	$$We focus only on the last factor. Let $r$ be the number of edges of $H$, and denote these edges by $e_j= \{(a_j, b_j)\}$. Let $X_j$ denote the subset of $\{a_1,\ldots,a_r, b_1\ldots, b_r\}$ which corresponds to vertex $x_j$, in particular note that $|X_j| = d(v_j, H)$. We will further split each $X_j$ into two types: In-type and out-type which will correspond to whether they were an in-vertex or an out vertex, and call these sets $I(X_j)$ and $O(X_j)$ respectively. For example, if we consider $H$ to be the following ordered graph with the given labeling:
	\newline
	\begin{center}
		\begin{tikzpicture}
		[acteur/.style={circle, fill=black!25, inner sep=0pt,minimum size=5pt,}] 
		\node (v1) at ( 0,0) [acteur][label=below:$v_1$]{};
		\node (v2) at ( 2,0)[acteur][label=below:$v_2$]{}; 
		\node (v3) at ( 4,0) [acteur][label=below:$v_3$]{}; 
		\node (v4) at (6,0)[acteur][label=below:$v_4$]{};
		\draw (v1) -- (v2);
		\draw (v2) -- (v3);
		\draw (v3) -- (v4);
		\draw[line width=0.5pt] (v1) .. controls(2,0.5) .. (v3);
		\draw[line width=0.5pt] (v2) .. controls(4,0.5) .. (v4);
		\node (d)at (3,-0.5) [fill=white,draw=none][label=below:$H$]{};
		\node (e1)at (1,0.14) [fill=white,draw=none][label=below:$e_1$]{};
		\node (e2)at (2,0.85)[fill=white,draw=none][label=below:$e_2$]{};
		\node (e3)at (3,0.14) [fill=white,draw=none][label=below:$e_3$]{};
		\node (e4)at (4,0.85)[fill=white,draw=none][label=below:$e_4$]{};
		\node (e5)at (5,0.14) [fill=white,draw=none][label=below:$e_5$]{};
		\end{tikzpicture}
	\end{center}
	Then
	\begin{itemize}
		\item $X_1 = \{a_1,a_2\}$, $I(X_1) = \{a_1,a_2\}$ and $O(X_1) = \emptyset$. 
		\item $X_2 = \{b_1,a_3,a_4\}$, $I(X_2)=\{a_3,a_4\}$ and $O(X_2)=\{b_2\}$. 
		\item $X_3 = \{b_3,b_2,a_5\}$, $I(X_3) =\{a_5\}$, and $O(X_3) = \{b_2,b_3\}$. 
		\item $X_4 = \{b_4,b_5\}$, $I(X_4) =\emptyset$, and $O(X_4) = \{b_4,b_5\}$.
	\end{itemize}
	Note that we can think of $S\in \mathcal S(H,V)$ as maps $\sigma$ such that for a point $u\in X_j$ we have $\sigma(u) \in [(x_j -1 ) m + 1, x_j m]$. Furthermore, given any mapping $\sigma$, which takes $\sigma: X_j \rightarrow [(x_j -1 ) m + 1, x_j m]$ which satisfies $|\sigma(O(X_j))| = d_{out}(v_j, H)$ (this is  the condition that the out vertices have degree one), then we can construct a graph. 
	\\
	\\
	For a map $\sigma$, we can define a function $w(\sigma)$ which equals $\prod_{j \in V^-(S)} d_{in}(j , S)!$ where $S$ is the graph corresponding to $\sigma$. Let $\Omega$ be the set of all such mappings. Then, the factor we are interested in equals $$\sum_{\sigma \in \Omega} w(\sigma)$$Note that every $\sigma$ is uniquely defined by its action on $X_j$. Hence, we can break them up as $\sigma = \sigma_1 \times \ldots \times\sigma_t$. Let $\Omega_j$ be the set of all mappings $\sigma_j$ from $X_j$ to $ [(x_j -1 ) m + 1, x_j m]$ which satisfies $|\sigma(O(X_j))| = d_{out}(v_j, H)$. Given one $\sigma_j$, we can define $$ w(\sigma_j) = \prod_{u \in Im(\sigma_j)} |\sigma_j^{-1}(u) \cap I(X_j)|!$$Note that $$w(\sigma) = \prod_{i=1}^t w(\sigma_j)$$ where $\sigma_j$ is the restriction of $\sigma$ to $X_j$. Thus, we obtain: 
	\begin{align*}
	\sum_{S\in \mathcal S(H,V)} \prod_{j \in V^-(S)} d_{in}(j , S)! &= \sum_{(\sigma_1,\ldots, \sigma_t)\in \Omega_1\times\ldots \times \Omega_t} \prod_{j=1}^t w(\sigma_j)\\
	&= \prod_{j=1}^t \sum_{\sigma_j \in \Omega_j} w(\sigma_j)
	\end{align*}
	Fix $j$, and for the remainder of the proof write $d_j$ instead of $d_{in}(v_j,H)$. First of all note that the only restriction on where to map $O(X_j)$ is that they must map to distinct points. This will gives us a factor of $[m]_{d_{out}(v_j,H)}$. To map $I(X_j)$ and weight the maps accordingly, we have:
	\begin{itemize}
		\item If we choose the images to be $d_j$ distinct points, then we have $[m]_{d_j}$ many maps with  $w(\sigma_j) = 1$.
		\item If we choose the images to be $d_j-1 $ distinct points, then we have ${d_j-1 \choose 2}[m]_{d_j-1}$ many maps with $w(\sigma_j)=2$. \\
		$\vdots$
		\item If we choose the images to be $1$ distinct point, then we have $[m]_1$ many maps with $w(\sigma_j) = d_j!$. 
	\end{itemize}In general, the contribution of adding over maps $\sigma$ with exactly $k$ points in the image is given by: 
	$$
	\frac{d_j ! {d_j -1 \choose k-1} }{k!} [m]_k.
	$$Indeed, First we permute the points. Then we partition them into $k$ non-empty sets, and for the first partition we have $m$ ways to select its image, for the second partition we have $m-1$ ways to select its image and so on. Lastly, we divide by $k!$ to account for the double counting given by the ways we can re-arrange the partition amongst themselves. The key point here is that we do not further divide by the number of ways we can re-arrange the inside of these partitions. This will make each mapping $\sigma_j$ be counted a total of $\prod_{u \in Im(\sigma_j)} |\sigma_j^{-1}(u) \cap I(X_j)|!$ many times (which is exactly the factor we wanted to weight it by). Hence,
	\begin{align*}
	\sum_{\sigma_j \in \Omega_j} w(\sigma_j) &= [m]_{d_{out}(v_j,H)}\sum_{k=1}^{d_j } \frac{d_j ! {d_j -1 \choose k-1} }{k!} [m]_k\\
	&= [m]_{d_{out}(v_j,H)}d_{j} ! \sum_{k=1}^{d_{j }} {d_j-1\choose k-1}{m \choose k}\\
	&= [m]_{d_{out}(v_j,H)}d_{j} !\sum_{k=0}^{d_{j}-1 } {d_{j} -1 \choose d_{j} -1-k}{m\choose k+1}\\
	&= [m]_{d_{out}(v_j,H)}d_{j} ! {m+ d_j -1 \choose d_j} \\
	&= [m]_{d_{out}(v_j,H)}[m]^{d_j}\\
	&=[m]_{d_{out}(v_j,H)}[m]^{d_{in}(v_j, H)}
	\end{align*}That is, 
	\begin{align*}
	\sum_{S\in \mathcal S(H,V)} \prod_{j \in V^-(S)} d_{in}(j , S)! &=  \prod_{j=1}^t \sum_{\sigma_j \in \Omega_j} w(\sigma_j)\\
	&= \prod_{j=1}^t[m]_{d_{out}(v_j,H)}[m]^{d_{in}(v_j, H)}\\
	&= \varphi(H, m)
	\end{align*}as defined in the theorem. This concludes the proof.
\end{proof}
\noindent Lastly, we will be using the following lemma:
\begin{lemma}[Lemma 3 in \cite{BR}]\label{neg corr}Let $H_1$ and $H_2$ be labeled graphs (that is, some specific graphs of $\mathcal K_{m,n}$), which are vertex disjoint. Then: 
	$$
	\BP[H_1 \cup H_2] \leq \BP[H_1] \BP[H_2].
	$$That is, disjoint graphs are negatively correlated.
\end{lemma}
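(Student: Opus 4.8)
The plan is to deduce the inequality directly from the exact probability formula in the first part of Theorem~\ref{bt}, rather than from a probabilistic coupling. Recall that a labeled graph in $\cK_{m,n}$ is by definition a subgraph of $G_{1,mn}$ with out-degree at most $1$ at every vertex, and ``$H\subset G_{m,n}$'' is the event that this subgraph occurs in $G_{1,mn}$. If $H_1$ and $H_2$ are vertex-disjoint in $\cK_{m,n}$, then they occupy disjoint blocks of $[mn]$, hence are vertex-disjoint as subgraphs of $G_{1,mn}$ (and their edge sets are disjoint, and $H_1\cup H_2$ still has out-degree $\le1$ everywhere). So it suffices to prove the statement for vertex-disjoint labeled subgraphs of $G_{1,N}$, $N=mn$, and I would apply Theorem~\ref{bt} there. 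Write $\BP[S]=F_1(S)F_2(S)F_3(S)$ with $F_1(S)=\prod_{i\in V^-(S)}d_{in}(i,S)!$, $F_2(S)=\prod_{i\in V^+(S)}\tfrac1{2i-1}$, and $F_3(S)=\prod_{i\notin V^+(S)}\bigl(1+\tfrac{C_S(i)}{2i-1}\bigr)$.

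The first step is to observe that $F_1$ and $F_2$ are \emph{exactly} multiplicative across $H_1$ and $H_2$. Since $V(H_1)\cap V(H_2)=\emptyset$, one has $V^-(H_1\cup H_2)=V^-(H_1)\sqcup V^-(H_2)$ and $V^+(H_1\cup H_2)=V^+(H_1)\sqcup V^+(H_2)$, and the in-degree of a vertex in $H_1\cup H_2$ is its in-degree in whichever $H_\ell$ contains it; hence $F_1(H_1\cup H_2)=F_1(H_1)F_1(H_2)$ and $F_2(H_1\cup H_2)=F_2(H_1)F_2(H_2)$. The second step reduces everything to an inequality for $F_3$: since $E(H_1)$ and $E(H_2)$ are disjoint, $C_{H_1\cup H_2}(i)=C_{H_1}(i)+C_{H_2}(i)$ for every $i$, so, setting $a_i=C_{H_1}(i)/(2i-1)\ge0$ and $b_i=C_{H_2}(i)/(2i-1)\ge0$, it remains to prove
$$\prod_{i\notin V^+(H_1)\cup V^+(H_2)}(1+a_i+b_i)\ \le\ \prod_{i\notin V^+(H_1)}(1+a_i)\ \prod_{i\notin V^+(H_2)}(1+b_i).$$

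The third step is a term-by-term comparison over $i\in[N]$. If $i$ lies outside both $V^+(H_1)$ and $V^+(H_2)$, the right-hand side contributes $(1+a_i)(1+b_i)=1+a_i+b_i+a_ib_i\ge1+a_i+b_i$, dominating the left. If $i\in V^+(H_1)$ (so $i\notin V^+(H_2)$ by disjointness), the left-hand side has no factor for this $i$, while the right-hand side still carries the factor $1+b_i\ge1$ from the $H_2$ product; the case $i\in V^+(H_2)$ is symmetric. Multiplying these inequalities over all $i$ gives the displayed bound, and combining with the first step yields $\BP[H_1\cup H_2]\le\BP[H_1]\BP[H_2]$.

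I expect the only real subtlety to be in this last step: the index set of $F_3$ depends on $V^+(S)$, so the two sides of the target inequality are products over different ground sets, and one must verify that every index ``missing'' from the left-hand product contributes a factor $\ge1$ on the right — this is precisely where vertex-disjointness is used again, guaranteeing $V^+(H_1)$ and $V^+(H_2)$ never overlap. (An alternative route would reveal $X_1,\dots,X_N$ in time order and track conditional attachment probabilities, but that essentially re-proves negative correlation of the attachment choices and becomes circular unless one reintroduces exactly the degree bookkeeping encoded in Theorem~\ref{bt}; the closed-form argument above is cleaner, and gives a self-contained proof of the statement quoted from \cite{BR}.)
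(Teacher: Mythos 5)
Your argument is correct, and it is worth noting that the paper itself contains no proof of Lemma \ref{neg corr}: it quotes the statement as Lemma 3 of \cite{BR} and adds only the heuristic remark that conditioning on one graph reduces the expected degrees available to the other. Your proposal supplies a self-contained proof using only the exact formula in the first part of Theorem \ref{bt}, and every step checks out: vertex-disjointness in $\cK_{m,n}$ places the two labeled graphs in disjoint blocks of $[mn]$, so $H_1\cup H_2$ still has out-degree at most one everywhere and the theorem applies to it; the in-degree factor and the $\prod_{i\in V^+}\frac{1}{2i-1}$ factor are exactly multiplicative over vertex-disjoint unions; $C_{H_1\cup H_2}(i)=C_{H_1}(i)+C_{H_2}(i)$ because the edge sets are disjoint; and the product comparison is sound, since the left-hand index set $[mn]\setminus(V^+(H_1)\cup V^+(H_2))$ is contained in each right-hand index set, common indices satisfy $(1+a_i)(1+b_i)\ge 1+a_i+b_i$, and the indices present only on the right contribute factors $\ge 1$. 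In effect your computation quantifies the paper's heuristic: the slack in the inequality is exactly the cross terms $a_ib_i$ together with the factors $1+a_i$ or $1+b_i$ sitting at the out-vertices of the other graph. One small caveat: the first display of Theorem \ref{bt} as printed has $d_{in}(i,S)$ without a factorial, whereas the exact formula (compare the second display) requires $d_{in}(i,S)!$, which is what your $F_1$ uses; this is evidently a typo in the paper, and in any case either version of that factor is exactly multiplicative across vertex-disjoint graphs, so your argument is unaffected.
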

\noindent The above lemma is of course no surprise. When we condition on one of the graphs appearing, this reduces the expected degree of the other one (since we know that some of the edges ``missed" connections), and thus they are negatively correlated. 
\\
\\
\noindent Recall from above that we will be describing the approximate limit of the spectrum via its moments. There are two natural  issues with this strategy: Is there a distribution whose moments match the given constants $C(k,\ve,m)$, and if so, is it  unique?
\\
\\
The first question is known in the literature as the Hamburger moment problem: Given a sequence $\{m_k\}_{k\geq 0}$, we say that it is solvable if there is a positive Borel measure $\mu$ on the real line such that the moments of $\mu$ are given by $\{m_k\}$. If the sequence is indeed solvable, the question of whether the measure is unique is known as $M$-determinacy (short for moment-determinacy). 
\begin{theorem}[Hamburger Theorem] A sequence $\{m_k\}_{k\geq 0}$ is solvable if and only if for an arbitrary sequence $\{c_j\}_{j\geq 0}$ of complex numbers with finite support, one has: 
	\begin{equation}\label{hamb}
	\sum_{r,s\geq 0} m_{r+s} c_r \bar{c}_s \geq 0.
	\end{equation}
\end{theorem}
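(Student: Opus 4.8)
\emph{The plan.} One direction is immediate: if $\mu$ is a positive Borel measure with $m_k=\int x^k\,d\mu$, then for any finitely supported sequence $\{c_j\}$,
$$\sum_{r,s\ge 0} m_{r+s}\,c_r\bar c_s=\int\Big|\sum_{r\ge 0}c_r x^r\Big|^2 d\mu(x)\ \ge\ 0 .$$
So the work is the converse: I would show that the Hankel positivity condition \eqref{hamb} forces the existence of a representing measure, following the operator-theoretic route.

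First I package the data as a linear functional $L:\mathbb C[x]\to\mathbb C$ with $L(x^k)=m_k$, extended linearly. Writing $\bar p$ for the polynomial with conjugated coefficients (so $p\bar p$ restricted to $\mathbb R$ is $|p|^2$), condition \eqref{hamb} says exactly that $L(p\bar p)\ge 0$ for every $p$; hence $\langle p,q\rangle:=L(p\bar q)$ is a positive semidefinite Hermitian form on $\mathbb C[x]$. Quotienting by the null space $\mathcal N=\{p:L(p\bar p)=0\}$ and completing yields a Hilbert space $\mathcal H$ in which the classes of polynomials form a dense subspace, and the multiplication map $M:p\mapsto xp$ descends to a densely defined operator which is symmetric, since $\langle Mp,q\rangle=L(xp\,\bar q)=L(p\,\overline{xq})=\langle p,Mq\rangle$ (here $x\in\mathbb R$ makes $\overline{xq}=x\bar q$).

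Next I produce a self-adjoint extension $A\supseteq M$. The key observation is that $M$ has equal deficiency indices: the map $J:\sum a_jx^j\mapsto\sum\bar a_jx^j$ is an antilinear isometric involution on the polynomials — isometric because the $m_k$ are real, so $L(Jp\,\overline{Jp})=L(\bar p\,p)=L(p\bar p)$ — it extends to $\mathcal H$, and it commutes with $M$; by von Neumann's theorem a symmetric operator commuting with such a conjugation has equal deficiency indices and thus admits self-adjoint extensions. Fix one, $A$, and apply the spectral theorem: $A=\int_{\mathbb R}t\,dE(t)$ for a projection-valued measure $E$. Let $\mathbf 1\in\mathcal H$ be the class of the constant polynomial and set $\mu(B):=\langle E(B)\mathbf 1,\mathbf 1\rangle$; this is a positive Borel measure of mass $\langle\mathbf 1,\mathbf 1\rangle=L(1)=m_0$. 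Since every polynomial lies in $\mathrm{Dom}(M)\subseteq\mathrm{Dom}(A)$, one gets $A^k\mathbf 1=M^k\mathbf 1=$ the class of $x^k$, whence
$$\int_{\mathbb R}t^k\,d\mu(t)=\langle A^k\mathbf 1,\mathbf 1\rangle=\langle x^k,1\rangle=L(x^k)=m_k ,$$
the desired representation.

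\emph{Main obstacle.} The delicate part is handling the (generally unbounded) operator $M$: verifying equal deficiency indices, invoking the spectral theorem in the unbounded setting, and checking that $\mathbf 1$ genuinely lies in the domain of every power $A^k$ so that the moment identity above is legitimate — this last point is exactly where density of polynomials and the inclusion $M\subseteq A$ are used. An alternative that avoids operator theory is to solve each truncated problem $(m_0,\dots,m_{2n})$ by a finitely supported quadrature measure $\mu_n$ (using positive semidefiniteness of the finite Hankel matrix), note $\mu_n(\mathbb R)=m_0$ and, from the uniform bound $\int x^{2j}\,d\mu_n=m_{2j}$ for $2j\le 2n$, that $\{\mu_n\}$ is tight; then extract a vaguely convergent subsequence via Helly's selection theorem and pass to the limit. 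On that route the crux is precisely the tail/uniform-integrability estimate needed to justify $\int x^k\,d\mu=\lim\int x^k\,d\mu_n=m_k$.
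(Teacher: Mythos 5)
The paper does not prove this statement at all: it is quoted as a classical tool (the solvability criterion for the Hamburger moment problem) and used only as a black box, so there is no in-paper argument to compare yours against. Your outline is the standard textbook proof and is essentially correct. The easy direction is exactly as you write it, and for the converse both of your routes work: the GNS-type construction with the multiplication operator and a self-adjoint extension obtained from von Neumann's conjugation criterion, or the finite-section/quadrature argument combined with Helly selection and a uniform-integrability estimate (which you correctly identify as the crux there, and which follows because for each fixed $k$ the bound $\int x^{2k+2}\,d\mu_n=m_{2k+2}$ holds for all large $n$). A few small points your sketch glosses over and would need to be said explicitly in a full write-up: the hypothesis \eqref{hamb} forces the Hankel matrix $(m_{r+s})$ to be Hermitian, hence each $m_k$ is real, which is what legitimizes both the symmetry of $M$ and the isometry of your conjugation $J$; the null space $\mathcal N$ must be checked to be invariant under multiplication by $x$ (a Cauchy--Schwarz argument with the semidefinite form), so that $M$ indeed descends to the quotient; and the degenerate case in which the quotient is finite-dimensional should be noted, where $M$ is already a bounded self-adjoint operator and the representing measure is finitely atomic. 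None of these is a genuine obstruction; with them filled in, your argument is a complete and correct proof of the quoted criterion.
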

\begin{theorem}[Carleman's condition]Let $\mu$ be a measure on $\mathbb R$ such that all the moments $$m_k = \int x^k d\mu$$ are finite. If  
	\begin{equation}\label{condition}
	\limsup_{k \rightarrow \infty} \frac{m_{2k}^{1/2k}}{2k} < \infty
	\end{equation}
	then $\mu$ is the only measure on $\mathbb R$ with $\{m_k\}$ as its sequence of moments. That is, $\{m_k\}$ is $M$-determinate. 
\end{theorem}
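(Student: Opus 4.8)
The plan is to deduce Carleman's condition from the Denjoy--Carleman theorem on quasi-analytic classes. Suppose $\mu$ and $\nu$ are two finite Borel measures on $\mathbb R$ having the common moment sequence $\{m_k\}$, and form the characteristic functions $\phi(t) = \int e^{itx}\,d\mu(x)$ and $\psi(t) = \int e^{itx}\,d\nu(x)$. First I would record that all absolute moments are finite: $\beta^{\mu}_{2j} = m_{2j}$, and by Cauchy--Schwarz $\beta^{\mu}_{2j+1} = \int |x|^{j}|x|^{j+1}\,d\mu \le m_{2j}^{1/2} m_{2j+2}^{1/2} < \infty$, and similarly for $\nu$. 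Consequently one may differentiate under the integral sign repeatedly, so $\phi,\psi \in C^{\infty}(\mathbb R)$ with $\phi^{(n)}(t) = \int (ix)^{n} e^{itx}\,d\mu(x)$; in particular $|\phi^{(n)}(t)| \le \beta^{\mu}_{n}$ for every real $t$, and $\phi^{(n)}(0) = i^{n} m_{n} = \psi^{(n)}(0)$ for all $n \ge 0$.

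Next, set $f = \phi - \psi$. Then $f \in C^{\infty}(\mathbb R)$, $f^{(n)}(0) = 0$ for every $n$, and $|f^{(n)}(t)| \le M_{n}$ for all $t$, where $M_{n} := \beta^{\mu}_{n} + \beta^{\nu}_{n}$; by the bound above $M_{2j+1} \le 2 m_{2j}^{1/2} m_{2j+2}^{1/2}$ and $M_{2j} = 2 m_{2j}$, so the sequence $\{M_n\}$ is entirely controlled by the even moments. Carleman's hypothesis $\limsup_{k} m_{2k}^{1/2k}/(2k) < \infty$ says precisely that $m_{2k} \le (C\,2k)^{2k}$ for some constant $C$ and all large $k$, hence $M_{2k}^{-1/2k} \ge \tfrac12\, m_{2k}^{-1/2k} \ge 1/(4Ck)$, and therefore already the even-index terms give $\sum_{n} M_{n}^{-1/n} = \infty$. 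Passing to the log-convex minorant of $\{M_n\}$ (which only enlarges this sum) puts us exactly in the hypothesis of the Denjoy--Carleman theorem.

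It then remains to invoke that theorem: a function lying in the class $C_{M}(\mathbb R) = \{\, g \in C^{\infty} : \sup_{t} |g^{(n)}(t)| \le A B^{n} M_{n} \text{ for some } A,B > 0 \,\}$, with $\{M_n\}$ log-convex and $\sum_n M_n^{-1/n} = \infty$, is \emph{quasi-analytic}, i.e.\ is uniquely determined by the values of all its derivatives at a single point. Applying this to $f$ at the point $0$, and using $f^{(n)}(0) = 0$ for all $n$, we conclude $f \equiv 0$, so $\phi \equiv \psi$; the uniqueness theorem for Fourier transforms of finite measures then yields $\mu = \nu$, which is the asserted $M$-determinacy. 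I expect the genuine obstacle to be the Denjoy--Carleman theorem itself, which is the one deep analytic ingredient (its usual proof rests on a sharp estimate for entire functions of exponential type, or on the Bang/Ahlfors interpolation trick); by contrast the smoothness of $\phi$ and $\psi$, the derivative bounds, the handling of odd moments via Cauchy--Schwarz, and the translation of Carleman's $\limsup$ condition into the divergence of $\sum M_n^{-1/n}$ are all routine. A self-contained alternative, avoiding quasi-analyticity, is the operator-theoretic route: reinterpret $M$-determinacy as essential self-adjointness on $\ell^{2}(\mathbb N)$ of the Jacobi matrix $J$ of the orthonormal polynomials of $\mu$, and check that Carleman's condition forces its off-diagonal entries $a_{n}$ to satisfy $\sum_{n} a_{n}^{-1} = \infty$, which by Carleman's classical criterion makes $J$ have deficiency indices $(0,0)$; this trades the Denjoy--Carleman theorem for the spectral theory of unbounded Jacobi operators.
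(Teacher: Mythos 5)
The paper does not prove this statement at all: Carleman's condition is quoted in the ``Tools from Probability'' section as a classical black-box result (alongside the Hamburger theorem), and is then merely \emph{applied} in Lemma \ref{unique} by verifying the growth bound \eqref{condition} for the constants $C(2k,\ve,m)$. Your proposal therefore goes beyond the paper by supplying the standard textbook proof of the cited theorem, and the argument you sketch is correct: finiteness of all absolute moments via Cauchy--Schwarz, differentiation under the integral to get $\phi,\psi\in C^\infty$ with $|\phi^{(n)}|\le\beta_n$, the reduction to showing $f=\phi-\psi\equiv 0$ from $f^{(n)}(0)=0$, the translation of \eqref{condition} into $m_{2k}\le (2Ck)^{2k}$ and hence divergence of $\sum_n M_n^{-1/n}$ through the even indices, quasi-analyticity via Denjoy--Carleman, and Fourier uniqueness. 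One small refinement makes your weakest step unnecessary: you invoke the log-convex minorant of $\{M_n\}$ and justify it only by noting the sum gets larger, which by itself does not show $f$ lies in the (smaller) class attached to the minorant; but in fact $M_n=\beta_n^\mu+\beta_n^\nu$ is \emph{already} log-convex, since absolute moment sequences satisfy $\beta_n^2\le\beta_{n-1}\beta_{n+1}$ (Lyapunov/Cauchy--Schwarz) and sums of log-convex sequences are log-convex, so the Denjoy--Carleman theorem in its usual log-convex form applies directly. Your alternative operator-theoretic route (essential self-adjointness of the Jacobi matrix plus the classical criterion $\sum a_n^{-1}=\infty$) is likewise a standard and valid proof of the same theorem; either one is a legitimate way to discharge what the paper simply cites.
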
\label{Moments}

\section{ Spectral moments}

\noindent In this section we find the spectral moments of a random PA graph. Note that in \cite{PJ} they conjectured that in order to carry out the moment computation for a random preferential attachment graph, it suffices to consider the edges as if they appeared independently (see their Conjecture 1). Although their model is slightly different than the one used here, we can see that the conjecture does not hold since the preferential attachment rule by which we add edges gives us the extra factor in equation \eqref{prob}. 

\begin{theorem}\label{moments thm}Let $G_{m,n}$ be a PA random graph, and let $\mu_{m,n}$ denote its ESD. Let $\bar\mu_{m,n}$ be the expected ESD. Then the first spectral moments are: 
	\begin{align*} 
	\BE[\bar\mu_{m,n}^0] &= 1 \\
	\BE[\bar\mu_{m,n}^1] &= \Theta\left(\frac{\log n}{n}\right) \\ 
	\BE[\bar\mu_{m,n}^2] &= (1+ o(1)) 2m \\
	\BE[\bar\mu_{m,n}^3] &= \Theta\left(1/\sqrt{n}\right) \\ 
	\BE[\bar\mu_{m,n}^4] &= (1+ o(1)) 2m(m+1) \log n
	\end{align*}and for $k \geq 3$ we have: 
	\begin{align}
	\BE[\bar\mu_{m,n}^{2k}] &= \Theta(n^{k/2 - 1})\label{evens}\\
	\BE[\bar\mu_{m,n}^{2k-1}] &= \Theta(n^{k/2 - 3/2}). \label{odds}
	\end{align}
\end{theorem}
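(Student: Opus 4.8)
\section*{Proof proposal}

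The plan is to reduce everything to the walk expansion \eqref{muk}. Taking expectations over $G_{m,n}$ and using $|V(G_{m,n})|=n$ gives
$$\BE[\bar\mu_{m,n}^k]=\frac1n\sum_{H\in\mathcal F(\mathcal K)}\mathcal M_k(H)\,\BE[X_{m,n}(H)],$$
and since a closed walk of length $k$ traverses $k$ edges (with multiplicity) and produces a \emph{connected} subgraph, the sum ranges over the finite set of isomorphism types of connected ordered graphs (loops and multi-edges allowed) with at most $k$ edges; the $\mathcal M_k(H)$ are fixed non-negative integers. By Theorem~\ref{mag} — which applies verbatim to graphs with loops once one counts a loop as contributing $2$ to the degree of its vertex, the convention forced by Theorem~\ref{bt} — each summand satisfies $\BE[X_{m,n}(H)]=O(n^{f(H)/2}\log^{g(H)}n)$, with a matching $\Theta(\cdot)$ for the ordering in which the degrees are non-increasing. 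Hence, up to the factor $1/n$ and finitely many terms, $\BE[\bar\mu_{m,n}^k]$ is governed by the lexicographically largest pair $(f(H),g(H))$ among admissible $H$, and the problem becomes a purely combinatorial extremal question.

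The combinatorial core I would isolate as a lemma. Let $H$ be connected with $t$ vertices and $e$ edges. If $v$ is a leaf, its unique incident edge is traversed an even number of times by any closed walk (e.g.\ by the Eulerian parity condition at $v$ in the edge-multiset of the walk), hence has multiplicity $\ge 2$; a short case check shows that for the ranges we care about the extremal $H$ has $t\ge 3$, where the $f$ pendant edges are distinct non-loops, so a closed walk of length $k$ forces $k=\sum_e m_e\ge e+f$, i.e.\ $f\le k-e$. Combining this with $f\le t-1$ (valid for $t\ge3$), with $e\ge t-1$ (equality iff $H$ is a simple tree, $e\ge t$ otherwise), and with the fact that every closed walk on a simple tree has even length, one obtains: for $k=2\ell$, $\ell\ge 3$, that $f(H)\le \ell$ with equality precisely for (an ordering of) the star $K_{1,\ell}$, for which $g=0$; and for $k=2\ell-1$, $\ell\ge3$, that $H$ is not a simple tree and $f(H)\le \ell-1$, with equality precisely for (an ordering of) $K_{1,\ell-1}$ with one loop added at the center, again with $g=0$. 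In each case every other admissible $H$ has $f$ smaller by at least one, so its contribution is smaller by a factor $n^{-1/2}$ up to (bounded) powers of $\log n$. Using the ``center-first'' ordering of the extremal graph — which is degree-non-increasing, lies in $\mathcal F(\mathcal K_{m,n})$, and carries a strictly positive $\mathcal M_k$ (exhibit the walk: loop if present, then out-and-back along each pendant edge) — it contributes $\frac1n\,\mathcal M_k\,\Theta(n^{f_{\max}/2})$, and all remaining summands are non-negative, yielding the matching lower bound. Thus $\BE[\bar\mu_{m,n}^{2\ell}]=\Theta(n^{\ell/2-1})$ and $\BE[\bar\mu_{m,n}^{2\ell-1}]=\Theta(n^{\ell/2-3/2})$ for $\ell\ge3$.

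The low-order moments I would compute directly from the displayed identity. Here $\BE[\bar\mu^0_{m,n}]=1$ since $\bar\mu_{m,n}$ is a probability measure; for $k=1$ the only admissible $H$ is a single loop, with $\mathcal M_1=1$ and $\BE[X_{m,n}(\mathrm{loop})]=\Theta(\log n)$, giving $\Theta(\log n/n)$; for $k=2$ the admissible $H$ are the edge, the loop, the double loop and the double edge, only the edge mattering, with $\mathcal M_2=2$ and $\BE[X_{m,n}(\mathrm{edge})]=(1+o(1))mn$ (Lemma~\ref{to-use}: $\varphi(\mathrm{edge},m)=m^2$ and $\sum_{i<j}(ij)^{-1/2}=(2+o(1))n$), so $\BE[\bar\mu^2_{m,n}]=(1+o(1))2m$ while the other three are $o(1)$; for $k=3$ the relevant $H$ are the triangle, the double loop, and a loop with one pendant edge, the last dominating with $\mathcal M_3\ge1$ and $\BE[X_{m,n}]=\Theta(\sqrt n)$ (center-first: center of degree $3$, leaf of degree $1$), giving $\Theta(1/\sqrt n)$; and for $k=4$ only the simple tree $K_{1,2}=P_3$ achieves $f=2$, with $\mathcal M_4(P_3)=4$ and $\BE[X_{m,n}(P_3)]=(1+o(1))\tfrac12 m(m+1)n\log n$ from the worked example \eqref{example}, every other admissible $H$ contributing $o(\log n)$, so $\BE[\bar\mu^4_{m,n}]=(1+o(1))2m(m+1)\log n$.

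I expect the main obstacle to be the extremal lemma of the second paragraph: carrying out the parity bookkeeping cleanly for graphs with loops and multi-edges, and — crucially for the absence of logarithmic factors when $k\ge5$ — proving that the maximizer is unique (so no stray $\log n$ enters the leading term), together with checking $\mathcal M_k>0$ for it. The leading-constant computations for $C_2$ and $C_4$ are routine given Lemma~\ref{to-use} and the example in Section~\ref{notation section}.
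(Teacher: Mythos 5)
Your proposal is correct and follows essentially the same route as the paper: expand via \eqref{muk}, bound each ordered graph's contribution through Theorem \ref{mag} (with loops counting two toward the degree), identify the extremal graphs --- the star $K_{1,k}$ for the $2k$-th moment and the star with a loop at the center for the odd moments --- and obtain the matching lower bound from the degree-non-increasing (center-first) ordering in the ``moreover'' part of Theorem \ref{mag}, computing the low moments directly with the worked $P_3$ count \eqref{example} for $M_4$. The only slip is cosmetic: at $k=4$ the single edge also has $f(H)=2$ (it contributes $\Theta(1)$ after the $1/n$ normalization), but your statement that every other admissible $H$ contributes $o(\log n)$ already absorbs it.
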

\begin{remark}Note that in particular we obtain the same first, second and third moments, but the 4th-moment is different in \cite{PJ}. The extra factor in equation \eqref{prob} which they do not use changes the leading coefficient from $m^2$ to $m(m+1)$. 
\end{remark}
\begin{proof}[Proof of Theorem \ref{moments thm}]For short hand, we are going to write $M_k$ instead of $\BE[\bar\mu_{m,n}^k]$. Using equation \eqref{muk} we have:
	\begin{align*}
	M_k
	&= \frac{1}{n}\BE\left[\sum_{H \in \mathcal F(\cK_{m,n})}\mathcal M_k(H) X_{m,n}(H)\right]\\
	&= \frac{1}{n} \sum_{H\in \mathcal F(\cK_{m,n})} \mathcal M_k(H) \BE[X_{m,n}(H)] .
	\end{align*}
	We mention a few words on moments $M_k$ for $k\leq 4$ and then handle the remaining cases. We have to treat them separately since they are corner cases. 
	\begin{itemize}
		\item $M_0$ follows from the definition of a pdf. 
		\item $M_1$ comes from loops. By Theorem \ref{mag} the expected number of loops is $\Theta(\log n)$. 
		\item $M_2$ comes from edges and loops. The contribution from loops is again $\Theta(\log n)$, and the number of edges is $mn$. Since we can transverse any edge in two ways we get $M_2 = (1+o(1))2m$. 
		\item $M_3$ will be dealt with in the general case below. 
		\item $M_4$ the contributions come from: A collection of loops around a vertex (contribution of $\Theta(\log n)$), a collection of loops and an edge (contribution of $\Theta(\sqrt{n})$ by using Theorem \ref{mag} and placing the vertex with the loop before the leaf), an edge which gives a contribution of $\Theta(n)$. The only remaining contribution is from paths of length two which we did in our graph count example. If $H$ is a path of length two, then $\mathcal M_4(H)$ equals $4$. Hence, $$M_4 = (1\pm o(1))\left(4 \cdot \frac{m(m+1)\log n}{2}\right).$$
	\end{itemize}
	For the general case we split the proof into the odd case and the even case. 
	\\
	\\
	\textbf{Even moments}: We aim to compute $M_{2k}$. By equation \eqref{muk}: 
	\begin{align*} 
	M_{2k} = \frac{1}{n} \sum_{H\in \mathcal F(\mathcal K_{m,n})} \mathcal M_{2k}(H) \BE[X_{m,n}(H)].
	\end{align*}
	By equation \eqref{mag} we see that the contribution of an ordered graph $H$ is $O(n^{f(H)/2 -1} \log^{g(H)} n)$. Since $H$ comes from a closed walk of length $2k$, we must have $f(H) \leq k$ (to reach each leaf we must walk their corresponding edge back and forth), with equality if and only if we have a star on $k$ edges. This yields the upper bound of $O(n^{k/2 - 1})$. Now note that this bound is indeed achievable by Theorem \ref{mag} if we let $H$ be the following ordered graph:
	\begin{center}
		\begin{tikzpicture}
		[acteur/.style={circle, fill=black!25, inner sep=0pt,minimum size=5pt,}] 
		\node (v1) at ( 0,0) [acteur][label=below:$v_1$]{};
		\node (v2) at ( 2,0)[acteur][label=below:$v_2$]{}; 
		\node (v3) at ( 4,0) [acteur][label=below:$v_3$]{}; 
		\node (d)at (6,0) [fill=white,draw=none][label=below:$\ldots$]{};
		\node (v4) at ( 8,0) [acteur][label=below:$v_{k+1}$]{};
		\draw (v1) -- (v2);
		%\draw (v1) -- (v3);
		\draw[line width=0.5pt] (v1) .. controls(2,0.5) .. (v3);
		\draw[line width=0.5pt] (v1) .. controls(4,1.5) .. (v4);
		\end{tikzpicture} 
	\end{center}
	\textbf{Odd moments:} We want to compute $M_{2k-1}$. Just as above, note that for any $H$ which yields a closed walk of length $2k-1$ we must have $f(H)\leq k-1$. To see this, note that any closed walk must have a cycle, which uses at least one edge (in the case of a loop), and then we can have at most $k-1$ vertices of degree one. Note that we can only have equality when we have a start and a loop around the high degree vertex. This yields an upper bound of $O(n^{(k-1)/2 -1})$. In the case when $H$ is the following ordered graph:
	\begin{center}
		\begin{tikzpicture}
		[acteur/.style={circle, fill=black!25, inner sep=0pt,minimum size=5pt,}] 
		\node (v1) at ( 0,0) [acteur][label=below:$v_1$]{};
		\node (v2) at ( 2,0)[acteur][label=below:$v_2$]{}; 
		\node (v3) at ( 4,0) [acteur][label=below:$v_3$]{}; 
		\node (d)at (6,0) [fill=white,draw=none][label=below:$\ldots$]{};
		\node (v4) at ( 8,0) [acteur][label=below:$v_{k+1}$]{};
		\draw (v1) -- (v2);
		%\draw (v1) -- (v3);
		\draw[line width=0.5pt] (v1) .. controls(2,0.5) .. (v3);
		\draw[line width=0.5pt] (v1) .. controls(4,1.5) .. (v4);
		\draw[line width=0.5pt] (v1) to [out=45,in=135,looseness=40] (v1);
		\end{tikzpicture} 
	\end{center}
	we obtain a matching lower bound, which implies the result. 
\end{proof}
\noindent As we pointed out during the introduction, in order to further study the spectrum of the PA graph, we need to break our analysis into two: The bulk of the spectrum and the edge eigenvalues.

\section{Spectrum of the truncated graphs and Proof of Theorem \ref{theorem:limit}}\label{section truncated}

\noindent In this section we will prove Theorem \ref{theorem:limit}. As we will see in a later section, the largest eigenvalues will arise from the stars with large degree (which correspond to vertices appearing early in our graph). Hence, to exclude these large eigenvalues from our study, we will consider the graph obtained by deleting the first vertices: 
\begin{definition}Let $\ve>0$ be fixed. An $\ve$-truncated PA random graph is a random graph obtained by sampling $G_{m,n}\in \mathcal G_{m,n}$, and then deleting the first $\ve n$ vertices to obtain $G_{\ve,m,n}$. We will denote this space of random graphs by $\mathcal G_{\ve,m,n}$. Note that these will be a graphs on $n-\ve n$ vertices and we will refer to its vertices by its original label. That is, $V(G_{\ve,m,n}) = [\ve n, n]$. 
\end{definition}
\noindent We want to study the limiting behavior of the random measures $\mu_{G_{\ve,m,n}}$ which henceforth we will write as $\mu_{\ve,m,n}$ to shorten the notation. Note that the first part of Theorem \ref{theorem:limit} will be given by Cauchy's interlacing theorem (Theorem \ref{interlacing}). Namely, 
$$
dist(\mu_{m,n}, \mu_{\ve,m,n})\leq \ve.
$$
For the second part of the theorem, we define $\mu_{\infty, \ve, m}$ via its moments, which will be the sequence given by the sequence $\{C(k,\ve,m)\}$ which we define below. 
\\
\\
\noindent Before we define the sequence, we present some notation we will be using:
\begin{definition}Let $\mathcal T_k$ denote the set of ordered trees with at most $k$ edges. 
\end{definition}
\begin{definition}Let $D =(d_1,\ldots d_t)$ be a sequence of positive integers with $\sum_i d_i = 2(t-1)$. Let $\ve > 0$ be a fixed constant between zero and one. Then define: 
	\begin{equation}\label{psi}
	\psi(D, \ve) =\frac{1}{(2m)^{t-1}}  \lim_{n \rightarrow \infty}\frac{1}{n}\sum_{\ve n \leq x_1 < \ldots < x_t \leq n} \prod_{i = 1}^t \frac{1}{x_i^{d_i/2}} .
	\end{equation}
\end{definition}
\noindent Note the above limit agrees with the earlier definition of $\psi(D,\ve)$:
 \begin{align*}
 \psi(D,\ve) &= \frac{1}{(2m)^{t-1}}  \lim_{n \rightarrow \infty}\frac{1}{n}\sum_{\ve n \leq x_1 < \ldots < x_t \leq n} \prod_{i = 1}^t \frac{1}{x_i^{d_i/2}} \\
 &=  \frac{1}{(2m)^{t-1}}  \lim_{n \rightarrow \infty}\frac{1}{n}(1\pm o(1))\int_{\ve n}^n \int_{\ve n}^{x_t} \ldots \int_{\ve n}^{x_2} \prod_{i=1}^t \frac{1}{x_i^{d_i/2}} dx_1\ldots dx_t\qquad \text{substitute $x_i = ny_i$:}\\
 &=  \frac{1}{(2m)^{t-1}}\left( \int_{\ve}^1 \int_{\ve}^{y_t} \ldots \int_{\ve}^{y_2} \prod_{i=1}^t \frac{1}{y_i^{d_i/2}} dy_1\ldots dy_t\right)\left ( \lim_{n\rightarrow \infty} \frac{(1\pm o(1)) n^{t}}{n \cdot n^{\sum_i d_i/2}}\right)\\
 &=  \frac{1}{(2m)^{t-1}} \int_{\ve}^1 \int_{\ve}^{y_t} \ldots \int_{\ve}^{y_2} \prod_{i=1}^t \frac{1}{y_i^{d_i/2}} dy_1\ldots dy_t.
 \end{align*}
\noindent It is clear that the last formula is a constant depending on $m$, $\ve$,  and $D(T)$. Recall  the critical  constants $C(k,\ve,m)$,  discussed in Section \ref{the approximate limit}: 
\begin{align*}
C(2k,\ve,m) &= \frac{1}{1-\ve} \sum_{T \in \mathcal T_k}  \varphi(T,m)\mathcal M_{2k}(T)\psi(D(T), \ve) \\
C(2k+1, \ve, m) &= 0.
\end{align*}
\noindent Apriori, the above might be a sequence of constants that do not correspond to the moments of any measure. Moreover, if they do correspond to a measure, they might not uniquely define it. However, this is not the case as shown by the following lemma:
\begin{lemma}\label{unique}
	Let $\{C(k,\ve,m)\}_{k=0}^\infty$ be defined as above. Then: 
	\begin{enumerate}
		\item The sequence $\{C(k,\ve,m)\}_{k\geq 0}$ is solvable. 
		\item The sequence is also  $M$-determinate. 
	\end{enumerate}
\end{lemma}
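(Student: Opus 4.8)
\noindent The plan is to prove the two assertions separately: solvability via Hamburger's criterion together with a limiting argument, and $M$-determinacy via Carleman's condition together with an explicit bound on the growth of $C(2k,\ve,m)$ in $k$. For part (1) the key point is that $C(k,\ve,m)$ is a limit of moment sequences of bona fide probability measures. For each $n$ the expected ESD $\bar\mu_{\ve,m,n}$ is a probability measure, and by \eqref{muk} its $k$-th moment equals $\frac{1}{|[\ve n,n]|}\sum_{H}\mathcal M_k(H)\,\BE[X_{m,n}(H,[\ve n,n])]$; invoking Lemma \ref{to-use} and Theorem \ref{mag}, one checks that after dividing by $|[\ve n,n]|\sim(1-\ve)n$ exactly the tree terms appearing in $C(k,\ve,m)$ survive as $n\to\infty$, so that $\int x^k\,d\bar\mu_{\ve,m,n}\to C(k,\ve,m)$ for every $k$ (this is the first-moment direction of the computation used below to prove Theorem \ref{theorem:limit}, and is logically independent of the variance estimates needed there). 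Granting this, for any finitely supported sequence $\{c_j\}$ and any $n$,
\[
\sum_{r,s\ge0}\Big(\int x^{r+s}\,d\bar\mu_{\ve,m,n}\Big)c_r\bar c_s
=\BE\Big[\int\bigl|\sum_r c_r x^r\bigr|^2\,d\mu_{\ve,m,n}\Big]\ \ge\ 0,
\]
and since only finitely many moments appear, letting $n\to\infty$ gives $\sum_{r,s\ge0}C(r+s,\ve,m)\,c_r\bar c_s\ge0$. By the Hamburger Theorem the sequence $\{C(k,\ve,m)\}$ is solvable.

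For part (2), Carleman's condition requires $\limsup_k C(2k,\ve,m)^{1/2k}/(2k)<\infty$, so it suffices to show $C(2k,\ve,m)\le (1-\ve)^{-1}(k!)^2 B^k$ for a constant $B=B(m,\ve)$, since then $C(2k,\ve,m)^{1/2k}=O((k!)^{1/k})=O(k)$. Grouping the trees in $\mathcal T_k$ by their number of edges $j$ (so $|V(T)|=j+1$ and $0\le j\le k$), one bounds the three factors as follows. (i) In \eqref{psi}, $\prod_i y_i^{-d_i/2}\le\ve^{-\sum_i d_i/2}=\ve^{-j}$ on the domain and the ordered simplex has volume $(1-\ve)^{j+1}/(j+1)!\le 1/(j+1)!$, so $\psi(D(T),\ve)\le B_1^{\,j}/(j+1)!$ with $B_1=\max\{1,(2m\ve)^{-1}\}$. (ii) From $[m]^r=r!\binom{m+r-1}{r}\le r!\,2^{m+r}$, $[m]_r\le m^r$, and $\prod_v d_{in}(v)!\le\bigl(\sum_v d_{in}(v)\bigr)!=|E(T)|!=j!$, one gets $\varphi(T,m)\le j!\,B_2^{\,j}\le k!\,B_2^{\,k}$ with $B_2=2^m m$. (iii) Since $\mathcal M_{2k}(T)$ is the number of closed walks of length $2k$ on $T$ using every edge, $\mathcal M_{2k}(T)\le\Tr(A_T^{2k})\le(j+1)\,\rho(T)^{2k}$, and the spectral radius $\rho(T)$ of a tree of maximum degree $\Delta$ satisfies $\rho(T)\le 2\sqrt{\Delta-1}\le 2\sqrt{j}$ (the tree embeds into the infinite $\Delta$-regular tree, whose adjacency operator has norm $2\sqrt{\Delta-1}$, and $\Delta\le |E(T)|=j$); since there are at most $(j+1)^{j-1}$ ordered trees on $j+1$ vertices (Cayley), this yields $\sum_{T\in\mathcal T_k,\,|E(T)|=j}\mathcal M_{2k}(T)\le (j+1)^{j}\,4^k\,j^{k}$. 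Multiplying (i)--(iii), using $(j+1)^j\le e\,j^j$, $j^j\le e^j j!$, $j^k\le k^k\le e^k k!$ and $j\le k$, and summing over $0\le j\le k$, produces $C(2k,\ve,m)\le (1-\ve)^{-1}(k!)^2 B^k$ for a suitable $B=B(m,\ve)$; Carleman's condition follows, and with it the $M$-determinacy.

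I expect the main obstacle to be estimate (iii): the number of closed walks must be controlled even though a closed walk of length $2k$ can be far longer than twice the size of its (tree) union, and one must verify that the constants produced depend only on $m$ and $\ve$ and not on $k$. The tree spectral-radius bound $\rho(T)=O(\sqrt j)$ is exactly what does this: replacing it by the trivial $\rho(T)\le\Delta\le j$ would inflate the estimate to order $(k!)^3$ and destroy Carleman's condition. The remaining steps are elementary calculus and factorial arithmetic, and part (1) is immediate once the (easy direction of the) moment computation for the truncated graph is in hand.
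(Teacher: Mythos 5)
Your proof is correct, and for part (1) it is exactly the paper's route: the paper also postpones solvability and obtains it from the fact (Theorem \ref{main1}) that $C(k,\ve,m)$ is the limit of the moments of the genuine probability measures $\bar\mu_{\ve,m,n}$, so Hamburger positivity passes to the limit; there is no circularity, since the proof of Theorem \ref{main1} does not use Lemma \ref{unique}. For part (2) both arguments run through Carleman's condition, but your combinatorial estimate is genuinely different. The paper first reduces (in the appendix) to trees with exactly $k$ edges, then uses the exact walk-count identity $\mathcal M_{2k}(T)=2k\prod_{v}(d(v,T)-1)!$ together with Moon's formula (Lemma \ref{moon}) for the number of trees with a prescribed degree sequence, grouping trees by degree sequence and bounding $\varphi$ by factorials of degrees. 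You instead treat all $j\le k$ edge-counts uniformly, bound $\mathcal M_{2k}(T)\le\Tr(A_T^{2k})\le(j+1)\rho(T)^{2k}$ with the tree spectral-radius bound $\rho(T)=O(\sqrt{j})$, count trees by Cayley's formula, and extract the crucial $1/(j+1)!$ from the simplex volume hidden in $\psi$ rather than from Moon's multinomial. Both routes give $C(2k,\ve,m)\le (k!)^2B^k$-type growth, which is all Carleman needs; the paper's identity gives sharper constants (and is of some independent interest), while your argument is softer and avoids both the appendix reduction and the exact identity — indeed you could make it entirely elementary by replacing $\rho(T)\le2\sqrt{\Delta-1}$ with the trivial $\rho(T)\le\sqrt{2j}$, which also sidesteps the degenerate case $\Delta=1$ where the $2\sqrt{\Delta-1}$ bound fails (though your final inequality $\rho(T)\le2\sqrt{j}$ still holds there). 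Your diagnosis that the $O(\sqrt j)$ bound is the load-bearing step is accurate: with only $\rho\le\Delta\le j$ one gets $(k!)^3$-type growth and Carleman fails. The only blemishes are cosmetic constants, e.g.\ $B_2$ should be of the form $2^{m+1}m$ because $\prod_{v\in V^-(T)}2^{m+d_{in}(v)}\le 2^{m|V^-(T)|+j}$ with $|V^-(T)|\le j$; this is absorbed into your ``suitable $B(m,\ve)$'' and does not affect the conclusion.
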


\begin{proof}[Proof of Lemma \ref{unique}]We postpone the proof of part 1. For part 2, note that Carleman's condition requires for the moments to not ``grow too fast". Write  $C_{2k}= C(2k,\ve,m)$ for shorthand. We will show:
	$$
	\limsup_{k\rightarrow \infty}\frac{C_{2k}^{1/2k}}{2k} \leq C
	$$for some constant $C$. 
	We will assume that $\mathcal T_k$ is the set of labeled trees with exactly $k$ edges (as opposed to the set of trees with \textit{at most} $k$ edges, to see why this is enough see Appendix). To bound $\psi(D(T), \ve)$  let $D(T) = (d_1,\ldots, d_{k+1})$. Then:
	\begin{align*} 
	\psi(D(T), \ve) 
	&=\frac{1}{2^k} \lim_{n\rightarrow \infty} \frac{1}{n} \sum_{\ve n\leq x_1< \ldots \leq x_{k+1} \leq n} \prod_{i=1}^{k+1} \frac{1}{x_i^{d_i/2}}\\
	&\leq \frac{1}{2^k} \lim_{n\rightarrow \infty} \frac{1}{n} \prod_{i=1}^{k+1} \sum_{x= \ve n}^n \frac{1}{x^{d_i/2}} 
	\end{align*}Note that
	\begin{equation}\label{sums}
	\sum_{ x= \ve n}^n \frac{1}{x^{d_i/2}} \leq 
	\begin{cases}
	2n^{1-d_i/2} &\text{ if $d_i = 1$} \\
	\log(1/\ve)n^{1-d_i/2} &\text{ if $d_i=2$}\\
	2 n^{1-d_i/2} \ve^{-d_i/2}&\text{ if $d_i\geq 3$}
	\end{cases}
	\end{equation}Thus, 
	\begin{align*}
	\psi(D(T),\ve)
	&\leq \frac{1}{2^k} \lim_{n\rightarrow \infty} \frac{1}{n} \left(\frac{2\log(1/\ve)}{\sqrt{\ve}} \right)^{k+1} n^{k+1 - \sum_i d_i/2} \\
	&=\frac{1}{2^k}\left(\frac{2\log(1/\ve)}{\sqrt{\ve}} \right)^{k+1} \lim_{n\rightarrow \infty} \frac{n^{k+1 -k} }{n} \\ 
	&= \frac{1}{2^k}\left(\frac{2\log(1/\ve)}{\sqrt{\ve}} \right)^{k+1}.
	\end{align*}Hence, $\psi(D(T),\ve) \leq \tilde C^k$ for an appropiate constant $\tilde C$. Hence, we focus on bounding: 
	$$
	\sum_{T\in T_k} \varphi(T,m) \mathcal M_{2k}(T) 
	$$In the Appendix, we show the following combinatorial identity: 
	\begin{align}\label{walk count}
	\mathcal M_{2k}(T) = (2k) \prod_{v\in V(T)} (d(v,T) -1) !
	\end{align}Also note that 
	$$
	\varphi(T,m) \leq \prod_{v\in V(T)} [m]^{d(v, T)}\leq (m+1)^{2k} \prod_{v\in V(T)} (d(v,T)-1)!
	$$Hence, we can bound the terms by functions that only depend on the degree sequence of $T$. Let $\mathcal D$ denote the set of degree sequences of trees on $k+1$ vertices. Then, 
	$$
	\sum_{T\in T_k} \varphi(T,m) \mathcal M_{2k}(T) \leq 2k(m+1)^{2k}\sum_{D \in \mathcal D}|\{ T\in \mathcal T_k : D(T) = D\}| \left(\prod_{d_i \in D} (d_i - 1)! \right)^2
	$$
	By Lemma \ref{moon}, 
	$$
	|\{ T\in \mathcal T_k : D(T) = D\}| = { k-1 \choose d_1-1, d_2-1, \ldots, d_{k+1}-1}
	$$Thus, 
	\begin{align}
	\sum_{T\in \mathcal T_k} \mathcal M_{2k}(T)\varphi(T,m)
	&\leq 2k(m+1)^{2k}(k-1)! \sum_{D\in \mathcal D} \prod_{d_i\in D} (d_i-1)! \nonumber\\
	&= 2k(m+1)^{2k}((k-1)!)^2 \sum_{D\in \mathcal D} \frac{1}{{ k-1 \choose d_1-1, d_2-1, \ldots, d_{k+1}-1}} \nonumber\\ 
	&\leq 2k(m+1)^{2k}((k-1)!)^2 |\mathcal D|\nonumber\\
	&= 2k(m+1)^{2k}((k-1)!)^2 {2k \choose k+1}\nonumber\\
	&\leq 2k(2(m+1))^{2k} ((k-1)!)^2\nonumber\\
	&=2k(m+1)^{2k}((k-1)!)^2\nonumber\\
	&\leq 2k(2k(m+1))^{2k}.\label{final bound}
	\end{align}Hence, we can bound the desired limit by $\sqrt{\tilde{C}}(m+1)$ which proves the lemma. 
	
\end{proof}
\begin{theorem}\label{main1}Let $0<\ve <1$ be fixed. Let $G_{\ve,m,n}$ be the $\ve$-truncation of a random PA-graph. Let $\{C(k,\ve,m)\}$ be the sequence of constants as defined above. Then for any fixed $k$ we have: 
	\begin{align*}
	\BE\left[ \bar{\mu}_{\ve,m,n}^k\right] \longrightarrow C(k,\ve,m).
	\end{align*}
\end{theorem}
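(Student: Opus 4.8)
The plan is to expand the $k$-th spectral moment of the truncated graph via formula \eqref{muk} and then match each term to the corresponding summand of $C(k,\ve,m)$. Concretely, write
\[
\BE\left[\bar\mu_{\ve,m,n}^k\right] = \frac{1}{n-\ve n}\sum_{H\in\mathcal F(\cK_{m,n})}\mathcal M_k(H)\,\BE[X_{m,n}(H,[\ve n,n])],
\]
so the factor $1/(n-\ve n) = \frac{1}{1-\ve}\cdot\frac1n$ already explains the prefactor $\frac{1}{1-\ve}$ in the formula for $C(2k,\ve,m)$. The sum runs over all connected ordered graphs $H$ arising from closed walks of length $k$. The core of the argument is to show that, after dividing by $n$, the only graphs $H$ that contribute in the limit are \emph{ordered trees with at most $k$ edges}; all others vanish. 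This is where Theorem \ref{mag} does the work: a connected graph $H$ on $t$ vertices with $r$ edges satisfies $r\geq t-1$, with equality iff $H$ is a tree, and a graph with a cycle has strictly fewer degree-one vertices, hence $\BE[X_{m,n}(H)] = O(n^{f(H)/2}\log^{g(H)}n)$ with $f(H)\leq$ (tree bound) $-$ (correction for cycles). I would check that for any non-tree $H$ the exponent of $n$ after the division drops below the tree case, so its contribution is $o(1)$; and simultaneously that loops (which inflate the count only by $\log n$ factors and not powers of $n$) also contribute $o(1)$ relative to the genuine tree terms — or rather, I should be careful: loops do appear and must be bounded out, precisely because the measure lives on $[\ve n,n]$ and $\sum_{x\geq \ve n}1/x = O_\ve(1)$, so loop contributions are $O(1/n)\to 0$ after normalization. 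So in the truncated regime, unlike the full graph, the only surviving contributions are loopless trees.

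Next, for each fixed ordered tree $T$ with $t$ vertices and $t-1$ edges, I would compute $\lim_n \frac1n\BE[X_{m,n}(T,[\ve n,n])]$ using Lemma \ref{to-use}. Since $T\subset[\ve n,n]$ with $\ve n\to\infty$, the error factor $1\pm O(1/(\ve n))$ is negligible, and
\[
\BE[X_{m,n}(T,[\ve n,n])] = (1\pm o(1))\frac{\varphi(T,m)}{(2m)^{t-1}}\sum_{\ve n\leq x_1<\cdots<x_t\leq n}\prod_{i=1}^t\frac{1}{x_i^{d(v_i,T)/2}},
\]
where the exponents $d(v_i,T)$ form the degree sequence $D(T)$ with $\sum d_i = 2(t-1)$. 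Dividing by $n$ and taking $n\to\infty$, the Riemann-sum computation recorded just before the theorem statement gives exactly $\varphi(T,m)\,\psi(D(T),\ve)$. Summing over all ordered trees $T$ with at most $k$ edges, weighted by $\mathcal M_k(T)$ (the number of closed walks of length $k$ on $\cK_{m,n}$ realizing $T$), reproduces $\frac{1}{1-\ve}\sum_{T\in\mathcal T_k}\varphi(T,m)\mathcal M_k(T)\psi(D(T),\ve)$. For odd $k$, every closed walk of odd length must traverse an odd cycle or a loop; a loopless tree admits no closed walk of odd length, so $\mathcal M_{2k+1}(T)=0$ for every tree $T$, giving $C(2k+1,\ve,m)=0$ as claimed.

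The main obstacle I anticipate is \emph{uniformity of the error terms across the summation}. There are $\Theta_k(1)$ ordered trees of each size but the constants hidden in the $O(\cdot)$ of Theorem \ref{mag} and Lemma \ref{to-use} depend on $m$, $\ve$, and the tree; since $k$ is fixed this is harmless, but one must be careful that the $o(1)$ coming from non-tree graphs is genuinely negligible \emph{after} the division by $n$ rather than before — i.e., that there is no conspiracy where many small contributions accumulate. Because the number of isomorphism classes of connected ordered graphs from closed walks of length $k$ is bounded (by a function of $k$ alone), and each non-tree class contributes $O(n^{-\eta})$ for some $\eta=\eta(k)>0$, the total error is still $o(1)$; I would make this explicit by separating the finite sum into the tree part and the rest, bounding the rest term-by-term. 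A secondary point is justifying the passage from $\BE[X_{m,n}(T,[\ve n,n])]$ to the integral: the sum $\frac1n\sum_{\ve n\leq x_1<\cdots<x_t\leq n}\prod x_i^{-d_i/2}$ converges to the iterated integral $\frac1n\cdot n^{t-\sum d_i/2}\cdot(\text{integral}) = \int_\ve^1\cdots\,dy_1\cdots dy_t$ precisely because $t - \sum d_i/2 = t-(t-1) = 1$, which is the exact arithmetic that makes a tree (and only a tree) give a nonzero finite limit; verifying this convergence is routine but deserves a sentence, as does noting that since $d_i$ can be $1$ the integrand $y_i^{-1/2}$ is still integrable on $[\ve,1]$.
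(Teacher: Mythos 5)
Your proposal is correct and follows essentially the same route as the paper: expand the trace via \eqref{muk}, use Lemma \ref{to-use} together with the bound $t-|E(H)|\leq 1$ (equality iff $H$ is a tree) to kill non-tree and loop contributions after dividing by $n-\ve n$, and pass from the Riemann sum to $\varphi(T,m)\psi(D(T),\ve)$ for each ordered tree, with odd moments vanishing because trees carry no odd closed walks. The only cosmetic difference is that the paper bounds the non-tree terms directly by summing $\prod_i x_i^{-d_i/2}$ over $[\ve n,n]$ rather than invoking Theorem \ref{mag}, which changes nothing of substance.
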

\begin{proof}
	We need to compute 
	$$ \BE[\Tr(A_{G_{\ve,m,n}}^k)]$$
	From equation \eqref{muk}, we see that this is equivalent to 
	\begin{equation}\label{e4}
	\BE[\Tr(A_{G_{\ve,m,n}}^k)] = \frac{1}{n-\ve n}\sum_{H\in \mathcal F(K_{m,n-\ve n})} \mathcal M_k(H) \BE[X(H, G_{\ve,m,n})]
	\end{equation}
	Let $H\in \mathcal F(K_{m,n-\ve n})$ be a graph on $t$ vertices. Let $V = \{x_1<\ldots <x_t\}$ be a subset of $[\ve n, n]$. Then by equation \eqref{prob 2} we have: 
	$$
	\BE[X(H, V)] = \left( 1 \pm O\left(\frac{1}{\ve n}\right) \right) \frac{\varphi(H,m)}{(2m)^{|E(H)|}} \prod_{i=1}^t \frac{1}{x_i^{d(v_i,H)/2}}
	$$
	Hence, 
	$$
	\BE[X(H, G_{\ve,m,n})] = \Theta\left(\sum_{\ve n\leq x_1<\ldots<x_t\leq n} \prod_{i=1}^t \frac{1}{x_i^{d(v_i,H)/2}}\right) = O \left(\prod_{i=1}^t \sum_{x = \ve n}^n \frac{1}{x^{d(v_i,H)/2}}\right)
	$$
	Using the same bounds as in equation \eqref{sums}, we obtain: 
	$$
	\BE[X(H, G_{\ve,m,n})] = O\left(n^{t - \sum_i d(v_i,H)/2}\right)= O\left(n^{t -|E(H)|}\right)
	$$
	Since $H$ is a connected graph on $t$ vertices we have $t- |E(H)| \leq 1$, with equality if and only if $H$ is isomorphic to a tree. Hence, in equation \eqref{e4} if we consider the sum over graphs which are not isomorphic to trees we will get a contribution of $O(1/n)$. In particular, note that all the odd moments vanish (since they must contain a cycle). Note that if $H$ is an ordered tree on $t+1$ vertices then we have: 
	\begin{align*}
	\frac{1}{n}\BE[X(H,G_{\ve,m,n})] &= \frac{1}{n}(1 \pm o(1))\sum_{\ve n \leq x_1<\ldots < x_{t+1}\leq n} \frac{\varphi(H,m)}{(2m)^{t}}\prod_{i=1}^{t+1} \frac{1}{x_i^{d(v_i,H)/2}}\\
	&\rightarrow \varphi(H,m) \psi(D(H),\ve)
	\end{align*}
	So to compute the even moments, we obtain:
	\begin{align*}
	\BE[\Tr(A^{2k}_{G_{\ve,m,n}})] 
	&= o(1)+ \frac{1}{1-\ve} \sum_{H \in \mathcal T_k}\mathcal M_{2k}(H)\frac{1}{n} \BE[X(H,G_{\ve,m,n})]\\
	&\rightarrow \frac{1}{1-\ve}\sum_{H\in \mathcal T_k} \mathcal M_{2k}(H)\varphi(H,m) \psi(D(H),\ve)\\
	&= C(2k, \ve, m)
	\end{align*}as desired.
\end{proof}
\noindent Now that we have shown that our sequence of constants correspond to the limiting moments of the expected ESD, then we have that Hamburguer's criterion must be satisfied: The sequence $\{\bar{\mu}_{\ve,m,n}\}_{n=1}^\infty$ satisfies equation \eqref{hamb}. In particular, it follows that their limit, $C(k,\ve, m)$, also satisfies \eqref{hamb}. Since we already showed that the sequence is $M$-determinate, we have: 

\begin{coro}There exists a unique probability measure with moments equal to $\{C(k,\ve,m)\}_{k\geq 0}$. 
\end{coro}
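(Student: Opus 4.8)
The plan is to deduce this from two ingredients already established: the existence half of the Hamburger moment problem and the $M$-determinacy proved in Lemma \ref{unique}(2). First I would invoke Theorem \ref{main1}: for every fixed $k$ the constant $C(k,\ve,m)$ is the limit as $n\to\infty$ of $\BE[\bar\mu_{\ve,m,n}^k]$, the $k$-th moment of the \emph{genuine} probability measure $\bar\mu_{\ve,m,n}$. Since each $\bar\mu_{\ve,m,n}$ is an honest probability measure, its moment sequence automatically satisfies the Hamburger positivity condition \eqref{hamb}: for every finitely supported complex sequence $\{c_j\}$,
$$ \sum_{r,s\ge0}\BE\bigl[\bar\mu_{\ve,m,n}^{\,r+s}\bigr]c_r\bar c_s = \BE\Bigl[\int\Bigl|\sum_r c_r x^r\Bigr|^2 d\mu_{\ve,m,n}\Bigr]\ge 0. $$
Fixing such a $\{c_j\}$, the left-hand side is a \emph{finite} linear combination of moments, so letting $n\to\infty$ and applying Theorem \ref{main1} termwise gives $\sum_{r,s\ge0}C(r+s,\ve,m)c_r\bar c_s\ge0$; i.e., the limiting sequence satisfies \eqref{hamb}. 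Hamburger's theorem then produces a positive Borel measure $\mu$ with $\int x^k\,d\mu=C(k,\ve,m)$ for all $k$, and since $C(0,\ve,m)=\lim_n\BE[\bar\mu_{\ve,m,n}^0]=1$, this $\mu$ is a probability measure. (In passing this discharges the postponed part (1) of Lemma \ref{unique}.)

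For uniqueness I would simply quote Lemma \ref{unique}(2): the sequence $\{C(k,\ve,m)\}$ is $M$-determinate. Concretely, the estimate \eqref{final bound} together with the bound $\psi(D(T),\ve)\le\tilde C^{\,k}$ yields $C(2k,\ve,m)\le\frac{1}{1-\ve}\,\tilde C^{\,k}\cdot 2k\,(2k(m+1))^{2k}$, hence $C(2k,\ve,m)^{1/2k}/(2k)$ stays bounded (by roughly $\sqrt{\tilde C}(m+1)$), so Carleman's condition \eqref{condition} holds and no other measure on $\mathbb R$ has $\{C(k,\ve,m)\}$ as its moments. Combining the existence of $\mu$ with this uniqueness gives the corollary.

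At this stage the statement is essentially bookkeeping; the only point worth flagging is that solvability is obtained \emph{indirectly}, by realizing $C(k,\ve,m)$ as a pointwise limit of genuine moment sequences rather than verifying \eqref{hamb} combinatorially — the mechanism being that \eqref{hamb}, for each fixed finitely supported $\{c_j\}$, is a closed condition stable under limits. I do not expect a real obstacle beyond stating the interchange of the limit with the finite sum in \eqref{hamb} cleanly.
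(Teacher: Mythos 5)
Your proposal is correct and matches the paper's own argument: the paper likewise deduces solvability by observing that each $\bar\mu_{\ve,m,n}$ is a genuine probability measure whose moment sequence satisfies \eqref{hamb}, passes to the limit via Theorem \ref{main1} (which is precisely how the postponed part (1) of Lemma \ref{unique} is discharged), and then invokes the Carleman/$M$-determinacy part of Lemma \ref{unique} for uniqueness. Your explicit remark about interchanging the limit with the finite sum in \eqref{hamb} is a slightly more careful statement of the step the paper treats implicitly.
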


We denote this measure by  $\mu_{\ve,m,\infty}$.
\noindent Hence, to finish the proof of Theorem \ref{theorem:limit}, we need to show that $\{\mu_{\ve,m,n}\}$ converge weakly in probability to $\mu_{\ve,m,\infty}$. We do this by showing  that the random measures are concentrated around the expected measure. 
\begin{lemma}\label{main2}Let $G_{\ve, m,n}$ be an $\ve$-truncated PA random graph, and denote its ESD by $\mu_{\ve,m,n}$. Then for any fixed $k, \delta>0$ 
	$$	
	\lim_{n\rightarrow \infty} \BP\left[ \left|\BE[\mu_{\ve,m,n}^k] - \BE[\bar\mu_{\ve,m,n}^k]\right|> \delta \right] = 0
	$$
\end{lemma}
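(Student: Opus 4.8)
The plan is to establish concentration of $\BE[\mu_{\ve,m,n}^k] = \frac{1}{n-\ve n}\Tr(A_{G_{\ve,m,n}}^k)$ around its mean by a second-moment (variance) argument. Using equation \eqref{muk}, write
\[
\BE[\mu_{\ve,m,n}^k] = \frac{1}{n-\ve n}\sum_{H\in\mathcal F(\cK_{m,n-\ve n})}\mathcal M_k(H)\,X(H,G_{\ve,m,n}),
\]
so it suffices to show that $\Var\!\big(\tfrac{1}{n}\Tr(A_{G_{\ve,m,n}}^k)\big)\to 0$; then Chebyshev's inequality finishes the job. Since $k$ is fixed and $\mathcal M_k(H)$ is a bounded constant for each of the finitely many relevant (bounded-size) ordered graphs $H$, it is enough to bound $\Var\big(X(H,G_{\ve,m,n})\big) = o(n^2)$ for each such $H$, and similarly control the covariances $\mathrm{Cov}\big(X(H_1,G_{\ve,m,n}),X(H_2,G_{\ve,m,n})\big)$.

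First I would expand the variance of a single count:
\[
\Var\big(X(H,G_{\ve,m,n})\big) = \sum_{\tilde H_1,\tilde H_2}\Big(\BP[\tilde H_1\cup\tilde H_2\subset G_{\ve,m,n}] - \BP[\tilde H_1\subset G_{\ve,m,n}]\BP[\tilde H_2\subset G_{\ve,m,n}]\Big),
\]
the sum over ordered pairs of copies $\tilde H_1,\tilde H_2$ of $H$ in $\cK_{m,n-\ve n}$. The key split is by $J = V(\tilde H_1)\cap V(\tilde H_2)$. When $J=\emptyset$, Lemma \ref{neg corr} (negative correlation of disjoint graphs) gives that the corresponding summand is $\le 0$, so these pairs contribute nothing positive. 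When $|J|\ge 1$ the union $\tilde H_1\cup\tilde H_2$ is a connected-component-wise bounded ordered graph on at most $2t-1$ vertices (where $t=|V(H)|$), and the number of such pairs with a prescribed overlap pattern, weighted by $\BP[\tilde H_1\cup\tilde H_2]$, is exactly an expected graph count $\BE[X_{m,n}(H',V)]$-type quantity to which Theorem \ref{mag} applies. The standard heuristic is that "gluing" two trees along $\ge 1$ vertex loses at least one degree-one vertex relative to the disjoint union, so the magnitude drops from $n^{2(t-|E(H)|)} = n^2$ (for trees) to $O(n^{2-1}\log^{O(1)}n) = o(n^2)$; one needs to check this for every overlap pattern, including overlaps at internal vertices, using $f(H'_{\mathrm{glued}}) \le f(H_1)+f(H_2)-1$ and possibly gaining a log factor which is harmless against the $n$ saved. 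The covariance between distinct $H_1,H_2$ is handled identically, again with the disjoint-overlap term nonpositive by Lemma \ref{neg corr}.

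The main obstacle I anticipate is the bookkeeping in the $|J|\ge 1$ case: one must verify uniformly over all overlap patterns — in particular when the glued graph has vertices of degree $2$ creating extra $\log n$ factors, or when identifying vertices forces edge multiplicities beyond what $\cK_{m,n}$ allows — that the resulting count is genuinely $o(n^2)$ and not, say, $\Theta(n^2/\log n)$ in a way that still defeats Chebyshev (it does not, since we only need $o(n^2)$, but the casework to see $f$ strictly drops by at least one must be done carefully). A clean way to organize this is to note that any connected ordered graph arising as (a component of) the glued object has $f(H')/2 - (\text{number of components}) < t - |E(H)|$ strictly whenever a genuine identification occurs, so after dividing by $n^2$ every surviving term is $O(n^{-\eta}\log^{O(1)}n)$ for some $\eta>0$; summing the finitely many patterns and finitely many relevant $H$ then gives $\Var\big(\tfrac1n\Tr(A^k)\big) = o(1)$, and Chebyshev completes the proof. (The factor $\tfrac{1}{n-\ve n} = \Theta(\tfrac1n)$ is immaterial since $\ve$ is fixed.)
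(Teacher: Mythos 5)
Your overall skeleton is the same as the paper's: write the $k$-th moment as a weighted sum of subgraph counts, bound the variance, use Lemma \ref{neg corr} to discard the vertex-disjoint pairs (their covariance contribution is nonpositive), bound the overlapping pairs, and finish with Chebyshev. The paper does exactly this. However, the step where you bound the overlapping pairs has a genuine gap: you estimate the glued unions with Theorem \ref{mag}, i.e.\ with the \emph{untruncated} magnitude $O(n^{f(H')/2}\log^{g(H')}n)$ governed by the number of degree-one vertices, and you claim that losing one degree-one vertex brings the contribution down to $o(n^2)$. That is false for higher moments. For instance, for the $2k$-th moment with $k\ge 3$, take $H$ a star with $k$ edges and glue two copies at a leaf: the glued graph has $f=2k-2$ and $g=1$, so Theorem \ref{mag} only gives $O(n^{k-1}\log n)$, which is not $o(n^2)$ already at $k=3$ and grows without bound thereafter. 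Likewise your ``clean'' organizing inequality $f(H')/2-(\#\text{components})<t-|E(H)|$ fails for this example ($k-2\not<1$ once $k\ge 3$). The $f$-based exponent is simply the wrong invariant here.

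What rescues the argument --- and what the paper actually uses --- is that all the copies in question live in the truncated range $[\ve n,n]$, so by Lemma \ref{to-use} (equivalently, the computation in the proof of Theorem \ref{main1}) each edge of a prescribed copy contributes $O\bigl(1/(\ve n)\bigr)$, and hence any connected ordered graph $H'$ on $t'$ vertices with $e'$ edges has expected truncated count $O_\ve\bigl(n^{t'-e'}\bigr)$. Since the union of two overlapping copies is connected, $t'-e'\le 1$, so the whole overlapping sum is $O(n)$; divided by $(n-\ve n)^2$ this gives a variance of $O(1/n)$, and Chebyshev concludes. In other words, the controlling exponent in the truncated graph is $t'-e'$ (vertices minus edges), not $f(H')/2$, and no log-factor bookkeeping or case analysis over overlap patterns is needed. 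If you replace your invocation of Theorem \ref{mag} by this truncated-count bound, your proof becomes correct and coincides with the paper's.
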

\begin{proof}We compute the second moment as follows:
	\begin{align*}
	\BE\left[ (\BE[\mu_{\ve,m,n}^k] - \BE[\bar\mu_{\ve,m,n}^k])^2 \right] &= \BE\left[\left( \BE [\mu_{\ve,m,n}^k]\right)^2\right] - \left(\BE[\bar\mu_{\ve,m,n}^k]\right)^2\\
	&= \BE\left[\frac{1}{(n-\ve n)^2} \left(\Tr(A^k_{G_{\ve,m,n}})\right)^2\right] - \left( \frac{1}{n-\ve n} \BE\left[\Tr(A^k_{G_{\ve,m,n}})\right] \right)^2\\
	&= \frac{  1  } {(n-\ve n)^2} \left(\BE\left[\left(\sum_{H\in \mathcal C(G_{\ve,m,n})}\mathcal M_k(H) \mathbb I_{H}\right)^2\right] -\left(\sum_{H\in \mathcal C(G_{\ve,m,n})} \mathcal M_k(H) \BE[\mathbb I_H]\right)^2 \right)\\
	&=\frac{  1  } {(n-\ve n)^2}\sum_{H_1,H_2\in \mathcal C(G_{\ve,m,n})}\mathcal M_k(H_1) \mathcal M_k(H_2)\left(\BE[\mathbb I_{H_1}\mathbb I_{H_2}] - \BE[\mathbb I_{H_1}]\BE[\mathbb I_{H_2}]\right)
	\end{align*}If $H_1$ and $H_2$ are vertex disjoint, then by Lemma \ref{neg corr} we have $$\BE[\mathbb I_{H_1}\mathbb I_{H_2}] - \BE[\mathbb I_{H_1}]\BE[\mathbb I_{H_2}]
	\leq 0$$If $H_1$ and $H_2$ are not vertex disjoint, then $\mathbb I_{H_1}\mathbb I_{H_2}$ corresponds to the indicator random variable that a connected labeled graph appears. As we saw in the above proof, connected graphs give a contribution of $O(n)$. Thus,
	\begin{align*}
	& \sum_{H_1\cap H_2\neq \emptyset}  \mathcal M_k(H_1) \mathcal M_k(H_2)\left(\BE[\mathbb I_{H_1}\mathbb I_{H_2}] -\BE[\mathbb I_{H_1}]\BE[\mathbb I_{H_2}]\right) \\
	&\leq \sum_{H_1\cap H_2\neq \emptyset}  \mathcal M_k(H_1) \mathcal M_k(H_2)\BE[\mathbb I_{H_1}\mathbb I_{H_2}] \\
	&= O(n).
	\end{align*} We thus obtain
	$$
	\BE\left[ (\BE[\mu_{\ve,m,n}^k] - \BE[\bar\mu_{\ve,m,n}^k])^2 \right] = O(n^{-1})
	$$which implies the lemma  via Chebyshev's inequality. 
\end{proof}
\noindent We finish off the proof of Theorem \ref{theorem:limit}:

\begin{proof}By Lemma \ref{conv in moments}, it is enough to show that the $k$-th moments of $\{\mu_{\ve,m,n}\}$ converge in probability to the $k$-th moments of $\mu_{\ve,m,\infty}$. By Theorem \ref{main1}: $$|\BE[\bar\mu_{\ve,m,n}^k - C(k,\ve,m)|\rightarrow 0$$ deterministically. By Lemma \ref{main2}:
	$$
	|\BE[\mu_{\ve,m,n}^k] - \BE[\bar\mu_{\ve,m,n}^k]| \rightarrow 0 
	$$in probability. We get, 
	$$
	|\BE[\mu_{\ve,m,n}^k] - C(k,\ve,m)|\rightarrow 0
	$$in probability as desired.

\end{proof}

\section{Edge Eigenvalues}\label{large evals}
\noindent In this section we prove Theorem \ref{edge1}. As we saw in previous sections, the main contribution towards the even moments come from stars where the high degree appears first. It turns out, that these stars will also dictate the behavior of the largest eigenvalues. 
\\
\\
\noindent In order to show Theorem \ref{edge1}, we will need control on the degrees, in particular we need to show they do not deviate too much from their mean. One could prove these results using the tools developed above, but fortunately some of the bounds needed were already done in \cite{FFF}, so rather than reprove them we will simply state them. 
\\
\\
For a graph $G$, we will write $\|G\|$ for the operator norm of the adjacency matrix. Note that this quantity is always equal to $\lambda_1(G)$ by the Perron-Frobenius Theorem. 
\begin{theorem}[Theorem 1.1 in \cite{FFF}]\label{delta1}Let $t\rightarrow \infty$. Then whp we have $\Delta_1(G_{m,t}) \leq \sqrt{t} \log t$.
\end{theorem}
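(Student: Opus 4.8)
The plan is to prove this by a first--moment union bound over the $t$ vertices. The key observation is that a vertex $i$ of degree $D$ in $G_{m,t}$ is forced to contain many copies of the $2$--edge star rooted at its smallest vertex --- the ordered graph with vertices $v_1<v_2<v_3$ and edges $\{v_1,v_2\},\{v_1,v_3\}$, which is exactly the graph $H_1$ in the length--two path example --- with $i$ in the role of $v_1$; on the other hand the expected number of copies of this star centered at a fixed $i$ is only $O(t/i)$, which is summable enough over $i$ to absorb a $\log^2 t$ denominator.

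First I would record the deterministic reduction. Let $N_i$ be the number of vertices $j>i$ joined to $i$ by at least one edge in $G_{m,t}$. Writing $d(i,G_{m,t})=\ell_i+d_{in}(i,G_{m,t})+d_{out}(i,G_{m,t})$ with $\ell_i$ the number of loops at $i$, and using that $\cK_{m,t}$ has only $m(m+1)/2$ loops per vertex and that the out--degree of $i$ is at most $m$ (each of the $m$ sub--vertices of $i$ in $G_{1,mt}$ has out--degree exactly one), both $\ell_i$ and $d_{out}(i,G_{m,t})$ are $O(1)$, so $d_{in}(i,G_{m,t})\ge d(i,G_{m,t})-O(1)$. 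Since $\cK_{m,t}$ has at most $m^2$ parallel edges between $i$ and any larger vertex, $N_i\ge d_{in}(i,G_{m,t})/m^2$, and choosing two of these $N_i$ neighbours together with one edge to each gives, for $D\ge m^2+3m$ say,
\[
\#\{\text{copies of }H_1\text{ in }G_{m,t}\text{ centered at }i\}\ \ge\ \binom{N_i}{2}\ \ge\ c_m\,D^2\qquad\text{whenever}\quad d(i,G_{m,t})\ge D,
\]
for a constant $c_m>0$ depending only on $m$.

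Next I would estimate the first moment. By equation \eqref{bound}, the core estimate behind Theorem \ref{mag}, and since $H_1$ has one vertex of degree $2$ and two of degree $1$, $\BE[X_{m,t}(H_1,\{i,x_1,x_2\})]=\Theta(i^{-1}x_1^{-1/2}x_2^{-1/2})$ with the implied constant depending only on $m$ (the correction factor $\exp(O(\sum_i C_S(i)^2/i))$ in \eqref{prob} is $\Theta(1)$ uniformly over bounded graphs, so there is no loss from $i$ being small). Summing over $i<x_1<x_2\le t$ and using $\sum_{x\le t}x^{-1/2}=O(\sqrt t)$ gives $\BE[\#H_1\text{ centered at }i]=O(t/i)$. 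Combining with the reduction, Markov's inequality, and $D=\sqrt t\log t$,
\[
\BP\bigl[d(i,G_{m,t})\ge \sqrt t\log t\bigr]\ \le\ \frac{\BE[\#H_1\text{ centered at }i]}{c_m\,t\log^2 t}\ =\ O\!\left(\frac{1}{i\log^2 t}\right),
\]
and a union bound over $i=1,\dots,t$ yields $\BP[\Delta_1(G_{m,t})\ge \sqrt t\log t]=O\bigl((\log t)/\log^2 t\bigr)=o(1)$, i.e.\ $\Delta_1(G_{m,t})\le\sqrt t\log t$ whp.

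I expect the only genuinely delicate step to be the deterministic reduction: one has to be careful that the degree of a vertex in this multigraph is not concentrated inside a few high--multiplicity edges or inside loops, i.e.\ that degree $D$ really produces $\Omega(D)$ distinct larger neighbours; since $m$ is fixed this is a short counting argument but it must be done honestly. I would also note that the argument has slack --- replacing $H_1$ by the $3$--edge star makes the relevant series $\sum_i i^{-3/2}$ convergent, so $\log t$ could in fact be replaced by any function tending to infinity --- but $H_1$ already gives the stated bound.
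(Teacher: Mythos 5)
Your argument is correct, but note that the paper does not prove this statement at all: it is imported verbatim as Theorem 1.1 of \cite{FFF} (the text around it explicitly says the needed degree bounds are taken from that reference rather than reproved). So what you have produced is a genuinely independent, self-contained proof using only the paper's own machinery, namely the subgraph-probability formula \eqref{prob} and the graph-count estimate \eqref{bound}. In substance your cherry-counting step is a second-moment (Chebyshev) bound on each degree: the expected number of ordered copies of $H_1$ with smallest vertex $i$ is $\Theta(t/i)$, which is consistent with the paper's own computation $b_{n,i}^2=\BE[d(i,G_{1,n})^2]=\Theta(n/i)$ in the Appendix (equation \eqref{2nd moment}), so the first-moment estimate is solid, and the correction factor in \eqref{prob} is indeed uniformly $\Theta(1)$ for bounded graphs even when $i$ is small. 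The deterministic reduction also checks out, and is in fact slightly cleaner than you state: since all edges between block $i$ and a block $j>i$ of $G_{1,mt}$ are out-edges of block $j$, the multiplicity between $i$ and $j$ is at most $m$ (not just $m^2$), while loops and out-edges at $i$ contribute only $O_m(1)$ to $d(i,G_{m,t})$; hence $d(i,G_{m,t})\ge D$ forces $N_i\ge (D-O_m(1))/m$ distinct larger neighbours and at least $\binom{N_i}{2}=\Omega_m(D^2)$ ordered copies of $H_1$ rooted at $i$, which with Markov and the union bound gives failure probability $O(1/\log t)=o(1)$ exactly as you claim. Compared with citing \cite{FFF}, your route buys a short, elementary proof within the paper's framework (and, as you observe, using the $3$-edge star would even improve $\log t$ to any $\omega(t)\to\infty$); what it does not give is the finer information in \cite{FFF} (e.g.\ lower bounds and gaps for the top degrees) that the paper also uses elsewhere.
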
 
\begin{theorem}[Theorem 1.2 in \cite{FFF}]\label{pa norm}Let $t\rightarrow \infty$. Then whp we have $\|G_{m,t}\| \leq t^{1/4} \sqrt{\log t}$. 
\end{theorem}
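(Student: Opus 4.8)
Since this is quoted as Theorem 1.2 of \cite{FFF}, the paper itself only cites it; here is the route I would take given the tools already assembled above. The plan is the trace (moment) method. Because $A_{G_{m,t}}$ is a nonnegative matrix, Perron--Frobenius gives $\|G_{m,t}\| = \lambda_1(A_{G_{m,t}})$, so for every fixed positive integer $k$,
$$\|G_{m,t}\|^{2k} = \lambda_1(A_{G_{m,t}})^{2k} \le \sum_{i} \lambda_i(A_{G_{m,t}})^{2k} = \Tr\!\big(A_{G_{m,t}}^{2k}\big) = |\mathcal W_{2k}(G_{m,t})|.$$
Thus it suffices to exhibit a single fixed $k$ for which $\Tr(A_{G_{m,t}}^{2k}) \le t^{k/2}(\log t)^{k}$ with probability $1-o(1)$; taking $2k$-th roots then gives exactly $\|G_{m,t}\| \le t^{1/4}\sqrt{\log t}$.

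The required second-moment input is already in hand. Using $\BE[\bar\mu_{m,t}^{2k}] = \tfrac1t \BE[\Tr(A_{G_{m,t}}^{2k})]$ together with Theorem~\ref{moments thm}, for any fixed $k\ge 3$ one has $\BE[\Tr(A_{G_{m,t}}^{2k})] = \Theta(t^{k/2})$ (and already $\BE[\Tr(A_{G_{m,t}}^{4})] = \Theta(t\log t)$ would suffice). Fix, say, $k=3$, so that $\BE[\Tr(A_{G_{m,t}}^{6})] \le C_{m}\,t^{3/2}$ for a constant $C_m$. Markov's inequality gives, for any $\omega=\omega(t)\to\infty$,
$$\BP\!\left[\Tr(A_{G_{m,t}}^{6}) > \omega\,C_m\,t^{3/2}\right] \le \frac1\omega \longrightarrow 0,$$
and the choice $\omega = \log t$ shows that, whp, $\Tr(A_{G_{m,t}}^{6}) \le C_m t^{3/2}\log t \le t^{3/2}(\log t)^{3}$ for $t$ large. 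Taking sixth roots then yields $\|G_{m,t}\| \le t^{1/4}\sqrt{\log t}$ whp.

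Given Theorem~\ref{moments thm}, essentially no obstacle remains: the genuine difficulty — showing that closed walks of length $2k$ are dominated by those whose support is a star rooted at an early, high-degree vertex, which is what forces the $\Theta(t^{k/2})$ rate — was already dispatched in the proof of that theorem via the graph-counting machinery (Theorem~\ref{mag} and Lemma~\ref{to-use}); a fully self-contained argument would just reproduce that case analysis for the single exponent $2k=6$. Two remarks on sharpness. First, the bound is tight up to logarithmic factors: since $G_{m,t}$ contains a vertex of degree $\Delta_1(G_{m,t})$, the star at that vertex is a subgraph dominating entrywise, so $\|G_{m,t}\| \ge \sqrt{\Delta_1(G_{m,t})}$, which by Theorem~\ref{delta1} is of the same order as $t^{1/4}\sqrt{\log t}$ — the norm is governed by the top degree. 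Second, an alternative and more structural route, matching the heuristic behind Theorem~\ref{edge1}, is to write $A_{G_{m,t}} = A_{\mathrm{stars}} + A_{\mathrm{rest}}$, bound $\|A_{\mathrm{stars}}\|$ in terms of the largest degrees (which are controlled as in Theorems~\ref{delta1}--\ref{pa norm}), and bound $\|A_{\mathrm{rest}}\|$ by the row-sum inequality \eqref{trick} with weights $c_i = i^{-\alpha}$ for a suitable $\alpha$; this is messier but is the decomposition that reappears in the edge-eigenvalue analysis.
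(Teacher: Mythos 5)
Your argument is correct, and it is worth noting that the paper itself offers no proof of this statement at all: it is quoted verbatim as Theorem 1.2 of \cite{FFF}, whose original argument proceeds by decomposing the graph by vertex age and bounding degrees/row sums (much like Steps 2--3 in the proof of Theorem \ref{edge1} here), not by a trace computation. Your route is genuinely different: you re-derive the bound from the paper's own moment machinery, using $\|G_{m,t}\|^{2k}\le \Tr(A_{G_{m,t}}^{2k})$, the estimate $\BE[\Tr(A_{G_{m,t}}^{6})]=t\,\BE[\bar\mu_{m,t}^{6}]=\Theta(t^{3/2})$ from Theorem \ref{moments thm}, and Markov with $\omega=\log t$, after which the sixth root indeed gives $C_m^{1/6}t^{1/4}(\log t)^{1/6}\le t^{1/4}\sqrt{\log t}$ for large $t$. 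This is non-circular, since Theorem \ref{moments thm} rests only on Theorem \ref{bt} and Theorem \ref{mag}, not on anything from \cite{FFF}; and it is applicable where the paper uses the cited result (Step 2 of Theorem \ref{edge1}), because the induced subgraph $G[1,t]$ of $G_{m,n}$ is distributed as $G_{m,t}$. What your approach buys is a short, self-contained proof inside the paper's framework; what the original decomposition approach buys is finer structural information (which vertices carry the norm), which is what the paper ultimately needs anyway for the eigenvector localization. One small stylistic point: Perron--Frobenius is not needed for your first display, since $\max_i|\lambda_i|^{2k}\le\sum_i\lambda_i^{2k}$ already holds for any symmetric matrix and even exponent.
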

\begin{theorem}[Claim 2.6 in \cite{FFF}]\label{ub deg}Let $\delta >0$ be fixed. Then for any $s:=s(n)$ such that $s\rightarrow \infty $ as $n\rightarrow \infty$ we have that for all $v\in [s,n]$ and $r\in[s,v]$ we have $d(r, G_{m,v}) \leq n^{\delta}\sqrt{v/r} $ with probability at least $1- O(s^{-C})$ for any arbitrary constant $C$. 
\end{theorem}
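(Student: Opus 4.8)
The plan is to reduce the claim to a uniform upper bound on degrees in the single-edge model $G_{1,t}$, and then to prove that bound by building an exact martingale out of the degree process and applying a maximal inequality.

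First I would descend to $G_{1,mn}$ via the collapsing construction of Section~\ref{model}. If vertex $r$ of $G_{m,v}$ is the block $B_r=\{(r-1)m+1,\dots,rm\}$ of $G_{1,mv}$, then $d(r,G_{m,v})\le\sum_{j\in B_r}d(j,G_{1,mv})\le m\max_{j\in B_r}d(j,G_{1,mv})$, and for $r\ge 2$ (which holds eventually, since $s\to\infty$) one has $mv/j\le mv/((r-1)m+1)\le 2v/r$. So it suffices to show that, with probability at least $1-O(s^{-C})$,
$$d(j,G_{1,t})\le n^{\delta/2}\sqrt{t/j}\quad\text{for every }j\in[ms,mn]\text{ and every }t\in[j,mn],$$
since the bounded factor $m\sqrt2$ and the change of exponent from $\delta$ to $\delta/2$ are absorbed into $n^\delta$ once $n$ is large.

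For the core estimate, fix $j$ and set $d_t:=d(j,G_{1,t})$. By the definition of the process, $\BP[d_{t+1}=d_t+1\mid G_{1,t}]=d_t/(2t+1)$ for $t\ge j$; writing $[x]^p:=x(x+1)\cdots(x+p-1)$ and using $[d+1]^p-[d]^p=p[d+1]^{p-1}$ together with $d[d+1]^{p-1}=[d]^p$, one finds
$$\BE\big[[d_{t+1}]^p\,\big|\,G_{1,t}\big]=[d_t]^p\Big(1+\frac{p}{2t+1}\Big).$$
Hence $M_t:=[d_t]^p\big/\prod_{u=j}^{t-1}\big(1+\frac{p}{2u+1}\big)$ is a nonnegative martingale, with $\BE M_t=\BE M_j=\BE\big[[d_j]^p\big]\le[2]^p=(p+1)!$ because a newborn vertex of $G_{1,j}$ has degree at most $2$. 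Since $\prod_{u=j}^{t-1}(1+\frac{p}{2u+1})\le e^{p/2}(t/j)^{p/2}$ (the bound is uniform in $j$, there being no logarithmic loss from the earliest vertices), the event $d_t>n^{\delta/2}\sqrt{t/j}$ forces $M_t>n^{\delta p/2}e^{-p/2}$, so Doob's maximal inequality applied over $t\in[j,mn]$ gives
$$\BP\Big[\exists\,t\in[j,mn]\colon d_t>n^{\delta/2}\sqrt{t/j}\Big]\le(p+1)!\,e^{p/2}\,n^{-\delta p/2}.$$

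Finally I would union-bound over the at most $mn$ admissible values of $j$ and pick $p=p(C,\delta)$ with $\delta p/2\ge C+2$; the total failure probability is then $O\big(mn\cdot n^{-\delta p/2}\big)=O\big(n^{1-\delta p/2}\big)=O(n^{-C})=O(s^{-C})$, the last step using $s\le n$. No separate treatment is needed for $t$ close to $j$, where $d_t$ is only a bounded constant, since there $n^{\delta/2}\sqrt{t/j}\ge n^{\delta/2}\to\infty$; and the hypothesis $s\to\infty$ enters only to make the conclusion non-vacuous. The main difficulty here is purely bookkeeping: one must confirm that $\BE\big[[d_j]^p\big]$ and the comparison constant in $\prod(1+\frac{p}{2u+1})=\Theta_p((t/j)^{p/2})$ are genuinely uniform in $j$, and that every multiplicative constant picked up in the reduction from $G_{m,v}$ to $G_{1,mv}$ is swallowed by the slack $n^\delta$. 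A cruder but equally valid route replaces Doob by Markov's inequality for $[d_t]^p$ together with a union bound over all pairs $(j,t)$, at the cost of a slightly larger $p$.
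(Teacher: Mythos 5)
Your argument is correct, but it is not the route the paper takes: the paper does not prove this statement at all, it imports it as Claim 2.6 of \cite{FFF} (with only a remark on how to retune their parameter $\ell$ so that the failure probability becomes $O(s^{-C})$), and the original proof in \cite{FFF} is a moment-plus-Markov computation, counting the ways $\ell$ edges can all land on a given vertex and then union-bounding over vertices and times. What you do instead is genuinely self-contained and somewhat cleaner: the identity $\BE\bigl[[d_{t+1}]^p\mid G_{1,t}\bigr]=[d_t]^p\bigl(1+\frac{p}{2t+1}\bigr)$ is exact, so the normalized rising-factorial moment is an honest nonnegative martingale, and Doob's maximal inequality gives you the bound simultaneously for all times $t\in[j,mn]$ with a single application; this removes the need for a union bound over $v$, leaving only the union over the $O(n)$ values of $j$, which is what makes the $O(n^{-C})\le O(s^{-C})$ bookkeeping painless. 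The verification steps all check out: $[d+1]^p-[d]^p=p[d+1]^{p-1}$ and $d\,[d+1]^{p-1}=[d]^p$ give the martingale property, $\BE[[d_j]^p]\le(p+1)!$ since a newborn vertex has degree at most $2$, the product $\prod_{u=j}^{t-1}\bigl(1+\frac{p}{2u+1}\bigr)$ is $O_p\bigl((t/j)^{p/2}\bigr)$ uniformly in $j$, and the collapsing step $d(r,G_{m,v})\le\sum_{j\in B_r}d(j,G_{1,mv})$ with $mv/j\le 2v/r$ for $r\ge2$ is fine. Two trivial slips worth fixing: the block of vertex $r\ge s$ starts at $(r-1)m+1$, so the uniform bound should be stated for $j\in[(s-1)m+1,mn]$ rather than $[ms,mn]$ (the union bound is unaffected), and you should say explicitly that the constant absorbed into $n^\delta$ (the factor $m\sqrt{2}$ and the $e^{p/2}$) requires $n$ large, which is harmless since the statement is asymptotic. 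What your approach buys is independence from \cite{FFF}; what the paper's citation buys is brevity and consistency with the other degree estimates (Theorems \ref{delta1}--\ref{ub deg}) that are all quoted from the same source.
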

\begin{remark}
The above theorem is stated slightly differently in \cite{FFF}. To obtain the form stated above, change $\ell$ (as in their proof) to be an integer satisfying $2-\delta \ell \leq -C$, and then keep track of the error probability. 
\end{remark}
\noindent Lastly, we will be using a result from \cite{PRR} which gives us explicit rates of convergence from the degrees to their limiting distribution. To describe this limiting distribution, we first define $$\kappa_s(x) = \Gamma(s)\sqrt{\frac{2}{s\pi}}\exp\left(\frac{-x^2}{2s}\right)U\left(s-1, \frac{1}{2},\frac{x^2}{2s}\right)$$where $\Gamma(s)$ denotes the gamma function, and $U(a,b,z)$ denotes the confluent hypergeometric function of the second kind. For more information about the above distribution and its properties, see \cite{PRR} and references therein. Denote by $d_K(\mu_1, \mu_2)$ to be the Kolmogorov distance between two probability measures defined as: 
\begin{align*}
d_K(\mu_1,\mu_2) = \sup_{x\in \mathbb R} \left| \mu_1(-\infty, x], - \mu_2(-\infty, x]\right|
\end{align*}
\begin{theorem}[Theorem 1.1 in \cite{PRR}] Let $b_{N,i} = \sqrt{\BE[d(i,G_{1,N})^2]}$. For $1\leq i \leq N$ and some constants $c, C>0$ independent of $N$, we have: $$\frac{c}{\sqrt{N}} \leq d_K \left( \frac{d(i,G_{1,N})}{b_{N,i}}, \kappa_{i-1/2}\right)\leq \frac{C}{\sqrt{N}}$$
\end{theorem}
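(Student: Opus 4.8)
The plan is to treat the degree of a fixed vertex $i$ as a generalized P\'olya urn, extract its exact moments, identify the limit law (which is where the confluent hypergeometric function enters), and then upgrade the moment information to a Kolmogorov--distance estimate, with the rate $N^{-1/2}$ dictated by the lattice nature of $d(i,G_{1,N})$.

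\emph{Exact moments and the limit law.} For $t>i$ the conditional law of $d(i,G_{1,t})$ given the history is that of $d(i,G_{1,t-1})$ plus an independent Bernoulli with parameter $d(i,G_{1,t-1})/(2t-1)$. For the rising factorial $x^{\overline k}:=x(x+1)\cdots(x+k-1)$ this produces the exact multiplicative recursion
\[
\BE\big[d(i,G_{1,t})^{\overline k}\mid\mathcal F_{t-1}\big]=d(i,G_{1,t-1})^{\overline k}\Big(1+\tfrac{k}{2t-1}\Big),
\]
hence $\BE[d(i,G_{1,N})^{\overline k}]=\BE[d(i,G_{1,i})^{\overline k}]\prod_{t=i+1}^{N}\big(1+\tfrac{k}{2t-1}\big)$. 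Rewriting the product through Gamma functions and applying Stirling gives, for each fixed $i$ and $k$, $\BE[d(i,G_{1,N})^{\overline k}]=c_{i,k}\,N^{k/2}(1+O_{i,k}(N^{-1}))$ with $c_{i,k}=\Gamma(i-\tfrac12)/\Gamma(i-\tfrac12+\tfrac k2)$ (up to the harmless $O(1)$ contribution of the random initial value $d(i,G_{1,i})\in\{1,2\}$). In particular $b_{N,i}^{2}=\BE[d^{\overline2}]-\BE[d^{\overline1}]=(1+O_i(N^{-1}))\,N/(i-\tfrac12)$, and $d(i,G_{1,N})/\BE[d(i,G_{1,N})]$ is a nonnegative martingale which, being $L^p$-bounded by the moment formula, converges a.s. and in $L^p$ to a limit $\xi_i$ whose moments are read off from the $c_{i,k}$. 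The density of $\xi_i/\sqrt{\BE[\xi_i^2]}$ admits an integral representation whose kernel is $U(\,\cdot\,,\tfrac12,\,\cdot\,)$, the confluent hypergeometric function of the second kind; this is precisely $\kappa_{i-1/2}$, and the computation above yields $\BE[(d(i,G_{1,N})/b_{N,i})^{k}]\to\int x^{k}\,d\kappa_{i-1/2}$ with error $O_{i,k}(N^{-1})$.

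\emph{From moments to $d_K$, and the rate.} The upper bound is really a local central limit theorem for the urn. I would establish
\[
\BP[d(i,G_{1,N})=\ell]=\frac1{b_{N,i}}\,\kappa_{i-1/2}\!\Big(\tfrac{\ell}{b_{N,i}}\Big)\big(1+o(1)\big)
\]
uniformly over $\ell$ in the bulk, either from the closed-form expression for $\BP[d(i,G_{1,N})=\ell]$ (a ratio of Gamma functions, since the chain has an explicit distribution) via a Stirling/Edgeworth expansion, or by Stein's method applied to the discrete birth chain. Summing this estimate shows that $\mathrm{Law}(d(i,G_{1,N})/b_{N,i})$ is within $o(N^{-1/2})$ in $d_K$ of the discretization of $\kappa_{i-1/2}$ onto the grid $b_{N,i}^{-1}\mathbb Z_{\ge0}$, and since $\kappa_{i-1/2}$ has a bounded density that discretization is within $\|\kappa_{i-1/2}\|_\infty/b_{N,i}=O_i(N^{-1/2})$ of $\kappa_{i-1/2}$ itself, giving $d_K\le C/\sqrt N$. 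For the lower bound, note that $d(i,G_{1,N})/b_{N,i}$ lives on the grid $b_{N,i}^{-1}\mathbb Z_{\ge0}$ of spacing $\asymp_i N^{-1/2}$; choosing an interval on which $\kappa_{i-1/2}$ has density bounded below (the limit law has full support on $\mathbb R_{\ge0}$ for each fixed $i$) and a grid gap inside it, the continuous cdf rises by $\asymp_i N^{-1/2}$ across a gap where the discrete cdf is constant, so $d_K\ge c/\sqrt N$.

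\emph{Main obstacle.} The delicate point is not the moment bookkeeping (the recursion is exact) but the local limit theorem with a sufficiently good error term, uniformly in $\ell$: convergence in distribution only yields $d_K=o(1)$, whereas one genuinely needs a pointwise ($L^\infty$) comparison of the law of $d(i,G_{1,N})$ with the profile $b_{N,i}^{-1}\kappa_{i-1/2}(\cdot/b_{N,i})$. Achieving this requires either a clean exact formula for $\BP[d(i,G_{1,N})=\ell]$ with uniform control of the Stirling remainder, or a quantitative Stein's-method argument for the time-inhomogeneous birth chain; propagating the dependence on $i$ through this step is what produces the constants $c,C$, whose independence of $N$ (only) is what the statement asserts.
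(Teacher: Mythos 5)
This statement is not proved in the paper at all: it is imported verbatim as Theorem 1.1 of \cite{PRR} and used as a black box (the only feature of $\kappa_{i-1/2}$ the authors subsequently need is its uniform boundedness). So there is no internal proof to compare your sketch against; the relevant comparison is with Pek\"oz--R\"ollin--Ross themselves, who obtain both bounds by Stein's method for this Kummer-$U$ family of distributions (via a distributional characterization of $\kappa_s$ and an explicit coupling for the urn dynamics), not by the moment method you outline.

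On the substance of your sketch: the preparatory part is sound. The rising-factorial recursion $\BE[d(i,G_{1,t})^{\overline k}\mid\mathcal F_{t-1}]=d(i,G_{1,t-1})^{\overline k}\bigl(1+\tfrac{k}{2t-1}\bigr)$ is exact, it gives $b_{N,i}^2=\Theta(N/i)$ and convergence of all scaled moments, and your lattice argument for the lower bound $c/\sqrt N$ is essentially the right (and standard) one, since the law of $d(i,G_{1,N})/b_{N,i}$ is supported on a grid of spacing $\asymp_i N^{-1/2}$ while the limiting distribution function is continuous and locally increasing. But the upper bound, which is the actual content of the theorem, is not established: you reduce it to a local limit theorem with error $o(N^{-1/2})$ uniformly in $\ell$, and then declare that proving this LLT (via an exact Gamma-ratio formula with uniform Stirling control, or via a quantitative Stein argument for the time-inhomogeneous birth chain) is the ``main obstacle.'' That obstacle is precisely the theorem: moment convergence alone yields only $d_K=o(1)$ with no rate, and nothing in the write-up carries out the uniform pointwise comparison, controls the dependence of the error on $i$ (the statement requires $c,C$ independent of $N$ uniformly over $1\le i\le N$, including $i$ growing with $N$, where ``fixed $i$, fixed $k$'' moment asymptotics do not suffice), or supplies the Stein coupling. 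A secondary inaccuracy: your constant $c_{i,k}=\Gamma(i-\tfrac12)/\Gamma(i-\tfrac12+\tfrac k2)$ treats the random initial degree $d(i,G_{1,i})\in\{1,2\}$ as a harmless additive $O(1)$, but it enters multiplicatively through $\BE[d(i,G_{1,i})^{\overline k}]$, so even the identification of the limit as exactly $\kappa_{i-1/2}$ needs the bookkeeping redone. In short: plausible program, correct lower-bound idea, but the quantitative upper bound that the theorem asserts is left unproved.
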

\noindent In particular, the only property we will need from $\kappa_{i-1/2}(x)$ is that they are uniformly bounded: For all positive integers $i$, we have $\kappa_{i-1/2}(x) \leq 2$ for all $x\in [0,\infty)$. We present our final tool, 
\begin{theorem}\label{deg bounds}Let $\omega(n)\rightarrow \infty$. Then, for all $i\in [1,k]$ we have
$$
d(i, G_{m,n}) \in \left[ \frac{\BE d(i,G_{m,n})}{\omega(n)}, \omega(n) \BE d(i,G_{m,n})\right]
$$with probability at least $1 - O ( k/\omega(n) + k/ \sqrt{n})$. 
\end{theorem}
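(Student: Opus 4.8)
The plan is to control the upper and lower deviations of $d(i,G_{m,n})$ from its mean separately and then union-bound over $i\in[1,k]$. The upper deviation is essentially free: for each fixed $i$, Markov's inequality gives
$$
\BP\bigl[\,d(i,G_{m,n}) > \omega(n)\,\BE d(i,G_{m,n})\,\bigr] < \frac{1}{\omega(n)},
$$
so the probability that the upper bound fails for some $i\le k$ is at most $k/\omega(n)$. The substantive part is the lower deviation, where Markov is useless and one must instead use the quantitative degree limit theorem of \cite{PRR} together with the uniform density bound $\kappa_s(x)\le 2$ recorded above.

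Since the result of \cite{PRR} is phrased for $G_{1,N}$, I would first pass from $G_{m,n}$ to $G_{1,mn}$. Set $N=mn$, let $B_i=\{(i-1)m+1,\dots,im\}$ be the block of vertices of $G_{1,N}$ that collapses to vertex $i$ of $G_{m,n}$, and put $j:=(i-1)m+1$. Directly from the definition of the collapsing operation one has
$$
d(i,G_{m,n}) \ \ge\ d(j,G_{1,N})
\qquad\text{and}\qquad
\BE d(i,G_{m,n}) \ \le\ C_m\,\BE d(j,G_{1,N}),
$$
the latter because Theorem~\ref{bt} applied to single edges gives $\BE d(v,G_{1,N}) = \Theta\!\bigl(\sqrt{n/i}+1\bigr)$ for every $v\in B_i$, uniformly, with the constant depending only on the fixed parameter $m$. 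Since $\BE d(j,G_{1,N})\le b_{N,j}$ by Cauchy--Schwarz, the event $\{d(i,G_{m,n}) < \BE d(i,G_{m,n})/\omega(n)\}$ forces $d(j,G_{1,N})/b_{N,j} < C_m/\omega(n)$. Applying the theorem of \cite{PRR} (Kolmogorov error $O(1/\sqrt N)=O(1/\sqrt n)$) together with $\kappa_{j-1/2}\le 2$ then yields
$$
\BP\!\left[\frac{d(j,G_{1,N})}{b_{N,j}} < \frac{C_m}{\omega(n)}\right]
\ \le\ \int_0^{C_m/\omega(n)} \kappa_{j-1/2}(x)\,dx \;+\; O\!\left(\frac{1}{\sqrt n}\right)
\ \le\ \frac{2C_m}{\omega(n)} + O\!\left(\frac{1}{\sqrt n}\right).
$$
Summing over $i\le k$ contributes $O(k/\omega(n)+k/\sqrt n)$, which combined with the upper-deviation estimate gives the claimed failure probability.

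The step I expect to need the most care is the reduction to $G_{1,mn}$: one must verify that collapsing a block of $m$ vertices changes both the degree and its expectation by at most a multiplicative constant uniformly in $i$, paying attention to within-block edges (which become loops) and to the bounded out-degree contribution, and one should note that the estimate $\BE d(v,G_{1,N})=\Theta(\sqrt{N/v}+1)$ itself requires a short computation from Theorem~\ref{bt}. Everything else --- Markov, Cauchy--Schwarz, and the cited degree limit theorem --- is routine.
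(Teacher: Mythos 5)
Your proposal is correct and follows essentially the same route as the paper: Markov plus a union bound for the upper deviation, and the quantitative degree limit theorem of \cite{PRR} with the uniform bound $\kappa_{j-1/2}\le 2$ for the lower deviation, after reducing from $G_{m,n}$ to the uncollapsed graph $G_{1,mn}$ via the first-moment estimate $\BE d(v,G_{1,mn})=\Theta(\sqrt{n/i})$. The only (harmless) differences are that you compare against a single block vertex rather than all $m$ of them and use Cauchy--Schwarz $\BE d\le b_{N,j}$ in place of the paper's explicit computation of $b_{N,j}=\Theta(\sqrt{N/j})$.
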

\begin{proof}A calculation in the Appendix shows 
\begin{align}
\BE[d(i,G_{1,n})] &= \Theta\left(\sqrt{\frac{n}{i}}\right)\label{exp deg}\\
b_{n,i} &= \Theta\left(\sqrt{\frac{n}{i}}\right).\label{2nd moment}
\end{align}Note that $d(i,G_{m,n}) = \sum_{j = (i-1)m +1}^{im}d(j, G_{1,mn})$, so we obtain $\BE d(i, G_{m,n}) = \Theta(\sqrt{n/i})$. Let $\tilde C$ be an absolute constant such that for any $i\in [n]$ and any $j\in [(i-1)m +1, im]$ we have: 
$$
\frac{\BE d(i, G_{m,n})}{b_{mn,j}} \leq \tilde C.
$$
For any fixed function $\omega(n)$, we have:
\begin{align*}
\BP\left[d(i,G_{m,n})\leq \frac{\BE d(i,G_{m,n})}{\omega(n)}\right] 
&\leq  \sum_{j = (i-1)m +1}^{im}\BP\left[d(j, G_{1,mn})\leq \frac{\BE d(i,G_{m,n})}{\omega(n)}\right]\\
&= \sum_{j = (i-1)m +1}^{im}\BP\left[\frac{d(j, G_{1,mn})}{b_{mn,j}}\leq \frac{\BE d(i,G_{m,n})}{b_{mn,j}\omega(n)}\right]\\
&\leq \sum_{j = (i-1)m +1}^{im}\BP\left[\frac{d(j, G_{1,mn})}{b_{mn,j}}\leq \frac{\tilde C}{\omega(n)}\right]\\
&\leq \sum_{j = (i-1)m +1}^{im} \frac{C}{\sqrt{mn}} + \int_0^{\frac{\tilde C}{\omega(n)} }\kappa_{j- 1/2}(x) dx\\
&= O \left( \frac{1}{\sqrt{n}} + \frac{1}{\omega(n)}\right)
\end{align*}Hence, with probability at least $1- O  \left( \frac{k}{\sqrt{n}} + \frac{k}{\omega(n)}\right)$ we have that for all $i \leq k$: $$d(i,G_{m,n})\geq\frac{\BE d(i,G_{m,n})}{\omega(n)}.$$For the upper bound one can just apply Markov's to obtain:
$$
\BP\left[ d(i,G_{m,n}) \geq \omega(n) \BE d(i,G_{m,n})\right]\leq \frac{1}{\omega(n)}.
$$Taking a union bound gives the desired result. 
\end{proof}
\subsection{Proof of Theorem \ref{edge1}} 
\noindent The proof will be similar in nature to the one done in \cite{CLV}. We are going to split the graph into multiple subgraphs, and we are going to show that the eigenvalues can be deduced from a very well structured subgraph, and the rest can be viewed as ``noise". For ease of notation, we will simply write $G$ for $G_{m,n}$. Throughout the proof, we will have the following notation 
$$
\delta = 1/10000 \qquad k = n^{1/25} \qquad b = n^{1/20} \qquad s = n^{1/7} \qquad t= n^{13/25}.
$$
\noindent Let $S = [1,s]$ and $T = [t,n]$. Consider the following subgraphs of $G$ defined as follows: 
\begin{itemize}
\item $G_1= G[1,t]$. 
\item $G_2= G[s, n]$. 
\item $G(S,T)$ is the bipartite graph spanned by the vertex partition $[1,s]$ and $[t,n]$. 
\item $G_3$ is the subgraph of $G(S,T)$ spanned by the vertices in $T$ which have more than one neighbor in $S$. If we have a vertex $v\in T$ which connects to a $u\in S$ through two distinct edges, we consider $t$ having two neighbors in $S$, so $G_3$ will contain all the parallel edges of $G(S,T)$.  
\item $G_4$ is the complement of $G_3$ in $G(S,T)$.
\end{itemize}
\noindent The advantage of the decomposition above is that we obtain $G_4$ which is very well structured: It is a union of $s$ disjoint stars, where we have one star for every $u\in S$. Hence, for $i\leq k$ we will have: $$\lambda_i(G_4) = \sqrt{\Delta_i(G_4)}.$$
We will show that the eigenvalues for $G$ can be deduced from the eigenvalues of $G_4$. This consists of two parts: 
\begin{enumerate}
\item The subgraphs $G_1,G_2$ and $G_3$ have negligible norm.
\item For a vertex $u\in S$, the degree in $G_4$ is close to the degree in $G$. That is, $d(u, G_4)\approx d(u,G)$. 
\end{enumerate}Thus, we define the ``loss" of a vertex $u\in S$ as: 
$$
L(u) = d(u, G_1) + d(u,G_3).
$$Note that we obtain the following relation:  
$$
d(u, G) = d(u, G_4) + L(u).
$$
\noindent Once we have shown that the norms of $G_1,G_2,G_3$ are negligible, we can apply Weyl's inequality: 
how the required bounds through the following five steps:
\begin{align*}
\text{Step 1:} & \text{ Whp we have that for all $i\leq k$:  $d(i, G) \geq \BE[d(i,G)]/ b$.}\\
\text{Step 2:} & \text{ Whp we have $\|G_1\| \leq t^{1/4} \log t$.}\\
\text{Step 3:} & \text{ Whp we have $\|G_2\| = O( n^\delta (n/s)^{1/4})$. }\\
\text{Step 4:} & \text{ Whp we have that for all $u\in S$: $L(u) \leq s^{1.5} \log^2 n + t^{1/2} \log t$.}\\
\text{Step 5:} & \text{ Whp we have $\|G_3\| \leq s^{1.5} \log^2 n$. }
\end{align*}
\noindent Assuming Steps 1 - 5, the proof of Theorem \ref{edge1} follows readily:
\begin{proof}[Proof of Theorem \ref{edge1}]
Recall that $$\BE[d(i,G)] = \Theta\left(\sqrt{\frac{n}{i}} \right) $$which implies by Step 1 that whp we have: 
$$ 
\Delta_i(G) \geq \frac{C}{b}\sqrt{\frac{n}{i}}\geq \frac{C}{b}\sqrt{\frac{n}{k}}
$$for some appropriate constant $C$. 
Note that by Step 4 and the above lower bound, we have that whp for all $i\leq k$: 
\begin{align}
\Delta_i(G_4) 
&\geq \Delta_i(G) - \max_{u\in S} L(u)\nonumber\\
&\geq \Delta_i(G) - (s^{1.5} \log^2 n + t^{1/2} \log t)\label{lb delta}\\
&= (1-o(1))\Delta_i(G).\nonumber
\end{align}
As $\Delta_i(G) \geq \Delta_i(G_4)$, we have $\Delta_i(G_4) = (1-o(1))\Delta_i(G)$. Since $G_4$ is a union of disjoint stars, we have that $\lambda_i(G_4) = \sqrt{\Delta_i(G_4)}$, which means:
$$ \lambda_i(G_4) = (1- o(1)) \sqrt{ \Delta_i(G)}.$$By using our lower bound on $\Delta_i(G)$ and the fact that $i\leq k$, we have: 
$$\lambda_i(G_4)= (1-o(1))\sqrt{\Delta_i(G)}\geq (1-o(1)) \sqrt{\Delta_k(G)} \geq \frac{C}{2}\left(\frac{n^{1/4}}{b^{1/2}k^{1/4}}\right)=n^{43/200}.$$
By Weyl's inequality we have: $$ |\lambda_i(G_4) - \lambda_i(G) | \leq \|G_1\|+\|G_2\|+\|G_3\|$$which using Steps 2, 3 and 5, we have that whp: 
\begin{align}
\|G_1\|+\|G_2\|+\|G_3\| 
&\leq  t^{1/4} \log t +O(n^\delta (n/s)^{1/4})+s^{1.5}\log^2 n\nonumber\\
&= O(n^{\delta} n^{6/28})\nonumber \\
&= o\left( \left(\frac{1}{b}\sqrt{\frac{n}{k}}\right)^{1/2}\right)\label{Cn}\\
&= o(\lambda_k(G_4)).\nonumber
\end{align}Thus,
$$
|\lambda_i(G_4) - \lambda_i(G) | = o(\lambda_i(G_4))
$$
Combining all of the above, we obtain that whp for all $i\leq k$: 
\begin{align*} 
\lambda_i(G) 
&= (1\pm o(1)) \lambda_i(G_4) \\ 
&= (1\pm o(1)) \sqrt{\Delta_i(G_4)}\\
&= (1\pm o(1)) \sqrt{\Delta_i(G)} 
\end{align*}just as desired. 
\end{proof}
\noindent We now prove each one of the required Steps. 
\\
\\
\textbf{Step 1: } Follows from Theorem \ref{deg bounds} by letting $\omega(n) = b$. 
\\
\\
\textbf{Step 2: } Follows from Theorem \ref{pa norm}.
\\
\\
\textbf{Step 3: }We will be using equation \eqref{trick}. Our matrix $A$ will be the adjacency matrix for $G_2$, where for ease of notation the rows and columns are labeled $[s,n]$. Let $r = \sqrt{ns}$, $c_i = \frac{n^{1/4}}{i^{1/4}}$ for $i \in [s,r]$, and $c_i = 1$ for $i \in (r,n]$. Define, 
\[
R_i = \frac{1}{c_i}\sum_{j=s}^n c_j a_{ij}.
\]It suffices to show that for all $i \in [s,n]$ we have $R_i =O(n^\delta (n/s)^{1/4})$ with probability $1-o(n^{-1})$, so the result would follow from taking a union bound. Let $C$ be a large enough constant such that $s^{-C} = o(n^{-1})$. Then, using Theorem \ref{ub deg} we have:
\\
\\
\textbf{Case $i \in [s,r]$ : }We have 
\begin{align*}
R_i
& = \frac{1}{c_i}\left(\sum_{j=s}^i c_j a_{ij} + \sum_{j = i+1}^r c_j a_{ij} +\sum_{j = r+1}^n c_j a_{ij}\right)\\
&\leq \frac{mc_s}{c_i}+d(i, G_{m,r})+ \frac{d(i, G_{m,n})}{c_i}\\
&\leq \frac{mc_s}{c_i}+\sqrt{\frac{r}{i}} r^\delta+\frac{1}{c_i} \sqrt{\frac{n}{i}} n^\delta \\
&= O\left(\left(\frac{n}{s}\right)^{1/4}n^\delta\right)
\end{align*}occurs with probability $1-O(s^{-C})= 1-o(n^{-1})$. 
\\
\\
\textbf{Case $i\in (r,n]$ : }In this case one has: 
\begin{align*}
R_i 
&= \sum_{j=s}^i c_ja_{ij} + \sum_{j= i+1}^n a_{ij}\\
&\leq m c_s+ d(i, G_{m,n}) \\ 
& = m \sqrt[4]{\frac{n}{s}} + \sqrt{\frac{n}{r}} n^\delta\\ 
& = O \left( \left( \frac{n}{s}\right)^{1/4} n^\delta \right)
\end{align*}occurs with probability $1-O(r^{-C})= 1-o(n^{-1})$. Hence, whp we have $R_i= O(n^\delta (n/s)^{1/4})$ for all $i\in [s,n]$. Thus, whp we have: $$\|G_2\| = O \left( \left( \frac{n}{s}\right)^{1/4} n^\delta \right).$$
\\
\\
\textbf{Step 4: }For $u\in S$, we have: 
\begin{equation}\label{ub1}
d(u, G_1)\leq \Delta_1(G_1) \leq \sqrt{t} \log t
\end{equation}
whp by Theorem \ref{delta1}. Let $H_1$ be the following ordered graph $\{(v_1,v_3), (v_2,v_3)\}$, and let $H_2 = \{(v_1,v_2),(v_1,v_2)\}$. Then, 
$$
d(u, G_3) \leq \sum_{x_2 \in S, x_3\in T} X_{m,n}(H_1, \{u, x_2, x_3\}) + 2 \sum_{x_2\in T} X_{m,n}(H_2,\{u, x_2\}).
$$By computing the above graph counts (see Appendix) we obtain: 
\begin{align}\label{d u g3}
\BE[d(u,G_3)] = O(\sqrt{s} \log n)
\end{align}Hence, for an appropriate constant $C$ (independent of $n$), we have: 
\begin{align*}
\BP[d(u, G_3) \geq s^{1.5} \log^2 n ] 
&\leq \BP[d(u, G_3)\geq C s \log n \BE[d(u,G_3)]] \\
&\leq \frac{1}{Cs \log n}.
\end{align*}By taking a union bound over all $u\in S$, we obtain that with high probability: 
$$
d(u, G_3) \leq s^{1.5} \log^2 n.
$$Combining with equation \eqref{ub1} we obtain: 
$$
L(u) \leq  s^{1.5} \log^2 n + \sqrt{t} \log t.
$$
\textbf{Step 5: }We will use the fact that the norm of a graph is always bounded by the maximum degree. Recall, $G_3$ is a bipartite graph where for every $v\in T$ we have that $d(v, G_3) \leq m$ (by definition of our model), and for $u\in S$ we have $d(u, G_3) \leq s^{1.5} \log^2 n$ (by above). Hence, whp: 
$$
\|G_3\| \leq s^{1.5} \log^2 n .
$$This concludes the proof of Theorem \ref{edge1}. Now we proceed to study the largest eigenvectors. 

\section{Localization of Eigenvectors} 
\noindent In this section, we prove Theorem \ref{evec}. The strategy is as follows: We will use the decomposition and all the notation we used in the previous section (e.g., $k, t, G_1, G_2,\ldots$). Let $G_{m,n}$ be a random PA graph, and let $A_n$ denote its adjacency matrix. Let $B_n$ denote the adjacency matrix of $G_4$, and let $C_n := A_n - B_n$. From the Davis-Kahan Theorem (Theorem \ref{dk}), we see that the two quantities one needs to control when studying eigenvectors of perturbations are the eigenvalue gap and the size of the noise. We present the following bounds on the two quantities:
\begin{lemma}\label{Cn lemma}Let $C_n$ be defined as above. Then whp:
$$\|C_n\| = O(n^{43/200}).$$
\end{lemma}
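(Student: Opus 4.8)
The plan is to bound $\|C_n\|$ by decomposing $C_n = A_n - B_n$ into the pieces $G_1, G_2, G_3$ already introduced in the edge-eigenvalue section, and then invoke the norm bounds proved in Steps 2, 3, and 5. Concretely, $C_n$ is the adjacency matrix of the graph obtained from $G_{m,n}$ by removing exactly the disjoint-star part $G_4$. Every edge of $G_{m,n}$ falls into one of the following categories: it lies inside $G[1,t] = G_1$; it lies inside $G[s,n] = G_2$; it is a bipartite edge between $S = [1,s]$ and $T = [t,n]$ with the $T$-endpoint having more than one $S$-neighbor, i.e.\ it lies in $G_3$; or it is a bipartite edge between $S$ and $T$ with the $T$-endpoint having a unique $S$-neighbor, i.e.\ it lies in $G_4$. (One should double-check the overlap between $S$ and $[1,t]$ and between $[t,n]$ and $[s,n]$, but since $s = n^{1/7} < t = n^{13/25} < n$, the only subtlety is that edges with both endpoints in $[s,t]$ sit inside both $G_1$ and $G_2$; this causes no problem for an \emph{upper} bound on the norm.) Hence $C_n$ is a submatrix-sum dominated (entrywise, after taking absolute values, which is fine since all entries are nonnegative) by $A_{G_1} + A_{G_2} + A_{G_3}$, and the triangle inequality for the operator norm gives
$$
\|C_n\| \leq \|G_1\| + \|G_2\| + \|G_3\|.
$$

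The key steps, in order, are: first, make the edge-partition argument above rigorous, namely verify that every edge of $G$ not in $G_4$ is counted in $A_{G_1} + A_{G_2} + A_{G_3}$, so that $0 \leq (C_n)_{ij} \leq (A_{G_1})_{ij} + (A_{G_2})_{ij} + (A_{G_3})_{ij}$ entrywise; then apply the elementary fact (used already in this section via \eqref{trick} and Perron--Frobenius) that for nonnegative symmetric matrices $M \leq M'$ entrywise implies $\|M\| \leq \|M'\|$, combined with subadditivity of the norm; and finally substitute the three bounds from the previous section. By Step 2, $\|G_1\| \leq t^{1/4}\log t = n^{13/100}\log t$; by Step 3, $\|G_2\| = O(n^\delta (n/s)^{1/4}) = O(n^\delta n^{3/14})$; and by Step 5, $\|G_3\| \leq s^{1.5}\log^2 n = n^{3/14}\log^2 n$. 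Since $\delta = 1/10000$ and $3/14 < 43/200$ with room to spare (indeed $3/14 = 0.2142\ldots$ and $43/200 = 0.215$), each of the three terms is $O(n^{43/200})$, and the logarithmic and $n^\delta$ factors are absorbed because the exponent gap $43/200 - 3/14 = 1/1400 > \delta$. This is exactly the computation already carried out in \eqref{Cn} of the edge-eigenvalue proof, so the bound follows.

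I do not expect a serious obstacle here, since all the hard estimates ($\|G_1\|$, $\|G_2\|$, $\|G_3\|$) were established in Steps 1--5 of the previous section. The only point requiring a little care is the entrywise domination $C_n \leq A_{G_1} + A_{G_2} + A_{G_3}$: one must confirm that the definitions of $G_1,\dots,G_4$ genuinely exhaust all edges of $G$, paying attention to parallel edges (the definition of $G_3$ explicitly keeps all parallel copies of multi-edges from $G(S,T)$, so a doubled $S$--$T$ edge is fully in $G_3$, never split between $G_3$ and $G_4$) and to loops (loops are internal edges, hence absorbed into $G_1$ if the vertex is in $[1,t]$ and into $G_2$ if in $[s,n]$, which covers all vertices). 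Once this bookkeeping is checked, the lemma is immediate from subadditivity of the operator norm and the arithmetic on exponents.
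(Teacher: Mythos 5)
Your proposal is correct and follows essentially the same route as the paper: the paper's proof likewise observes that $C_n$ is the adjacency matrix of the complement of $G_4$ in $G_{m,n}$, that this complement is contained in $G_1\cup G_2\cup G_3$, and then applies the bounds of Steps 2, 3 and 5 exactly as in equation \eqref{Cn}. The only difference is that you spell out the entrywise-domination and nonnegativity argument justifying $\|C_n\|\leq \|G_1\|+\|G_2\|+\|G_3\|$, which the paper leaves implicit.
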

\begin{proof}
$C_n$ corresponds to the adjacency matrix of the complement of $G_4$ in $G_{m,n}$. As we saw in the proof of Theorem \ref{edge1}, we see that this complement is contained in $G_1\cup G_2 \cup G_3$. Hence, by equation \eqref{Cn} we obtain whp  
\begin{align*}
\|C_n\| &\leq \|G_1\| + \|C_3\| + \|G_3\| \\ 
&= O(n^{43/200}).
\end{align*}
\end{proof}
\noindent Recall that the eigenvalues of $B_n$ arise from the (square root of) max degrees. Hence, to control the eigenvalue gap, we need to control the maximum degree gap. We use the following theorem: 
\begin{theorem}[Theorem 1 in \cite{FFF}]Let $K$ be some fixed constant. We have whp: 
$$
\frac{\sqrt{n}}{\log n} \leq \Delta_i(G) \leq \Delta_{i-1}(G) - \frac{\sqrt{n}}{\log n}
$$
$$
\frac{\sqrt{n}}{\log n} \leq \Delta_i(G).
$$
\end{theorem}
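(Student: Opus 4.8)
\emph{Proof proposal.} Write $G=G_{m,n}$, let $K$ be the fixed constant in the statement, and let $\Delta_1(G)\ge\Delta_2(G)\ge\cdots$ be the degrees in decreasing order. The plan is to prove the two assertions separately: the lower bound $\Delta_i(G)\ge\sqrt n/\log n$ for every $i\le K$ (which also covers the redundant second displayed line and the case $i=1$), and the gap bound $\Delta_{i-1}(G)-\Delta_i(G)\ge\sqrt n/\log n$ for $2\le i\le K$. For the lower bound, note $\Delta_K(G)\ge\min_{1\le j\le K}d(j,G_{m,n})$, so by a union bound over the finitely many $j\le K$ it suffices to show $d(j,G_{m,n})\ge\sqrt n/\log n$ whp for each fixed $j$. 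Since $d(j,G_{m,n})=\sum_{r=(j-1)m+1}^{jm}d(r,G_{1,mn})\ge d((j-1)m+1,G_{1,mn})$ and $b_{mn,(j-1)m+1}=\Theta(\sqrt{n/j})$ by \eqref{2nd moment}, the \cite{PRR} estimate together with $\kappa_s(\cdot)\le 2$ yields
\[
\BP\!\left[d(j,G_{m,n})\le \tfrac{\sqrt n}{\log n}\right]\le \BP\!\left[\frac{d((j-1)m+1,G_{1,mn})}{b_{mn,(j-1)m+1}}\le \frac{C\sqrt j}{\log n}\right]\le \frac{2C\sqrt K}{\log n}+O\!\left(\tfrac{1}{\sqrt n}\right)=o(1),
\]
with $C$ the constant implicit in $b_{mn,\cdot}\le C\sqrt{n/j}$; alternatively one may quote Theorem \ref{deg bounds} with $\omega(n)=\log n$.

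The gap bound requires more, the conceptual point being that the top degrees do \emph{not} concentrate: $d(j,G_{m,n})$ has fluctuations of the same order $\Theta(\sqrt n)$ as its mean, so a macroscopic separation cannot be deduced from first or second moments — it must come from the absolutely continuous limiting law of the early degrees. The argument splits into two steps. First I would localise the top $K$ degrees to a constant-size prefix: fix $\eta>0$; I claim there is a constant $J=J(\eta,K,m)$ so that, with probability at least $1-\eta$ for all large $n$, every vertex $j>J$ has $d(j,G_{m,n})<\Delta_K(G)$, whence the $K$ largest degrees of $G$ are all realised on $\{1,\dots,J\}$. To prove this, first pick a small constant $c=c(\eta,K)$ with $\BP[\Delta_K(G)\ge c\sqrt n]\ge 1-\eta/2$: this holds because $\Delta_K(G)\ge\min_{j\le K}d(j,G_{m,n})$ and, by Theorem \ref{deg bounds} applied with a suitable \emph{constant} $\omega$, $\BP[d(j,G_{m,n})<c\sqrt n]=O_{K,j}(c)$ for each $j\le K$. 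Then fix a constant $p$ (say $p=10$) and use the standard moment bound $\BE[d(j,G_{m,n})^p]=O_{p,m}\big((n/j)^{p/2}\big)$ (of which \eqref{exp deg}--\eqref{2nd moment} are the cases $p=1,2$), so that Markov gives $\BP[d(j,G_{m,n})\ge c\sqrt n]=O_p(c^{-p}j^{-p/2})$ and hence $\sum_{j>J}\BP[d(j,G_{m,n})\ge c\sqrt n]=O_p(c^{-p}J^{1-p/2})<\eta/2$ once $J$ is a large enough constant. On the intersection of these two events, $d(j,G_{m,n})<c\sqrt n\le\Delta_K(G)$ for all $j>J$, proving the claim.

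Second, on that good event the vector $(\Delta_1(G),\dots,\Delta_K(G))$ is exactly the vector of the $K$ largest entries of $(d(1,G_{m,n}),\dots,d(J,G_{m,n}))$, so any near-tie among the top $K$ degrees forces $|d(a,G_{m,n})-d(b,G_{m,n})|\le\sqrt n/\log n$ for some pair $a<b\le J$. Here I would invoke the fact — standard for the PA model, e.g.\ via Móri's multivariate martingale or the joint limit laws underlying \cite{PRR} — that for each fixed pair $a<b$ the rescaled difference $\big(d(a,G_{m,n})-d(b,G_{m,n})\big)/\sqrt n$ converges in distribution to $\zeta_a-\zeta_b$, where $(\zeta_a,\zeta_b)$ has an absolutely continuous joint law, so $\zeta_a-\zeta_b$ has no atom at $0$ and $\BP[|\zeta_a-\zeta_b|\le\delta]\to0$ as $\delta\to0$. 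Combining with the localisation step, for any fixed $\delta_0>0$,
\[
\limsup_{n\to\infty}\BP\!\left[\min_{2\le i\le K}\big(\Delta_{i-1}(G)-\Delta_i(G)\big)<\delta_0\sqrt n\right]\le \eta+\sum_{a<b\le J}\BP\big[\,|\zeta_a-\zeta_b|\le\delta_0\,\big],
\]
and letting $\delta_0\to0$ and then $\eta\to0$ shows $\min_{2\le i\le K}\big(\Delta_{i-1}(G)-\Delta_i(G)\big)\ge\delta_n\sqrt n$ whp for any $\delta_n\to0$; in particular $\delta_n=1/\log n$ gives the stated bound.

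The hard part is precisely this combination of the absolutely continuous limiting law of the early degrees (and its joint version for pairs $a<b$) with the requirement that the cutoff $J$ — beyond which no vertex competes for the top $K$ — be chosen independently of $n$; this uniformity is exactly what the $p$-th moment tail, summed over $j>J$, secures, and it cannot be obtained from Theorem \ref{ub deg} alone since that bound carries an $n^\delta$ factor. Once these two ingredients are granted, the remainder is a union bound over $i\le K$ and a routine passage to the weak limit.
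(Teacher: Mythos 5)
This statement is imported verbatim from \cite{FFF} (their Theorem~1) and the paper gives no proof of it, so there is no internal argument to compare against; what follows is an assessment of your reconstruction on its own terms. Your skeleton is the right one and in fact mirrors the strategy of the cited source: the lower bound $\Delta_i(G)\ge\sqrt n/\log n$ does follow from Theorem~\ref{deg bounds} with $\omega(n)=\log n$ exactly as you say, and your treatment of the gap --- localise the top $K$ degrees to a constant prefix $\{1,\dots,J\}$ via a $p$-th moment tail summed over $j>J$, then rule out near-ties among the finitely many candidates using atomlessness of the limiting law of $\big(d(a,G)-d(b,G)\big)/\sqrt n$, finishing with the $\delta_0\to0$, $\eta\to0$ diagonalisation that upgrades ``no tie at scale $\delta_0\sqrt n$'' to ``gap at least $\sqrt n/\log n$'' --- is logically sound. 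You are also right that Theorem~\ref{ub deg} cannot substitute for the moment tail because of its $n^\delta$ slack, and the portmanteau step ($\limsup$ against the closed set $\{|x|\le\delta_0\}$) is handled correctly.

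The caveat is that the two load-bearing inputs are invoked rather than proved: the bound $\BE[d(j,G_{m,n})^p]=O_p\big((n/j)^{p/2}\big)$ for some $p>2$ (the paper only establishes $p=1,2$ in the Appendix, though the general case follows from the same computation via Theorem~\ref{bt}), and, more seriously, the joint convergence of $\big(d(a,G)/\sqrt n,\,d(b,G)/\sqrt n\big)$ to an absolutely continuous limit for fixed $a<b$. The second fact is where essentially all of the content of the theorem lives --- without it one cannot exclude $\Delta_{i-1}-\Delta_i=o(\sqrt n)$, since as you note the individual degrees do not concentrate --- and it is not derivable from the marginal Kolmogorov-distance estimate of \cite{PRR} quoted in this paper; one genuinely needs the multivariate limit law (M\'ori's martingale construction or the joint version of the \cite{PRR} results). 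So your write-up is a correct reduction of the theorem to that external fact, but it is not self-contained at the one point where the difficulty is concentrated; if one is willing to cite that fact, the honest accounting is that one is re-deriving \cite{FFF} from its own main ingredient, which is why the paper simply quotes the result.
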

\noindent We can now show a lower bound on the eigenvalue gap: 
\begin{lemma}\label{gap}Using the notation from above, for $i\leq K$ we have whp: 
$$
\lambda_i(B_n) - \lambda_{i+1}(B_n) \geq \frac{n^{1/4}}{\log^3 n}.
$$
\end{lemma}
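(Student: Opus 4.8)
The plan is to use the fact that the spectrum of $B_n$ is completely transparent: since $G_4$ is a disjoint union of stars, one centered at each vertex of $S$, the positive eigenvalues of $B_n$ are exactly the numbers $\sqrt{d(u,G_4)}$ with $u\in S$. Hence, ordering degrees non-increasingly and writing $\Delta_j(G_4)$ for the $j$-th largest degree of $G_4$, we have $\lambda_j(B_n)=\sqrt{\Delta_j(G_4)}$ for every $j$ up to the number of stars of positive size (which is far larger than $K+1$; note $\Delta_{K+1}(G_4)>0$ whp by what follows). Therefore
$$\lambda_i(B_n)-\lambda_{i+1}(B_n)=\sqrt{\Delta_i(G_4)}-\sqrt{\Delta_{i+1}(G_4)}=\frac{\Delta_i(G_4)-\Delta_{i+1}(G_4)}{\sqrt{\Delta_i(G_4)}+\sqrt{\Delta_{i+1}(G_4)}},$$
so it suffices to lower bound the degree gap of $G_4$ and upper bound its largest degree.

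First I would transfer the degree gap from $G$ to $G_4$. By Theorem 1 of \cite{FFF} (stated just above), whp $\Delta_j(G)\geq \sqrt n/\log n$ for all $j\leq K+1$ and $\Delta_i(G)-\Delta_{i+1}(G)\geq \sqrt n/\log n$. Next I would check that the $K+1$ largest-degree vertices of $G$ all lie in $S=[1,s]$: by Theorem \ref{ub deg}, whp every $v\in[s,n]$ satisfies $d(v,G)\leq n^{\delta}\sqrt{n/s}=n^{\delta+3/7}=o(\sqrt n/\log n)$ since $\delta=1/10000<1/14$, so no such vertex can be among the top $K+1$. For each of those vertices $u\in S$ one has $d(u,G_4)=d(u,G)-L(u)$ with $L(u)\leq s^{1.5}\log^2 n+\sqrt t\log t$ whp by Step 4 of the previous section; since $s^{1.5}=n^{3/14}$ and $\sqrt t=n^{13/50}$ both carry an exponent $<1/2$, we get $\max_{u\in S}L(u)=o(\sqrt n/\log n)$. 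Because also $d(u,G_4)\leq d(u,G)$ for every $u$, the sorted degree sequence of $G_4$ is dominated termwise by that of $G$, while its top $K+1$ entries are each within $\max_u L(u)$ of the corresponding entry of $G$; hence, whp,
$$\Delta_i(G_4)-\Delta_{i+1}(G_4)\ \geq\ \bigl(\Delta_i(G)-\Delta_{i+1}(G)\bigr)-\max_{u\in S}L(u)\ \geq\ \frac{\sqrt n}{2\log n}.$$

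To finish, I would bound the denominator using $\Delta_j(G_4)\leq \Delta_j(G)\leq \Delta_1(G)\leq \sqrt n\log n$ whp (Theorem \ref{delta1}), giving $\sqrt{\Delta_i(G_4)}+\sqrt{\Delta_{i+1}(G_4)}\leq 2n^{1/4}\sqrt{\log n}$. Combining,
$$\lambda_i(B_n)-\lambda_{i+1}(B_n)\ \geq\ \frac{\sqrt n/(2\log n)}{2n^{1/4}\sqrt{\log n}}\ =\ \frac{n^{1/4}}{4\log^{3/2}n}\ \geq\ \frac{n^{1/4}}{\log^3 n}$$
for $n$ large, which is the claim. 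The only genuinely delicate points are the bookkeeping that forces the top $K+1$ degrees of $G$ to be attained inside $S$ (so they actually appear in the spectrum of $B_n$) and the verification that the star-loss $L(u)$ of Step 4 is of strictly smaller order than the $G$-degree gap $\sqrt n/\log n$; both reduce to comparing the fixed exponents $1/7$, $13/25$ and $1/10000$ and are where one must be careful with constants, but neither presents a conceptual difficulty.
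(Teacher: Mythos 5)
Your proposal is correct and follows essentially the same route as the paper's own argument: both reduce the eigenvalue gap of $B_n$ to the degree gap of $G$ from \cite{FFF}, transfer it to $G_4$ via the loss bound $L(u)$ from Step 4, and convert degrees to eigenvalues through $\lambda_i(B_n)=\sqrt{\Delta_i(G_4)}$, with only a cosmetic difference in the algebra (you rationalize $\sqrt{\Delta_i}-\sqrt{\Delta_{i+1}}$, the paper bounds the ratio $\Delta_{i+1}(G_4)/\Delta_i(G_4)$). Your explicit check that the top $K+1$ degree vertices of $G$ lie in $S$ is a nice touch that the paper leaves implicit.
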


\noindent For a proof of the above lemma, see Appendix. With these two bounds, we can present the proof. 

\begin{proof}[Proof of Theorem \ref{evec}]Since $B_n$ is the union of disjoint stars we have that for $i\leq K$ whp: 
$$
\|w_i\|_\infty = \frac{1}{\sqrt{2}}
$$and all coordinates which do not realize the infinity norm are equal to $1/\sqrt{2\Delta_i(G_4)} = o(1)$. Hence, if we show: 
$$
\|v_i - w_i\|_2 = o(1)
$$then the proof  would follow. First of all note that for two unit vectors $u$ and $v$, whose angle $\theta$ is in $[0,\pi/2]$ we have: 
\begin{align*}
\|u-v\|_2^2 
&\leq 2\sin^2(\theta).
\end{align*}Hence, it suffices to show that the (sine of the) angle between $w_i$ and $v_i$ is $o(1)$. By Lemmas \ref{Cn lemma} and \ref{gap}: 
\begin{align*}
\|C_n\| &= O(n^{43/200}) \\
\lambda_i(B_n) - \lambda_{i+1}(B_n) &\geq \frac{n^{1/4}}{\log^3 n}
\end{align*}occur whp for all $i\leq K$. Hence, 
\begin{align*}
|\lambda_{i-1} (A_n)- \lambda_i(B_n)| &=|\lambda_{i-1} (A_n) - \lambda_{i-1}(B_n) + \lambda_{i-1}(B_n) -\lambda_i(B_n)| \\
&\geq \frac{n^{1/4}}{2\log^3 n}.
\end{align*}
Similarly, we can bound $|\lambda_{i}(B_n) - \lambda_{i+1}(A_n)|$ from below by the same quantity. Thus, by Theorem \ref{dk}:
$$
\sin\angle(v_i,w_i) = O\left(\frac{\log^3 n }{n^{7/200}}\right)
$$which implies the desired result. 

\end{proof}

\section{Appendix}\label{appendix}
\noindent \textbf{Proof of lemma \ref{moments lemma}: }Note that we have that the sequence $\{\mu_n\}$ is tight. Hence, by Prokhorov's theorem there exists a subsequence $\{\mu_{n_j}\}_{j=1}^\infty$ such that it converges weakly to a measure, call it $\tilde \mu$. Note that the moments of $\tilde \mu$ are equal to the moments of $\mu$, but since they uniquely determine the measure, we have $\tilde \mu = \mu$. By Prokhorov's theorem, since every weakly convergent subsequence has the same limit $\mu$, then the sequence $\{\mu_n\}$ converges weakly to $\mu$. 
\\
\\
\textbf{Proof of lemma \ref{conv in moments}: }Fix $f$ to be a bounded continuous function. Let $X_n = \int f d\mu_n$ and $X = \int f d\mu$. We want to show that $X_n$ converges in probability to $X$. This is equivalent to showing that for any subsequence $\{n_m\}$ one can find a further subsequence $\{n_{m_j}\}$ such that $X_{n_{m_j}} $ converges almost surely to $X$. 
\\
\\
Given $\{n_m\}$, we extract a subsequence $\{n_{m_j}\}$ such that $\int x^k d\mu_{n_{m_j}}$ converges almost surely to $\int x^k d\mu$ for all $k$: This can be done by first choosing a subsequence in which the first moments converges a.s., then pick a subsequence where the second moment converges a.s., and continue inductively. Since $\mu$ is uniquely determined by its moments, we have that $\mu_{n_{m_j}}$ converges weakly to $\mu$ with probability equal to one. This implies that $X_{n_{m_j}}$ converges almost surely to $X$, just as desired. 
\\
\\
\noindent \textbf{Moreover part of Theorem \ref{mag}:} We wish to lower bound: 
$$  \sum_{1\leq x_1 < \ldots < x_t\leq n} \Theta \left( \prod_{i=1}^t \frac{1}{x_i^{d(v_i,H)/2}}\right) $$
where we have $d(v_i, H)\geq d(v_{i+1},H)$. Let $I_1$ and $I_2$ be such that: $d(v_i, H) \geq 3$ for $i\in [1,I_1]$, $d(v_i, H) = 2$ for $i\in [I_1 + 1, I_2]$ and $d(v_i,H) = 1$ for $i\in [I_2+1, t]$. Then, 
\begin{align*}
\sum_{1\leq x_1 < \ldots < x_t\leq n} \Theta \left( \prod_{i=1}^t \frac{1}{x_i^{d(v_i,H)/2}}\right)  &\geq 
\sum_{I_1 + 1 \leq x_{I_1+1}< \ldots < x_t\leq n} \Theta\left( \prod_{i=I_1+1}^t \frac{1}{x_i^{d(v_i,H)/2}}\right) \\
&\geq \Theta\left(\left( \sum_{I_1 + 1 \leq x_{I_1 + 1}< \ldots < x_{I_2} \leq \sqrt{n}}  \prod_{i = I_1+1}^{I_2} \frac{1}{x_i}\right)\left( \sum_{\sqrt{n} \leq x_{I_2+1}< \ldots<x_t\leq n}   \prod_{i = I_2+1}^{t} \frac{1}{\sqrt{x_i}}\right)\right)\\
&= \Theta(n^{f(H)/2}\log^{g(H)} n).
\end{align*}
\noindent \textbf{Proof of equations \eqref{exp deg} and \eqref{2nd moment}:} We have 
\begin{align*}
\BE[ d(i, G_{1,n})] &=  1 + \sum_{j = i+1}^n \BP[(i,j) \in G_{1,n}]\\
&= 1 + \Theta\left( \sum_{j = i+1}^n \frac{1}{\sqrt{ij}}\right) \\ 
&= \Theta \left(\sqrt{\frac{n}{i}}\right) .
\end{align*}Moreover, 
\begin{align*}
b_{n,i}^2 &= \BE[d(i,G_{1,n})^2] \\ 
&= \BE\left[ \left( 1+ \sum_{j =i+1}^n \mathbb I_{(i,j) \in G_{1,n}}\right)^2 \right] \\
&= 1 + \BE\left[3d(i, G_{1,n})+ 2\sum_{i+1 \leq j < j'\leq n}  \mathbb I_{(i,j) \in G_{1,n}} \mathbb I_{(i,j') \in G_{1,n}}\right] \\
&= 1 + \Theta \left(\sqrt{\frac{n}{i}}\right)  + 2\sum_{i+1 < j < j' \leq n }\BP[(i,j),(i,j') \in G_{1,n}] \\ 
&= 1 + \Theta \left(\sqrt{\frac{n}{i}}\right)  + 2\sum_{i+1 < j < j' \leq n }\Theta\left( \frac{1}{i\sqrt{jj'}}\right)\\ 
&= \Theta\left(\frac{n}{i}\right).
\end{align*}Taking a square root yields equation \eqref{2nd moment}. 
\\
\\
\noindent \textbf{Proof of equation \eqref{d u g3}: }Let $V = \{x_1,x_2,x_3\}$. Then from equation \eqref{prob 2} we obtain the following upper bound: 
$$ 
\BE[X_{m,n}(H_1, V)] = O\left( \frac{1}{\sqrt{x_1x_2} x_3}\right) 
$$Hence,
\begin{align*} 
\sum_{x_2 \in S, x_3\in T} X_{m,n}(H_1, \{u, x_2, x_3\}) &= O\left(\sum_{x_2 \in S, x_3\in T}  \frac{1}{\sqrt{ux_2} x_3}\right) \\
&= O(\sqrt{s} \log n)
\end{align*}
Similarly for $H_2$, if $V= \{x_1,x_2\}$ then: 
$$
\BE[X_{m,n}(H_2, V)] = O \left(\frac{1}{x_1x_2}\right).
$$Hence, 
\begin{align*} 
\sum_{x_2\in T} X_{m,n}(H_2, \{u, x_2\}) &= O\left(\sum_{x_2 \in T}  \frac{1}{u x_2}\right) \\
&= O(\log n)
\end{align*}which imply equation \eqref{d u g3}. 
\\
\\
\noindent \textbf{Proof of Lemma \ref{gap}: }First of all note that $\lambda_i(B_n) = \sqrt{\Delta_i(G_4)}$ for $i\leq k$ whp. Moreover, we also have: 
$$
\left( 1 - O\left( n^{13/50}\log n\right)\right)\Delta_i(G)\leq \Delta_i(G_4)\leq \Delta_i(G)
$$
Thus, 
\begin{align*}
\frac{\Delta_{i+1}(G_4)}{\Delta_i(G_4)} &\leq \frac{\Delta_{i+1}(G)}{\Delta_i(G) - n^{13/50}\log n}\\
&= \frac{1}{1 - \frac{ n^{13/50} \log n}{\Delta_i(G)}} \frac{\Delta_{i+1}(G_4)}{\Delta_i(G_4)}\\
&= \left( 1 + O\left(\frac{1}{n^{12/50}\log n}\right)\right) \left( 1 - O\left(\frac{1}{\log^2 n} \right)\right) \\
&=  \left( 1 - O\left(\frac{1}{\log^2 n} \right)\right) .
\end{align*}
Hence, 
\begin{align*}
\lambda_i(B_n) - \lambda_{i+1}(B_n) &= \sqrt{\Delta_i(G_4)} - \sqrt{\Delta_{i+1}(G_4)} \\
&= \sqrt{\Delta_i(G_4)} \left(1 - \sqrt{\frac{\Delta_{i+1}(G_4)}{\Delta_i(G_4)}}\right)\\
&= \sqrt{\Delta_i(G_4)} \left(1 - \sqrt{ 1 - O\left(\frac{1}{\log^2 n} \right)} \right)\\
&\geq \frac{n^{1/4}}{\log^{1/2} n}\cdot  O\left( \frac{1}{\log ^2 n}\right) \\ 
&\geq \frac{n^{1/4}}{\log^3 n}.
\end{align*}
\\
\\
\noindent \textbf{Proof of equation \eqref{walk count}: }Let $T$ be a tree. We wish to count the number of closed walks in $T$ which use each edge exactly twice (back and forth in the walk). Note that we can rewrite equation \eqref{walk count} as: 
$$
\sum_{v\in T} d(v, T) ! \prod_{w\neq v} (d(w, T) - 1)!
$$We claim that the number of closed walks in $T$ which use each edge exactly twice, starting and ending at $v$ is given by: $$d(v,T)! \prod_{w\neq v} (d(w, T) -1)!$$which follows easily from induction. 
\\
\\
\noindent \textbf{Reduction of $\mathcal T_k$ on the proof of Lemma \ref{unique}: }We want to show that it is enough to consider the case when $\mathcal T_k$ is the set of trees with exactly $k$ edges. If $T$ is a tree on $\tilde k$ many edges, where $\tilde k  < k$, then: 
$$
\mathcal M_{2\tilde k +2}(T) \leq (\Delta(T) (2\tilde k) ) \mathcal M_{2\tilde k}(T) 
$$to see this, note that given a closed walk of length $2\tilde k$, then we can choose a point during the walk to add an extra step. The number of places where the extra step goes is $2\tilde k$, and at each vertex we have at most $\Delta(T)$ many choices on which vertex to visit from there. Hence, 
$$
\mathcal M_{2k}(T) \leq (2k \Delta(T))^{k-\tilde k} \mathcal M_{2\tilde k}(T).
$$Note that calculation \eqref{final bound} can be carried the same with $k$ replaced by $\tilde k$. Hence, when we sum over the trees with exactly $\tilde k$ many edges we get an upper bound of: 
\begin{align*}
(2k \Delta(T))^{k-\tilde k}2\tilde k(2\tilde k(m+1))^{2\tilde k}&\leq (2k^2)^{k-\tilde k} 2 k (2k(m+1))^{2 \tilde k}\\
&\leq C^k k^{2k}
\end{align*}Hence, by summing over all $\tilde k < k$ we get an extra factor of $k$ to obtain: 
$$
C^k k^{2k+1}
$$which satisfies the required limit needed in equation \eqref{condition}.

%\bibliographystyle{siam}
%\bibliography{Dissertation}

\end{document}